\let\mathcal\mathscr
\theoremstyle{plain}
\numberwithin{equation}{section}
\newtheorem{prop}[equation]{\propname}
\newtheorem{theo}[equation]{\theoname}
\newtheorem{coro}[equation]{\coroname}
\newtheorem{lemm}[equation]{\lemmname}
\theoremstyle{definition}
\theoremstyle{remark}
\newtheorem{rema}[equation]{\remaname}
\def\paskunas{Pa\v{s}k\={u}nas}
\let\cal\mathcal
\let\goth\mathfrak
\def\Q{{\bf Q}} \def\Z{{\bf Z}}
\def\N{{\bf N}}
\def\O{{\cal O}}
\def\G{{\cal G}}
\def\dual{{\boldsymbol *}}
\def\bPi{{\boldsymbol\Pi}}
\def\Qbar{\overline{\bf Q}}
\def\epsilon{\varepsilon}
\def\piqp{{\bf P}^1}
\def\matrice#1#2#3#4{{\big(\begin{smallmatrix}#1&#2\\ #3&#4\end{smallmatrix}\big)}}
\def\wotimes{\,\widehat\otimes\,}
\newcommand{\gl}{{\rm GL}}
\def\invlim{\mathop{\vtop{\ialign{##\crcr$\hfill{\lim}\hfil$\crcr
\noalign{\kern1pt\nointerlineskip}\leftarrowfill\crcr\noalign
{\kern -3pt}}}}\limits}
\def\dirlim{\mathop{\vtop{\ialign{##\crcr$\hfill{\lim}\hfil$\crcr
\noalign{\kern1pt\nointerlineskip}\rightarrowfill\crcr\noalign
{\kern -3pt}}}}\limits}
\def\epsilon{\varepsilon}
\let\mathcal\mathscr
\numberwithin{equation}{section}
\begin{document}
\title[Anneaux de Kisin]
{Correspondance de Langlands locale $p$-adique et anneaux de Kisin}
\author{Pierre Colmez}
\address{CNRS, IMJ-PRG, Sorbonne Universit\'e, 4 place Jussieu, 75005 Paris, France}
\email{pierre.colmez@imj-prg.fr}
\author{Gabriel Dospinescu}
\address{CNRS, UMPA, \'Ecole Normale Sup\'erieure de Lyon, 46 all\'ee d'Italie, 69007 Lyon, France}
\email{gabriel.dospinescu@ens-lyon.fr}
\author{Wies{\l}awa Nizio{\l}}
\address{CNRS, IMJ-PRG, Sorbonne Universit\'e, 4 place Jussieu, 75005 Paris, France}
\email{wieslawa.niziol@imj-prg.fr}
 \date{\today}
\thanks{Les trois auteurs sont membres du
projet ANR-19-CE40-0015-02 COLOSS.}
\begin{abstract}
Nous utilisons un proc\'ed\'e de compl\'etion ${\cal B}$-adique
et la correspondance de Langlands locale $p$-adique pour $\gl_2(\Q_p)$
pour donner une construction
des anneaux de Kisin et des repr\'esentations galoisiennes
universelles associ\'ees (en dimension~$2$ et pour $\Q_p$) \`a partir de la correspondance
de Langlands locale classique.
Cela fournit, en particulier, une preuve uniforme 
de la conjecture de Breuil-M\'ezard (version g\'eom\'etrique)
dans le cas supercuspidal.
\end{abstract}
\begin{altabstract}
We use a ${\cal B}$-adic completion and the $p$-adic local Langlands correspondence for $\gl_2(\Q_p)$
to give a construction of Kisin's rings and the attached universal Galois representations
(in dimension $2$ and for $\Q_p$) directly from the classical Langlands correspondence.
This gives, in particular, a uniform proof of the geometric Breuil-M\'ezard conjecture in
the supercuspidal case.
\end{altabstract}

\setcounter{tocdepth}{1}

\maketitle

{\Small
\tableofcontents
}

\section*{Introduction}
Nous donnons, pour $\G_{\Q_p}$ (groupe de Galois absolu de $\Q_p$) et en dimension~$2$,
une construction directe des anneaux de Kisin et des repr\'esentations universelles attach\'ees.

Plus pr\'ecis\'ement, soit $L$ une extension finie de $\Q_p$ d'anneau des entiers $\O_L$ et de corps
r\'esiduel $k_L$, et soient $M$ un $(\varphi,N,\G_{\Q_p})$-module de rang $2$ irr\'eductible,
$a<b$ des entiers et $\overline\rho:\G_{\Q_p}\to \gl_2(k_L)$ 
une repr\'esentation continue, semi-simple.
Notre construction
 part de la repr\'esentation ${\rm LL}(M)$ de $G:=\gl_2(\Q_p)$ associ\'ee
\`a $M$ par la recette de Fontaine et la correspondance de Langlands locale classique (c'est une
repr\'esentation supercuspidale puisque $M$ est suppos\'e irr\'eductible).
On suppose que 
la repr\'esentation localement
alg\'ebrique
$${\rm LL}^{[a,b]}(M)={\rm LL}(M)\otimes {\rm Sym}^{b-a-1}\otimes{\det}^a$$
a un caract\`ere central unitaire. Elle admet alors des $\O_L$-r\'eseaux stables
par $G$; on fixe un tel r\'eseau $\Lambda$.
La repr\'esentation $\overline\rho$ d\'etermine un bloc ${\cal B}$ 
de la cat\'egorie des $\O_L$-repr\'esentations de $G$
lisses de longueur finie et de caract\`ere central \'egal \`a celui de ${\rm LL}^{[a,b]}(M)$.
On d\'efinit\footnote{Cette construction appara\^{\i}t d\'ej\`a dans~\cite[(1.6.4)]{Kis2}.}
{\it le compl\'et\'e ${\cal B}$-adique} $\Lambda_{\cal B}$ 
de $\Lambda$ comme la limite projective
des quotients de longueur finie de $\Lambda$ dont toutes les composantes de Jordan-H\"older appartiennent
\`a ${\cal B}$, et on pose ${\rm LL}_{\cal B}^{[a,b]}(M)=L\otimes_{\O_L}\Lambda_{\cal B}$ (le r\'esultat
est ind\'ependant du choix de $\Lambda$). 
Soient alors\footnote{
Rappelons que l'on dispose d'un foncteur $\Pi\mapsto{\bf V}(\Pi)$ associant une $L$-repr\'esentation
de $\G_{\Q_p}$ \`a une $L$-repr\'esentation unitaire $\Pi$ de $G$, et que la correspondance de Langlands
locale $p$-adique 
$V\mapsto\bPi(V)$ associe \`a une $L$-repr\'esentation de $\G_{\Q_p}$, de dimension~$2$,
une $L$-repr\'esentation de $G$ v\'erifiant ${\bf V}(\bPi(V))=V$.}:
$$R_{M,\overline\rho}^{[a,b]}:={\rm End}_G({\rm LL}_{\cal B}^{[a,b]}(M)),\quad
X_{M,\overline\rho}^{[a,b]}:={\rm Spec}\,R_{M,\overline\rho}^{[a,b]},\quad
\rho_M^{[a,b]}:={\bf V}({\rm LL}_{\cal B}^{[a,b]}(M))$$
Notre r\'esultat principal est que $\rho_M^{[a,b]}$ est la famille universelle
des repr\'esentations de $\G_{\Q_p}$ de type $(M,a,b)$ dont la r\'eduction\footnote{Par d\'efinition,
on prend la semi-simplifi\'ee de la r\'eduction d'un r\'eseau stable par $\G_{\Q_p}$.} est $\overline\rho$, 
(i.e. de dimension~$2$,
potentiellement semi-stables \`a poids de Hodge-Tate $a$ et $b$, et dont le $D_{\rm pst}$
est isomorphe \`a $M$).

\begin{theo} \label{petit1}
{\rm (i)} $R_{M,\overline\rho}^{[a,b]}$ est un anneau commutatif s'identifiant
\`a l'anneau des fonctions analytiques born\'ees sur un ouvert analytique de $\piqp$; 
alg\'ebriquement,
c'est le produit d'un nombre fini d'anneaux principaux.

{\rm (ii)} $\rho_M^{[a,b]}$ est libre de rang~$2$ sur $R_{M,\overline\rho}^{[a,b]}$ et
l'application de sp\'ecialisation $x\mapsto\rho_x$ induit, pour toute extension
finie $L'$ de $L$, une bijection entre
$X_{M,\overline\rho}^{[a,b]}(L')$ et l'ensemble des $L'$-repr\'esentations de $\G_{\Q_p}$
de r\'eduction $\overline\rho$ et de type $(M,a,b)$.
\end{theo}
Ce th\'eor\`eme est prouv\'e aux \S\S\,\ref{PETIT13} et~\ref{PETIT14} (les notations y sont un peu
diff\'erentes car on met l'accent sur le bloc ${\cal B}$ plut\^ot que la repr\'esentation
$\overline\rho$, et donc $R_{M,\overline\rho}^{[a,b]}$ devient $R_{M,{\cal B}}^{[a,b]}$
et $\rho_M^{[a,b]}$ devient $\rho_{M,{\cal B}}^{[a,b]}$).
La preuve repose sur deux ingr\'edients:

$\bullet$ Le {\og th\'eor\`eme $R=T$ \fg} local de {\paskunas}~\cite{Paskext,PT} qui montre que 
${\bf V}({\rm LL}_{\cal B}^{[a,b]}(M))$ est {\og de type $\gl_2$ \fg} et munit
ce module d'une action de l'anneau des d\'eformations universelles du pseudo-caract\`ere
${\rm Tr}\circ\overline\rho$.

$\bullet$ Une extension des r\'esultats de~\cite{D-dR} selon lesquels,
 si $V$ est irr\'eductible de dimension~$2$, de Rham
\`a poids de Hodge-Tate distincts, et si $E$ est une extension de $V$ par~$V$, alors $\bPi(V)$
est la compl\'et\'ee de ses vecteurs localement alg\'ebriques si et seulement si $E$ est de Rham.
Ces r\'esultats permet de montrer que $R_{M,\overline\rho}^{[a,b]}$ est un produit d'anneau principaux
et que $\rho_M^{[a,b]}$ est libre de rang~$2$ sur $R_{M,\overline\rho}^{[a,b]}$
(cf.~th.\,\ref{petit15} et cor.\,\ref{petit16}). 

\begin{rema}
(i) Pour $\overline\rho$ g\'en\'erique, l'existence de
$\rho_M^{[a,b]}$ peut se d\'eduire des r\'esultats de
Kisin~\cite{Kis} (cf.~aussi \cite[cor.\,1.4.7]{Kis2})
\`a part pour le fait que Kisin ne fixe pas $M$ mais seulement
sa restriction au sous-groupe d'inertie et son d\'eterminant ce qui donne a priori deux
$M$ possibles: $M$ et $M\otimes\mu_{-1}$ o\`u $\mu_{-1}$ est le caract\`ere non ramifi\'e d'ordre~$2$.
(Les r\'esultats de Kisin sont valables pour des repr\'esentations
de~$\G_K$, avec $[K:\Q_p]<\infty$, en dimension arbitraire, mais comme notre construction
utilise la correspondance de Langlands locale $p$-adique, nous sommes forc\'es de nous
restreindre aux repr\'esentations de dimension~$2$ de $\G_{\Q_p}$;
la g\'eom\'etrie de
l'analogue de l'espace $X_{M,{\cal B}}^{[a,b]}$ ci-dessus a \'et\'e \'etudi\'ee par Wang-Erickson~\cite{WE}.)

(ii) Pour prouver que $R_{M,\overline\rho}^{[a,b]}$ est un produit d'anneaux principaux,
on commence par prouver que cet anneau est lisse et qu'il s'identifie \`a l'anneau des fonctions
analytiques born\'ees sur un ouvert de $\piqp$, ce qui permet d'utiliser 
la classification standard 
des ouverts de $\piqp$ (voir les travaux de
Rozensztajn~\cite{Roz} pour des r\'esultats du m\^eme type).

(iii)
Les deux ingr\'edients ci-dessus ont aussi \'et\'e utilis\'es par {\paskunas}~\cite{Pas2} pour prouver
la conjecture de Breuil-M\'ezard (avec quelques restrictions sur $\overline\rho$ dont certaines
ont \'et\'e lev\'ees par Hu et Tan~\cite{HT}), ce qui n\'ecessite
des informations sur $R_{M,\overline\rho}^{[a,b]}$ un peu diff\'erentes de celles mentionn\'ees ci-dessus
(et il ne faut pas rendre $p$ inversible), cf.~\cite[th.\,6.6, th.\,6.24]{Pas2}.

(iv) On d\'eduit de la construction de ${\rm LL}_{\cal B}^{[a,b]}(M)$ et 
des r\'esultats des \S\S\,\ref{PETIT9} et~\ref{PETIT10} une suite de d\'ecomposition 
de la r\'eduction de $\rho_M^{[a,b]}$ qui fournit (prop.\,\ref{BM1} et~\ref{BM3}) une forme
de la conjecture de Breuil-M\'ezard~\cite{BM,EG}. 

(v) L'\'enonc\'e du th.\,\ref{petit1} et sa preuve sont purement locaux, mais il y a quand-m\^eme un
ingr\'edient global cach\'e, \`a savoir la compatibilit\'e local-global d'Emerton~\cite{eternalpreprint}
qui permet de s'assurer que $M$ et ${\rm LL}(M)$ se correspondent bien par la correspondance
de Langlands locale classique.

(vi) La motivation premi\`ere pour les r\'esultats de cet article \'etait la d\'efinition
des objets apparaissant dans la factorisation de la cohomologie \'etale $p$-adique
de la tour de Drinfeld~\cite[th.\,0.1]{CDN5}.
\end{rema}

\Subsection*{Notations}
On note:

\quad $\bullet$ $G$ le groupe $\gl_2(\Q_p)$, 

\quad $\bullet$ $B$ le borel des matrice triangulaires sup\'erieures,

\quad $\bullet$ $Z$ le centre de $G$,

\quad $\bullet$ $K:=\gl_2(\Z_p)$ le sous-groupe compact maximal.

\vskip1mm
On note:

\quad $\bullet$  $\mu_\lambda$ le caract\`ere non ramifi\'e de $\Q_p^\dual$ prenant la valeur $\lambda$ en $p$
(dans un anneau o\`u $\lambda$ est inversible),
i.e. $\mu_\lambda(x)=\lambda^{v_p(x)}$.

\quad $\bullet$ $\epsilon$ le caract\`ere $x\mapsto x|x|$ de $\Q_p^\dual$.

\quad $\bullet$ $\chi$ le caract\`ere $\chi\circ\det$ de $G$,
si $\chi$ est un caract\`ere de $\Q_p^\dual$.

\subsubsection*{Remerciements}
Cet article fait \'echo \`a des travaux de Kisin, {\paskunas}, Emerton et Gee; nous les
remercions de toutes les belles id\'ees qu'ils ont introduites dans le sujet.
Nous remercions aussi le rapporteur pour sa lecture attentive et ses remarques. 

\section{La correspondance de Langlands locale $p$-adique}\label{PETIT6}
\Subsection{Repr\'esentations irr\'eductibles}
Si $0\leq r\leq p-1$ et si $\chi:\Q_p^\dual\to k_L^\dual$ est un caract\`ere continu, soit 
$W_{r,\chi}$ le $KZ$-module $({\rm Sym}^rk_L^2)\otimes\chi$ (o\`u $\matrice{p}{0}{0}{p}$ agit
trivialement sur $k_L^2$), et soit $I_{r,\chi}$
son induite compacte de $KZ$ \`a $G$. Soit $T_{\rm BL}$ l'op\'erateur de Barthel-Livn\'e.
Il r\'esulte des travaux de Barthel-Livn\'e~\cite{BL0,BL} et Breuil~\cite{Breuil} que
${\rm End}_{k_L[G]}I_{r,\chi}=k_L[T]$, o\`u $T$ agit par $T_{\rm BL}$, et que,
si $P\in k_L[T]$ est irr\'eductible,
alors $$\Pi_{r,\chi,P}:=I_{r,\chi}/P$$
est irr\'eductible sauf si $r=0$ ou $p-1$ et $P=T\pm 1$
(bien s\^ur, si $\deg P\geq 2$, alors $\Pi_{r,\chi,P}$ n'est pas absolument irr\'eductible).
\begin{rema}\label{class3}
Soit $B(\chi\epsilon^{r+1}\mu_T,\chi\mu_{T^{-1}})$ la $k_L[T,T^{-1}]$-repr\'esentation
$$B(\chi\epsilon^{r+1}\mu_T,\chi\mu_{T^{-1}})
:={\rm Ind}_B^G(\chi\mu_{T^{-1}}\otimes\chi\epsilon^{r}\mu_T)$$
On d\'eduit des r\'esultats de Barthel-Livn\'e que, si $P\in k_L[T]$ est irr\'eductible
et n'admet pas $0$ pour racine, alors
$$B_{r,\chi,P}:=(k_L[T,T^{-1}]/P)\otimes_{k_L[T,T^{-1}]}B(\chi\epsilon^{r+1}\mu_T,\chi\mu_{T^{-1}})$$
a m\^eme semi-simplifi\'ee
que $\Pi_{r,\chi,P}$ et lui est isomorphe sauf si $r=p-1$ et $P=T-\lambda$, avec $\lambda=\pm1$,
o\`u $\Pi_{r,\chi,P}$ est une extension de $\chi\mu_\lambda$ par ${\rm St}\otimes\chi\mu_\lambda$ 
tandis que $B_{r,\chi,P}$ est
une extension de  ${\rm St}\otimes\chi\mu_\lambda$
par $\chi\mu_\lambda$.
\end{rema}

Si $k$ est une extension de $k_L$, notons ${\rm Irr}_kG$ l'ensemble des
$k$-repr\'esentations de $G$, lisses, admissibles et {\it absolument irr\'eductibles}.
Berger~\cite{berger} a prouv\'e qu'un \'el\'ement de ${\rm Irr}_{\overline{\bf F}_p}G$
admet un caract\`ere central, et les r\'esultats de~\cite{BL0,BL,Breuil} fournissent
la description suivante de~${\rm Irr}_{k}G$ 

\begin{prop}\label{class1}
Les objets de ${\rm Irr}_{k_L}G$ sont:

$\bullet$ les $\chi$, pour $\chi:\Q_p^\dual\to k_L^\dual$ caract\`ere lisse,

$\bullet$ les ${\rm St}\otimes\chi$, pour $\chi:\Q_p^\dual\to k_L^\dual$ caract\`ere lisse,

$\bullet$ les $B(\chi_1,\chi_2):={\rm Ind}_B^G\,\chi_2\otimes\chi_1\epsilon^{-1}$,
pour $\chi_1,\chi_2:\Q_p^\dual\to k_L^\dual$ caract\`eres lisses, 
avec $\chi_1\neq\epsilon\chi_2$,

$\bullet$ les $\Pi_{r,\chi,T}$, pour $0\leq r\leq p-1$ et $\chi:\Q_p^\dual\to k^\dual$ caract\`ere lisse.
\end{prop}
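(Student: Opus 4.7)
Le plan est de ramener toute $\Pi\in{\rm Irr}_{k_L}G$ \`a un quotient d'une induite compacte $I_{r,\chi}$ et d'exploiter ${\rm End}_G I_{r,\chi}=k_L[T]$ ainsi que la discussion pr\'ec\'edente des $\Pi_{r,\chi,P}$.

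On commencerait par fixer le caract\`ere central de $\Pi$ (qui existe d'apr\`es Berger \cite{berger}). Comme $\Pi$ est lisse et que le premier sous-groupe de congruence $K_1=1+pM_2(\Z_p)$ est pro-$p$, les invariants $\Pi^{K_1}$ sont non nuls et forment une repr\'esentation du groupe fini $K/K_1\simeq\gl_2({\bf F}_p)$ ; la classification des $k_L$-repr\'esentations irr\'eductibles de ce dernier y extrait un sous-objet isomorphe \`a un poids de Serre $W_{r,\chi}$ avec $0\leq r\leq p-1$. Par r\'eciprocit\'e de Frobenius, on obtient alors une surjection $G$-\'equivariante $I_{r,\chi}\twoheadrightarrow\Pi$, dont le noyau contient $(P)$ pour un certain $P\in k_L[T]$ irr\'eductible : on prend pour $P$ le polyn\^ome minimal d'une valeur propre de $T$ agissant sur le $k_L[T]$-module ${\rm Hom}_G(I_{r,\chi},\Pi)$, non nul et de dimension finie par admissibilit\'e.

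L'absolue irr\'eductibilit\'e de $\Pi$ force alors $\deg P=1$ : en effet, $\overline{k_L}\otimes_{k_L}\Pi_{r,\chi,P}$ se d\'ecompose en somme directe indic\'ee par les racines de $P$ (via le lemme chinois appliqu\'e \`a l'action du Hecke), ce qui contredit l'absolue irr\'eductibilit\'e d\`es que $\deg P\geq 2$. On a donc $P=T$ ou $P=T-\lambda$ avec $\lambda\in k_L^\dual$. Le cas $P=T$ fournit directement les supercuspidales $\Pi_{r,\chi,T}$ de la liste. Dans le cas $P=T-\lambda$ avec $\lambda\neq 0$, hors des exceptions $r\in\{0,p-1\}$ et $\lambda=\pm 1$ signal\'ees dans la rem.\,\ref{class3}, on identifie $\Pi$ \`a la principale s\'erie $B(\chi\epsilon^{r+1}\mu_\lambda,\chi\mu_{\lambda^{-1}})$, dont l'irr\'eductibilit\'e absolue \'equivaut \`a la condition $\chi_1\neq\epsilon\chi_2$ ; dans les cas exceptionnels, la suite de composition de longueur~$2$ de $\Pi_{r,\chi,P}$ fait appara\^{\i}tre comme constituants irr\'eductibles les caract\`eres $\chi\mu_{\pm 1}$ et les tordus ${\rm St}\otimes\chi\mu_{\pm 1}$ de la Steinberg. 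On recouvre ainsi les quatre familles annonc\'ees.

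Le point le plus d\'elicat sera la premi\`ere \'etape, \`a savoir la production d'un plongement $W_{r,\chi}\hookrightarrow\Pi|_{KZ}$ et le contr\^ole de son ambig\"uit\'e (plusieurs couples $(r,\chi)$ pouvant donner la m\^eme $\Pi$) ; une fois cette \'etape accomplie, tout le reste d\'ecoule des r\'esultats de Barthel-Livn\'e et Breuil d\'ej\`a invoqu\'es.
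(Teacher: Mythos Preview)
Your sketch is correct and is precisely the standard argument of Barthel--Livn\'e \cite{BL0,BL} and Breuil \cite{Breuil}: extract a Serre weight from $\Pi^{K_1}$, use Frobenius reciprocity to present $\Pi$ as a quotient of some $I_{r,\chi}$, exploit finite-dimensionality of ${\rm Hom}_G(I_{r,\chi},\Pi)$ to factor through $\Pi_{r,\chi,P}$ for an irreducible $P$, and then force $\deg P=1$ by absolute irreducibility. The paper itself gives no proof of this proposition; it simply records the statement as a consequence of \cite{BL0,BL,Breuil} (together with \cite{berger} for the central character), so your outline is exactly what lies behind the citation.

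One small imprecision worth fixing: when you write ``dont le noyau contient $(P)$'', the surjection whose kernel contains $P\cdot I_{r,\chi}$ is not necessarily the one you first produced, but rather a $P$-torsion element of the $k_L[T]$-module ${\rm Hom}_G(I_{r,\chi},\Pi)$ (still surjective since $\Pi$ is irreducible). Also, to embed $W_{r,\chi}$ as a $KZ$-module (and not just a $K$-module) you are implicitly using the central character to extend the action to $Z$; this is harmless but could be said explicitly.
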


\Subsection{Le foncteur $\Pi\mapsto{\bf V}(\Pi)$}
Soit ${\rm Tors}\,G$ la cat\'egorie des $\O_L[G]$-modules lisses, de longueur finie
(un tel module est tu\'e par $p^N$, si $N$ est sup\'erieur ou \'egal \`a la longueur).
Si $\delta:\Q_p^\dual\to \O_L^\dual$ est un caract\`ere continu, on note
${\rm Tors}^\delta\,G$ la sous-cat\'egorie des objets de caract\`ere central $\delta$.

Si $\Pi\in {\rm Tors}^\delta G$, on peut~\cite[th.\,IV.2.13, IV.2.14]{gl2}
associer \`a $\Pi$, de mani\`ere fonctorielle,
une repr\'esentation ${\bf V}(\Pi)$ de $\G_{\Q_p}$ {\og de type $\gl_2$\fg}
(cf.~th.\,\ref{pasku6} pour le sens \`a donner \`a cet \'enonc\'e: si on est en dimension~$2$,
l'op\'erateur $g+\delta\epsilon(g)g^{-1}$ qui y appara\^{\i}t est juste la trace de $g$).

Si $\Pi=\varprojlim \Pi_i$, o\`u les $\Pi_i$ sont de longueur finie, on pose
${\bf V}(\Pi)=\varprojlim{\bf V}(\Pi_i)$, et si $\Pi=L\otimes_{\O_L}\Pi^+$, on
pose ${\bf V}(\Pi)=L\otimes_{\O_L}{\bf V}(\Pi^+)$, ce qui permet de d\'efinir
${\bf V}(\Pi)$ pour des banachs dont la r\'eduction est de longueur finie (ou m\^eme
juste limite projective d'objets de longueur finie).

\begin{prop}
{\rm(\cite[th.\,0.10]{gl2} ou~\cite[th.\,1.2.4]{Kis2})}

{\rm (i)} ${\bf V}(\Pi)=0$ si $\Pi$ est de dimension~$1$.

{\rm (ii)} ${\bf V}(\Pi_{r,\chi,P})=\epsilon^{r+1}\mu_\lambda\chi$, si $P(\lambda)=0$ et $\lambda\neq 0$.

{\rm (iii)} ${\bf V}(\Pi_{r,\chi,T})={\rm Ind}_{\G_{\Q_{p^2}}}^{\G_{\Q_p}}\omega_2^{r+1}\otimes\chi$.
\end{prop}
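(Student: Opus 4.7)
The plan is to compute ${\bf V}(\Pi)$ on each irreducible representation from Proposition~\ref{class1} case by case, exploiting the construction of ${\bf V}$ as Colmez's Montreal functor, which produces an étale $(\varphi,\Gamma)$-module from the dual of a smooth representation equipped with an appropriate monoid action.

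For (i), I would observe that a one-dimensional $\Pi=\chi\circ\det$ has the upper unipotent acting trivially, so the combinatorial data feeding the Colmez construction degenerates. A short direct computation then shows that the associated $(\varphi,\Gamma)$-module is zero, hence ${\bf V}(\Pi)=0$.

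For (ii), the route goes through Remark~\ref{class3}: when $\lambda\neq 0$, $\Pi_{r,\chi,P}$ either is isomorphic to, or has the same Jordan-H\"older content as, the specialization $B_{r,\chi,P}$ of the parabolic induction $B(\chi\epsilon^{r+1}\mu_T,\chi\mu_{T^{-1}})$. Using that ${\bf V}$ is exact and sends the principal series $B(\chi_1,\chi_2)={\rm Ind}_B^G(\chi_2\otimes\chi_1\epsilon^{-1})$ to $\chi_1$ (a Mahler-transform computation on the induced model), specializing $T$ to $\lambda$ in the first character yields ${\bf V}(\Pi_{r,\chi,P})=\epsilon^{r+1}\mu_\lambda\chi$. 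In the exceptional case $r=p-1$, $\lambda=\pm 1$ where $\Pi_{r,\chi,P}$ and $B_{r,\chi,P}$ sit as opposite non-split extensions of ${\rm St}\otimes\chi\mu_\lambda$ and $\chi\mu_\lambda$, both have the same semi-simplification, and since ${\bf V}$ kills the character factor by (i), one reads off the same answer from either extension.

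The main obstacle is (iii), where $\Pi_{r,\chi,T}$ is supercuspidal and ${\bf V}(\Pi_{r,\chi,T})$ has to be identified with an explicit irreducible two-dimensional representation of $\G_{\Q_p}$. Here I would compute the associated $(\varphi,\Gamma)$-module by hand starting from the compact induction $I_{r,\chi}$ modulo the Hecke operator $T_{\rm BL}$: choose a generating function supported on $KZ$, follow its orbit under the lower unipotent $\matrice{1}{0}{\Q_p}{1}$, and use the relation $T_{\rm BL}=0$ to truncate the resulting expression into a rank-two étale $(\varphi,\Gamma)$-module. One then identifies this module with the one attached to ${\rm Ind}_{\G_{\Q_{p^2}}}^{\G_{\Q_p}}\omega_2^{r+1}\otimes\chi$ by matching the determinant (which is forced by the central character) and the action of inertia (which should come out via the fundamental character of level two as $\omega_2^{r+1}$ and its $p$-th power), together with irreducibility, which rules out a split principal series. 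The technical heart lies in controlling the interaction between the Hecke relation $T_{\rm BL}=0$ and the $\Gamma$-action on the unipotent orbits inside $I_{r,\chi}$; this is essentially the computation carried out in~\cite{gl2} and is the step where the proposal stops being routine.
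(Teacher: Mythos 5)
The paper does not actually prove this proposition: it is imported wholesale from \cite[th.\,0.10]{gl2} (see also \cite[th.\,1.2.4]{Kis2}), so there is no internal argument to compare yours against. Your outline is a faithful pr\'ecis of the proof in those references. Points (i) and (ii) are handled correctly: exactness of ${\bf V}$, its vanishing on finite-dimensional representations, and the computation ${\bf V}(B(\chi_1,\chi_2))=\chi_1$ on principal series (your normalization is the one forced by consistency of (ii) with rem.\,\ref{pasku3}) reduce everything to the specialization $T\mapsto\lambda$ via rem.\,\ref{class3}, and your treatment of the exceptional case $r=p-1$, $\lambda=\pm1$ --- where $\Pi_{r,\chi,P}$ and $B_{r,\chi,P}$ are opposite non-split extensions but ${\bf V}$ only sees the Steinberg factor --- is exactly right. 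Point (iii) is where all the substance lies, and there your proposal is a statement of intent rather than a proof: the explicit construction of the rank-two \'etale $(\varphi,\Gamma)$-module out of $I_{r,\chi}/T_{\rm BL}$ is a long computation occupying a substantial part of \cite{gl2}. Your strategy for identifying the output with ${\rm Ind}_{\G_{\Q_{p^2}}}^{\G_{\Q_p}}\omega_2^{r+1}\otimes\chi$ (determinant, inertia acting through the level-two fundamental character and its $p$-th power, irreducibility) is viable, because an irreducible two-dimensional mod~$p$ representation of $\G_{\Q_p}$ is induced from $\G_{\Q_{p^2}}$ and is determined by its restriction to inertia together with its determinant (twisting such an induction by $\mu_{-1}$ changes nothing); but since none of the computation is carried out, (iii) should be regarded as cited rather than proved --- which is, in fairness, exactly what the paper itself does.
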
 

\Subsection{Blocs}\label{qc3}

La th\'eorie de Gabriel~\cite{gaby} fournit des d\'ecompositions
$${\rm Tors}\,G=\prod\nolimits_{\cal B}{\rm Tors}_{\cal B}\,G,\quad
{\rm Tors}^\delta\,G=\prod\nolimits_{\cal B}{\rm Tors}^\delta_{\cal B}\,G$$
o\`u ${\cal B}$ parcourt l'ensemble des 
blocs\footnote{On met, sur les classes d'isomorphisme de repr\'esentations irr\'eductibles,
 une relation d'\'equivalence d\'efinie par
$\pi\sim\pi'$ si et seulement si il existe une suite $\pi=\pi_0,\pi_1,\dots,\pi_r=\pi'$ telle que
$\pi_{i+1}=\pi_i$ ou ${\rm Ext}^1(\pi_i,\pi_{i+1})\neq 0$ ou ${\rm Ext}^1(\pi_{i+1},\pi_i)\neq 0$
(ces conditions ne sont pas exclusives).
Une classe d'\'equivalence est {\it un bloc}.} de ${\rm Tors}\,G$ (resp.~${\rm Tors}^\delta\,G$), et
${\rm Tors}_{\cal B}\,G$ (resp.~${\rm Tors}_{\cal B}^\delta\,G$) 
est la sous-cat\'egorie de ${\rm Tors}\, G$ (resp.~${\rm Tors}^\delta\,G$)
des objets dont toutes les composantes de Jordan-H\"older appartiennent \`a ${\cal B}$.
(Les blocs de ${\rm Tors}^\delta\,G$ sont ceux de ${\rm Tors}\,G$ de caract\`ere central $\delta$.)
Les composantes de Jordan-H\"older de $\Pi_{r,\chi,P}$ appartiennent
toutes \`a un m\^eme bloc. 

On dit qu'un bloc de ${\rm Tors}\,G$ ou ${\rm Tors}^\delta\,G$ est {\it absolu} s'il est
constitu\'e de repr\'esentations absolument irr\'eductibles (si ${\cal B}$ n'est pas absolu,
alors ${\cal B}$ peut \^etre consid\'er\'e comme
 absolu sur une extension non ramifi\'ee $L({\cal B})$ de $L$,
cf.~rem.\,\ref{absolu}).

\begin{prop}\label{pasku0}
{\rm (\paskunas)}
 Les blocs absolus de ${\rm Tors}\,G$ sont d'une des formes suivantes:

\quad $\bullet$ $\{\Pi_{r,\chi,T}\}$.

\quad $\bullet$ $\{B(\chi_1,\chi_2),B(\chi_2,\chi_1)\}$, o\`u
$\chi_1\chi_2^{-1}\neq \epsilon^{\pm}, 1$.

\quad $\bullet$ $\{B(\chi,\chi)\}$, si $p\neq 2$
{\rm(}devient $\{\chi, {\rm St}\otimes\chi\}$, si $p=2${\rm )}.

\quad $\bullet$ $\{\chi, {\rm St}\otimes\chi, B(\chi,\chi\epsilon)\}$, si $p\neq 3$
{\rm(}devient
$\{\chi, {\rm St}\otimes\chi, \chi\epsilon, {\rm St}\otimes\chi\epsilon\}$, si $p=3${\rm )}.
\end{prop}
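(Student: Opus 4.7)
The plan is to compute ${\rm Ext}^1_{{\rm Tors}\,G}(\pi,\pi')$ for every pair $(\pi,\pi')$ of absolutely irreducible objects of ${\rm Tors}\,G$, using the classification of prop.\,\ref{class1}, and then to read off the blocks as the connected components of the graph obtained by linking $\pi$ to $\pi'$ whenever either ${\rm Ext}^1(\pi,\pi')$ or ${\rm Ext}^1(\pi',\pi)$ is non-zero. Since every object of ${\rm Tors}\,G$ has finite length and admits a central character, this description of the blocks coincides with the one given in the footnote of \S\,\ref{qc3}.

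First I would isolate the supersingulars $\Pi_{r,\chi,T}$. The key input is Colmez's functor $\Pi\mapsto{\bf V}(\Pi)$, which is exact on ${\rm Tors}\,G$ and which, by the preceding proposition, sends $\Pi_{r,\chi,T}$ to an absolutely irreducible two-dimensional $\G_{\Q_p}$-representation while sending every character, Steinberg twist, or principal series either to a character or to zero. Any non-trivial ${\rm Ext}^1$ between a $\Pi_{r,\chi,T}$ and an irreducible of a different type would then produce, after applying ${\bf V}$, an extension of incompatible Galois representations; a direct computation of ${\rm Ext}^1$ groups of irreducible $\G_{\Q_p}$-representations rules this out, and also shows that ${\rm Ext}^1(\Pi_{r,\chi,T},\Pi_{r',\chi',T})$ vanishes outside the diagonal. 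This yields the first family of blocks $\{\Pi_{r,\chi,T}\}$.

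Second I would handle the non-supersingular irreducibles. The basic tools are the short exact sequences
\[
0\to\chi\to B(\chi,\chi\epsilon)\to {\rm St}\otimes\chi\to 0,\qquad 0\to {\rm St}\otimes\chi\to B(\chi\epsilon,\chi)\to\chi\to 0,
\]
which provide non-zero classes linking $\chi$, ${\rm St}\otimes\chi$, and $B(\chi,\chi\epsilon)$; the intertwining between $B(\chi_1,\chi_2)$ and $B(\chi_2,\chi_1)$, which for $\chi_1\chi_2^{-1}\neq 1,\epsilon^{\pm}$ gives rise to a non-split extension and glues these two principal series into a single block; and the fact that $B(\chi,\chi)$, irreducible for $p\neq 2$, has no ${\rm Ext}^1$ with anything outside itself. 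A dimension count of the relevant ${\rm Ext}^1$ groups (using central characters and the Jacquet module, or equivalently the explicit injective envelopes of $\chi$ and ${\rm St}\otimes\chi$ in ${\rm Tors}_{\cal B}\,G$) then rules out any further link between distinct blocks of this list.

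Third, the exceptional primes $p=2,3$ must be treated separately. For $p=2$ the character $\epsilon$ has order two, the induced $B(\chi,\chi)$ ceases to be irreducible, and its Jordan-H\"older constituents $\chi$ and ${\rm St}\otimes\chi$ absorb what would otherwise be a singleton block. For $p=3$ an extra ${\rm Ext}^1$-class appears between $\chi\epsilon$ (resp.~${\rm St}\otimes\chi\epsilon$) and an object of the generic three-element block, forcing $\chi\epsilon$ and ${\rm St}\otimes\chi\epsilon$ to join that block. These modifications are dictated by coincidences among $1,\epsilon,\epsilon^{-1}$ that fail for $p\geq 5$. The main obstacle in the argument is precisely this exceptional-prime bookkeeping: one must verify by explicit calculation, via \paskunas' description of the injective envelopes in each block of ${\rm Tors}_{\cal B}\,G$, that the extra links appearing for $p=2,3$ are exactly those listed in the statement and no more.
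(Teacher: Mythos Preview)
The paper does not give a proof of this proposition: it is stated with the attribution ``(\paskunas)'' and the argument lives entirely in the cited references~\cite{Paskext,PT}. So there is no ``paper's own proof'' to compare against, only the original computation by \paskunas.

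That said, your proposed strategy for the supersingular case has a real gap. Exactness of $\Pi\mapsto{\bf V}(\Pi)$ tells you that a non-split extension $0\to\pi\to E\to\pi'\to 0$ in ${\rm Tors}^\delta G$ yields an exact sequence $0\to{\bf V}(\pi)\to{\bf V}(E)\to{\bf V}(\pi')\to 0$ of Galois modules, but it does \emph{not} tell you that the resulting Galois extension is non-split. In other words, exactness gives a map ${\rm Ext}^1_G(\pi',\pi)\to{\rm Ext}^1_{\G_{\Q_p}}({\bf V}(\pi'),{\bf V}(\pi))$, not an injection. Knowing that this map is injective is essentially equivalent to the content of \paskunas' theorem (th.\,\ref{pasku6}) and is strictly deeper than the block decomposition itself; using it here would be circular. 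Moreover, your fallback claim that the relevant Galois ${\rm Ext}^1$ groups vanish is false: if $V$ is irreducible of dimension~$2$ and $\chi$ is a character, Tate local duality gives $\dim_{k_L}H^1(\G_{\Q_p},V\otimes\chi^{-1})=2$, so there is no obstruction on the Galois side.

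The actual argument in~\cite{Paskext} is purely on the automorphic side: one computes ${\rm Ext}^1$ between irreducibles directly using Emerton's ordinary parts functor, Frobenius reciprocity for compact induction, and the explicit structure of injective envelopes, with no appeal to ${\bf V}$. Your outline for the non-supersingular blocks is closer in spirit to what is done there, though the phrase ``intertwining between $B(\chi_1,\chi_2)$ and $B(\chi_2,\chi_1)$'' is misleading (there is no intertwiner in general; the link is again a genuine ${\rm Ext}^1$ computation), and for $p=2$ the character $\epsilon$ is trivial rather than of order two.
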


Si $\pi$ est irr\'eductible, on note $P_\pi$ (resp.~$P_\pi^\delta$)
le dual de Pontryagin d'une enveloppe injective de 
$\pi$ dans la cat\'egorie des $\O_L[G]$-modules lisses qui sont la r\'eunion de leurs sous 
$\O_L[G]$-modules de longueur finie (resp. de caract\`ere central $\delta$). Le caract\`ere central de 
$P_\pi^\delta$ est donc $\delta^{-1}$, et 
 $P_\pi$ est un $\O_L[G]$-module compact, limite projective de $\O_L[G]$-modules compacts et
de longueur finie.

Si ${\cal B}$ est un bloc de ${\rm Tors}\,G$ ou de ${\rm Tors}^\delta\,G$, on pose
\begin{align*}
&P_{\cal B}=\oplus_{\pi\in {\cal B}} P_\pi
&&E_{\cal B}:={\rm End}_G(P_{\cal B})&& Z_{\cal B}:={\text{centre de $E_{\cal B}$}}\\
&P_{\cal B}^\delta=\oplus_{\pi\in {\cal B}} P_\pi^\delta
&&E_{\cal B}^\delta:={\rm End}_G(P_{\cal B}^\delta)
&& Z_{\cal B}^\delta:={\text{centre de $E_{\cal B}^\delta$}}
\end{align*}
Alors on dispose d'un morphisme naturel $Z_{\cal B}\to Z_{\cal B}^\delta$.

\begin{rema}\label{pasku1}
L'anneau $Z_{\cal B}^\delta$ s'identifie, 
par les r\'esultats de Gabriel \cite{gaby}, au centre de la cat\'egorie 
${\rm Tors}^\delta_{\cal B}\, G$; il agit donc naturellement sur tout 
 $\pi\in {\rm Tors}^\delta_{\cal B}\, G$ ainsi que, par fonctorialit\'e, sur ${\bf V}(\pi)$. 
L'action sur $\pi$ peut s'expliciter en utilisant la description 
 $$\pi=(P_{\cal B}\otimes_{E_{\cal B}}{\rm Hom}(P_{\cal B},\pi^\vee))^\vee$$
 o\`u $^\vee$ d\'esigne
le dual de Pontryagin.
\end{rema}

\Subsection{Blocs et repr\'esentations galoisiennes modulo~$p$}\label{qc10}
On associe \`a un bloc absolu ${\cal B}$ de ${\rm Tors}\, G$ la $k_L$-repr\'esentation 
$\rho_{\cal B}$ de $\G_{\Q_p}$ semi-simple, de dimension~$2$, dont les composantes
irr\'eductibles sont les ${\bf V}(\pi)$, pour $\pi\in{\cal B}$.
On obtient de la sorte une bijection entre les blocs
absolus de ${\rm Tors}\, G$ et les $k_L$-repr\'esentations   
$\rho_{\cal B}$ de $\G_{\Q_p}$ semi-simples, de dimension~$2$.
\begin{rema}\label{pasku3}
$\rho_{\cal B}$ v\'erifie les propri\'et\'es suivantes:

$\bullet$ $\rho_{\cal B}$ est irr\'eductible, de dimension~$2$, 
si ${\cal B}=\{\Pi_{r,\chi,T}\}$.

$\bullet$ $\rho_{\cal B}=\chi_1\oplus\chi_2$, 
si ${\cal B}=\{B(\chi_1,\chi_2),B(\chi_2,\chi_1)\}$ et $\chi_1\neq\chi_2$.

$\bullet$ $\rho_{\cal B}=\chi\oplus\chi$, si ${\cal B}=\{B(\chi,\chi)\}$. 

$\bullet$ $\rho_{\cal B}=\chi\oplus\chi\epsilon$, si ${\cal B}=\{\chi,{\rm St}\otimes\chi,B(\chi,\chi\epsilon)\}$.

Si ${\cal B}$ est un bloc de ${\rm Tors}^\delta_G$, alors $(\det\rho_{\cal B})\epsilon^{-1}=\delta$.
\end{rema}

Soit ${\cal B}$ un bloc absolu de ${\rm Tors}^\delta\,G$. Notons
$R_{\cal B}^{{\rm ps},\delta}$ l'anneau des d\'eformations universelles
du pseudo-caract\`ere ${\rm Tr}\circ\rho_{\cal B}$, de dimension~$2$ et
de d\'eterminant $\delta\epsilon$, et 
$$T_{\cal B}^\delta:\G_{\Q_p}\to R_{\cal B}^{{\rm ps},\delta}$$
 le pseudo-caract\`ere universel.
On a alors le {\og th\'eor\`eme $R=T$\fg} local suivant (cf.~rem.\,\ref{pasku1} pour l'action
de $Z_{\cal B}^\delta$).

\begin{theo}\label{pasku6}
{\rm (\paskunas~\cite{Paskext}, {\paskunas}-Tung~\cite{PT})}
Il existe un unique morphisme d'anneaux
$$\iota_{\cal B}^\delta:R_{\cal B}^{{\rm ps},\delta}\to Z_{\cal B}^\delta$$
 tel que, pour tout $\pi\in {\rm Tors}^\delta_{\cal B} G$ et tout $g\in{\rm Gal}_{\Q_p}$,
on ait:
$$\iota_{\cal B}^\delta(T_{\cal B}^\delta(g))=g+\delta\epsilon(g)g^{-1}
\quad{\text{dans ${\rm End}({\bf V}(\pi))$}}.$$
De plus:

$\bullet$ Si $p\geq 5$, $\iota_{\cal B}^\delta$ est un isomorphisme.

$\bullet$ Dans le cas g\'en\'eral $\iota_{\cal B}^\delta$ est un morphisme fini 
{\rm (i.e. $Z_{\cal B}^\delta$ est un 
$R_{\cal B}^{{\rm ps}}$-module de type fini)}, $L\otimes_{\O_L}\iota_{\cal B}^\delta$ est un isomorphisme,
$Z_{\cal B}^\delta=R_{\cal B}^{{\rm ps},\delta}/\O_L{\text{-{\rm torsion}}}$ si $p=3$ et le conoyau
de $R_{\cal B}^{{\rm ps},\delta}/\O_L{\text{-{\rm torsion}}}\to Z_{\cal B}^\delta$ est tu\'e par $2$ si $p=2$.

$\bullet$ Le $Z_{\cal B}^\delta$-module $E_{\cal B}$ est de type fini.
\end{theo}
\begin{rema}
Les anneaux de d\'eformations universelles de pseudo-caract\`eres de $\G_{\Q_p}$ sont
noeth\'eriens (et m\^emes quotients de $\O_L[[x_1,\dots,x_r]]$); on en d\'eduit que
$Z_{\cal B}^\delta[1/p]$ est noeth\'erien. En fait $Z_{\cal B}^\delta$ est noeth\'erien \cite[th 1.1]{PT}. 
\end{rema}

\begin{rema}\label{absolu}
Tout ce qui pr\'ec\`ede suppose que ${\cal B}$ est absolu mais 
on peut consid\'erer tout bloc comme absolu, quitte \`a remplacer $L$ par une
certaine extension non ramifi\'ee $L({\cal B})$.  En effet:

{\rm (i)} Si $\pi$ est irr\'eductible comme $k_L[G]$-module,
 alors l'anneau $k(\pi):={\rm End}_{k[G]}\pi$ est une extension finie
de $k$, et $\pi$ est absolument irr\'eductible vue comme $k(\pi)[G]$-module.

{\rm (ii)} Si ${\cal B}$ est un bloc de ${\rm Tors}\,G$, 
alors $k(\pi)$ ne d\'epend pas de $\pi\in{\cal B}$; notons le
$k({\cal B})$, et notons $L({\cal B})$ l'extension non ramifi\'ee de $L$ de corps
r\'esiduel $k({\cal B})$. 

{\rm (iii)} Tout \'el\'ement de ${\rm Tors}_{\cal B}G$
est muni d'une action de $\O_{L({\cal B})}$ relevant celle de $k({\cal B)}$,
ce qui permet de consid\'erer ${\cal B}$ comme un bloc absolu de ${\rm Tors}_{\O_{L({\cal B})}}G$.
De plus, l'oubli de l'action de $\O_{L({\cal B})}$ induit une \'equivalence entre
${\rm Tors}_{\O_{L({\cal B})},{\cal B}}G$ et ${\rm Tors}_{\cal B}G$.

{\rm (iv)} Si $\Pi\in {\rm Tors}_{\cal B}G$, alors
$\O_{L({\cal B})}\otimes_{\O_L}\Pi$ se decompose sous la forme $\oplus_{\sigma}\Pi^\sigma$,
o\`u $\sigma$ d\'ecrit ${\rm Hom}_L(L({\cal B}),L({\cal B}))$. 
Cela d\'ecoupe $\O_{L({\cal B})}\otimes_{\O_L}{\cal B}$
en blocs absolus ${\cal B}^\sigma$ qui sont distincts mais conjugu\'es sous l'action
de ${\rm Gal}(k({\cal B})/k_L)$.
\end{rema}

\section{Compl\'etions profinie et ${\cal B}$-adique}\label{PETIT7}
\Subsection{Compl\'etion profinie}\label{PETIT8}
Si $X$ est un $\O_L[G]$-module topologique, on note $\widetilde X$ son {\it compl\'et\'e profini},
i.e.~la limite projective des $X/W$,
o\`u $W$ parcourt l'ensemble des sous-$\O_L[G]$-modules
ouverts (et donc aussi ferm\'es) de $X$ tels que $X/W\in {\rm Tors}\,G$.
Alors $\widetilde{X}$ admet une d\'ecomposition
$$\widetilde{X}=\prod\nolimits_{\cal B}{X}_{\cal B}$$
o\`u ${\cal B}$ d\'ecrit
l'ensemble des blocs des repr\'esentations de $G$ sur $k_L$;
${X}_{\cal B}$
est le {\it compl\'et\'e ${\cal B}$-adique} de $X$.
Si $X$ a pour caract\`ere central $\delta$, les blocs apparaissant dans le produit sont ceux
de ${\rm Tors}^\delta\,G$.
\begin{rema}\label{petit5}
On n'est pas forc\'e de prendre un bloc pour d\'efinir un compl\'et\'e ${\cal B}$-adique: 
on peut prendre pour ${\cal B}$ un sous-ensemble d'un bloc et ne consid\'erer
que les quotients dont les composantes de Jordan-H\"older appartiennent \`a ${\cal B}$. 
\end{rema}

\begin{prop}\label{GAB1}
Si $X$ est une repr\'esentation lisse de $G$, de type fini, \`a caract\`ere central $\delta$, on dispose d'un isomorphisme naturel de $G$-modules ind-profinis 
 $$X_{\cal B}^{\vee}\simeq P^{\delta}_{\cal B}\otimes_{E^{\delta}_{\cal B}} {\rm Hom}_G(P^{\delta}_{\cal B}, X^{\vee}).$$
\end{prop}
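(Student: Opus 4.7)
The plan is to prove the proposition by a Morita-theoretic argument resting on the fact that $P^{\delta}_{\cal B}$ is a projective generator of the ``dual block category''---namely the category of compact $\O_L[G]$-modules of central character $\delta^{-1}$ obtained by Pontryagin duality from the locally finite category $\varinjlim {\rm Tors}^{\delta}_{\cal B} G$.

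First I would observe that it suffices to restrict attention to the ${\cal B}$-component of $X^{\vee}$: since $P^{\delta}_{\cal B}$ lies entirely in the dual block associated to ${\cal B}$, every $G$-equivariant map $P^{\delta}_{\cal B} \to X^{\vee}$ factors through that component, which Pontryagin duality identifies with $(X_{\cal B})^{\vee}$; hence ${\rm Hom}_G(P^{\delta}_{\cal B}, X^{\vee}) = {\rm Hom}_G(P^{\delta}_{\cal B}, (X_{\cal B})^{\vee})$. Next I would reduce to the finite length case by writing $X_{\cal B} = \varprojlim_W X/W$, where $W$ runs over the open $G$-submodules of $X$ with $X/W \in {\rm Tors}^{\delta}_{\cal B} G$, so that $(X_{\cal B})^{\vee} = \varinjlim_W (X/W)^{\vee}$. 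Both functors $P^{\delta}_{\cal B} \otimes_{E^{\delta}_{\cal B}} -$ and ${\rm Hom}_G(P^{\delta}_{\cal B}, -)$ commute with this filtered colimit---the former because tensor product always does, the latter because any map from $P^{\delta}_{\cal B}$ into a finite length object factors through its finite length image.

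It then remains to verify, for $\pi = X/W \in {\rm Tors}^{\delta}_{\cal B} G$, that the natural evaluation map
\[
P^{\delta}_{\cal B} \otimes_{E^{\delta}_{\cal B}} {\rm Hom}_G(P^{\delta}_{\cal B}, \pi^{\vee}) \longrightarrow \pi^{\vee}
\]
is an isomorphism. This is exactly the Pontryagin dual of the description $\pi = (P_{\cal B} \otimes_{E_{\cal B}} {\rm Hom}(P_{\cal B}, \pi^{\vee}))^{\vee}$ recalled in Remark \ref{pasku1}, specialised to central character $\delta$ (which replaces $P_{\cal B}$ and $E_{\cal B}$ by $P^{\delta}_{\cal B}$ and $E^{\delta}_{\cal B}$, respectively); at a more abstract level it is the standard statement that, for a compact projective generator $P$ of a Grothendieck category of compact objects, the adjunction counit $P \otimes_{{\rm End}(P)} {\rm Hom}(P, -) \to {\rm id}$ is an isomorphism. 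Assembling the three reductions, the right hand side of the proposition becomes $\varinjlim_W (X/W)^{\vee} = (X_{\cal B})^{\vee}$, which is the left hand side; all identifications are manifestly $G$-equivariant and respect the ind-profinite topologies.

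The principal subtlety lies in the compatibility with filtered colimits carried out in the second step, and in the transposition of Gabriel's abstract Morita-type equivalence into the topological setting of compact $G$-modules of fixed central character. Once those two points are granted, the argument becomes essentially formal.
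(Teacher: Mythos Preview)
Your outline follows the same strategy as the paper: reduce to the finite-length case via the description $(X_{\cal B})^{\vee}=\varinjlim_i \sigma_i^{\vee}$, and then invoke the Gabriel--Morita equivalence termwise. The point you flag as the ``principal subtlety'' is indeed the entire content of the proof, and the justifications you give for it do not stand up.

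In your first step you appeal to a ``${\cal B}$-component of $X^{\vee}$''. No such decomposition exists: $X$ is merely finitely generated and smooth, not locally of finite length (think of $X=I_{r,\chi}$, which has finite-length quotients in \emph{every} block), so neither $X$ nor $X^{\vee}$ carries a block decomposition. The submodule $(X_{\cal B})^{\vee}=\bigcup_i\sigma_i^{\vee}\subset X^{\vee}$ is there, but the assertion that every map from $P^{\delta}_{\cal B}$ lands in it is precisely what must be proved. In your second step the phrase ``any map from $P^{\delta}_{\cal B}$ into a finite length object factors through its finite length image'' is a tautology and does not explain why ${\rm Hom}_G(P^{\delta}_{\cal B},-)$ should commute with the colimit.

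The paper dispatches both issues with a single argument, which is the part you are missing. Given a continuous $G$-map $f:P^{\delta}_{\cal B}\to X^{\vee}$, its image is closed (because $P^{\delta}_{\cal B}$ is compact) and $G$-stable; dualising, the image corresponds to a quotient $\sigma$ of $X$. Now $\sigma$ is finitely generated (as a quotient of $X$) and embeds into $(P^{\delta}_{\cal B})^{\vee}$ (since $\sigma^{\vee}$ is a quotient of $P^{\delta}_{\cal B}$), which is a union of finite-length submodules; hence $\sigma$ itself has finite length, and automatically lies in ${\rm Tors}^{\delta}_{\cal B}G$. Thus $f$ factors through some $\sigma_i^{\vee}$. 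This simultaneously establishes your steps 1 and 2; once you have it, the rest of your outline goes through exactly as in the paper.
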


\begin{proof} Soit $(\sigma_i)_{i\in I}$ l'ensemble (d\'enombrable) des quotients de $X$ qui appartiennent \`a ${\rm Tors}^{\delta}_{\cal B}$. On a donc $X_{\cal B}^{\vee}\simeq \varinjlim_{i\in I} \sigma_i^{\vee}$ et pour tout $i$ la fl\`eche naturelle 
$P^{\delta}_{\cal B}\otimes_{E^{\delta}_{\cal B}} {\rm Hom}_G(P^{\delta}_{\cal B}, \sigma_i^{\vee})\to \sigma_i^{\vee}$ est un isomorphisme, d'o\`u un isomorphisme
$$X_{\cal B}^{\vee}\simeq P^{\delta}_{\cal B}\otimes_{E^{\delta}_{\cal B}} (\varinjlim_{i\in I} {\rm Hom}_G(P^{\delta}_{\cal B}, \sigma_i^{\vee}))).$$
 Il suffit donc de montrer que l'injection $\varinjlim_{i\in I} {\rm Hom}_G(P^{\delta}_{\cal B}, \sigma_i^{\vee})
\to {\rm Hom}_G(P^{\delta}_{\cal B}, X^{\vee})$ (induite par 
 les injections $\sigma_i^{\vee}\to X^{\vee}$) est une surjection. Soit donc $f: P^{\delta}_{\cal B}\to X^{\vee}$ un morphisme continu
 $G$-\'equivariant. Son image est ferm\'ee car $P^{\delta}_{\cal B}$ est compact,
et elle est $G$-stable dans $X^{\vee}$, donc son dual de Pontryagin correspond \`a un quotient $\sigma$ de $X$. Il suffit de montrer que $\sigma$ est de longueur finie. Mais $\sigma^{\vee}$ est un quotient de $P^{\delta}_{\cal B}$, donc $\sigma$ est un sous-objet de 
 $(P^{\delta}_{\cal B})^{\vee}$, et ce dernier est une limite inductive de repr\'esentations de longueur finie. D'autre part $\sigma$ est un quotient de $X$, donc 
 $\sigma$ est de type fini, et on conclut que $\sigma$ est de longueur finie. 
\end{proof}

\begin{coro}\label{GAB2}
Soit $\mathcal{C}^{\delta}$ la cat\'egorie des 
 repr\'esentations lisses de type fini de $G$, \`a caract\`ere central $\delta$. 
 Le foncteur $X\mapsto X_{\cal B}$ de $\mathcal{C}^{\delta}$ dans la cat\'egorie des $G$-modules pro-discrets est exact. 
\end{coro}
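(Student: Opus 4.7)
La stratégie consiste à passer à la dualité de Pontryagin et à exploiter la formule de la proposition~\ref{GAB1}. Partant d'une suite exacte courte $0\to A\to B\to C\to 0$ dans $\mathcal{C}^{\delta}$, on commence par dualiser: comme la dualité de Pontryagin est une antiéquivalence exacte, on obtient une suite exacte
\[
0\to C^\vee\to B^\vee\to A^\vee\to 0
\]
de $\mathcal{O}_L[G]$-modules pro-discrets compacts à caractère central $\delta^{-1}$. Grâce à la proposition~\ref{GAB1}, il suffira de montrer l'exactitude du foncteur $Y\mapsto P_{\cal B}^{\delta}\otimes_{E_{\cal B}^{\delta}} {\rm Hom}_G(P_{\cal B}^{\delta}, Y)$ sur cette suite, puis de redualiser pour conclure.

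Le foncteur ${\rm Hom}_G(P_{\cal B}^{\delta},-)$ est exact: par définition, $P_{\cal B}^{\delta}$ est le dual de Pontryagin d'une enveloppe injective, donc est un objet projectif de la catégorie pro-discrète duale. Appliqué à la suite ci-dessus, ceci fournit une suite exacte courte de $E_{\cal B}^{\delta}$-modules.

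L'obstacle principal sera l'exactitude du produit tensoriel $P_{\cal B}^{\delta}\otimes_{E_{\cal B}^{\delta}}-$ appliqué à la suite ainsi obtenue. On la déduira de la théorie des blocs de \paskunas: le foncteur $\sigma\mapsto {\rm Hom}_G(P_{\cal B}^{\delta}, \sigma^\vee)$ définit une équivalence exacte entre ${\rm Tors}_{\cal B}^{\delta}\,G$ et une sous-catégorie convenable de $E_{\cal B}^{\delta}$-modules, de quasi-inverse $M\mapsto (P_{\cal B}^{\delta}\otimes_{E_{\cal B}^{\delta}} M)^\vee$ également exact. En écrivant, comme dans la preuve de la proposition~\ref{GAB1}, ${\rm Hom}_G(P_{\cal B}^{\delta}, X^\vee)=\varinjlim_i {\rm Hom}_G(P_{\cal B}^{\delta}, \sigma_i^\vee)$ où $\sigma_i$ parcourt les quotients de $X$ dans ${\rm Tors}_{\cal B}^{\delta}\,G$, et en utilisant que le produit tensoriel commute aux limites inductives filtrantes, on se ramènera au cas des objets de longueur finie où l'exactitude résulte directement de l'équivalence. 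Il ne restera plus qu'à redualiser pour obtenir l'exactitude de $X\mapsto X_{\cal B}$.
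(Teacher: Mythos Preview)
Your approach is essentially the same as the paper's: dualize, invoke prop.~\ref{GAB1}, establish exactness of ${\rm Hom}_G(P_{\cal B}^{\delta},-)$ via injectivity of $(P_{\cal B}^{\delta})^{\vee}$, and then handle the tensor product using the Gabriel--{\paskunas} equivalence. Two small points deserve attention. First, your justification that $P_{\cal B}^{\delta}$ is projective ``dans la cat\'egorie pro-discr\`ete duale'' is incomplete as stated: by definition $(P_{\cal B}^{\delta})^{\vee}$ is an injective envelope only in the category of \emph{locally finite-length} smooth representations, whereas $X$ is merely smooth of finite type. The paper closes this gap by citing prop.~5.16 of \cite{Paskext}, which asserts injectivity of $(P_{\cal B}^{\delta})^{\vee}$ in the full category of smooth representations with central character~$\delta$. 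Second, your colimit reduction for the tensor product is correct in spirit but slightly under-specified (one must write the short exact sequence of ${\rm Hom}$'s as a filtered colimit of short exact sequences of finite-length $E_{\cal B}^{\delta}$-modules); the paper packages this more cleanly by proving outright that $P_{\cal B}^{\delta}$ is flat over $E_{\cal B}^{\delta}$, testing on finitely generated modules and using that $M\otimes_{E_{\cal B}^{\delta}}P_{\cal B}^{\delta}=M\wotimes_{E_{\cal B}^{\delta}}P_{\cal B}^{\delta}$ for such $M$.
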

\begin{proof}
  Soit $0\to X_1\to X\to X_2\to 0$ une suite exacte dans $\mathcal{C}^{\delta}$. Le 
  $G$-module lisse $(P^{\delta}_{\cal B})^{\vee}$ est injectif dans la cat\'egorie de toutes les repr\'esentations lisses de 
  $G$, \`a caract\`ere central 
  $\delta$ (prop. $5.16$ de \cite{Paskext}). On en d\'eduit que 
  la suite $$0\to {\rm Hom}_{G}(P^{\delta}_{\cal B}, X_2^{\vee})\to  {\rm Hom}_{G}(P^{\delta}_{\cal B}, X^{\vee})\to  {\rm Hom}_{G}(P^{\delta}_{\cal B}, X_1^{\vee})\to 0$$ reste exacte. Pour conclure via la proposition \ref{GAB1} il suffit de voir que 
$P^{\delta}_{\cal B}$ est plat sur $E^{\delta}_{\cal B}$. Mais la th\'eorie de Gabriel \cite{gaby} (voir aussi le chapitre $2$ de 
\cite{Paskext}) 
montre que les foncteurs ${\rm Hom}_{G}(P^{\delta}_{\cal B}, (-)^{\vee})$
et  $(-)\wotimes_{E^{\delta}_{\cal B}} P^{\delta}_{\cal B}$ induisent une \'equivalence de cat\'egories entre 
celle des repr\'esentations localement admissibles de $G$, \`a caract\`ere central $\delta$ et 
dont les sous-quotients irr\'eductibles sont dans $\cal B$, et celle des 
$E^{\delta}_{\cal B}$-modules compacts. Pour v\'erifier la platitude de 
$P^{\delta}_{\cal B}$ sur $E^{\delta}_{\cal B}$ il suffit de tester l'exactitude du foncteur $(-)\otimes_{E^{\delta}_{\cal B}} P^{\delta}_{\cal B}$ sur les $E^{\delta}_{\cal B}$-modules de type fini. 
Si $M$ est un tel module on a 
$M\otimes_{E^{\delta}_{\cal B}} P^{\delta}_{\cal B}\simeq M\wotimes_{E^{\delta}_{\cal B}} P^{\delta}_{\cal B}$. Comme le foncteur 
${\rm Hom}_{G}(P^{\delta}_{\cal B}, (-)^{\vee})$ est exact (cf. le d\'ebut de la preuve), il en est de m\^eme de son quasi-inverse 
$(-)\wotimes_{E^{\delta}_{\cal B}} P^{\delta}_{\cal B}$, ce qui permet de conclure. 
\end{proof}

\begin{coro}\label{fini}
 Si $X$ est une repr\'esentation lisse de type fini de $G$, \`a caract\`ere central $\delta$, alors ${\bf V}(X_{\cal B})$ est de type fini sur $R_{\cal B}^{{\rm ps}, \delta}$.
\end{coro}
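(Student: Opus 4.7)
Plan. My strategy is to translate the statement, via Proposition~\ref{GAB1}, into a question about a module over the ring $E^{\delta}_{\cal B}$; then to use the finiteness part of Theorem~\ref{pasku6} to pass from $E^{\delta}_{\cal B}$ to $R^{{\rm ps},\delta}_{\cal B}$; and finally to transfer the conclusion from the $G$-module side to the Galois side via Colmez's functor.

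First, by Proposition~\ref{GAB1} we have an isomorphism
$$X_{\cal B}^{\vee}\simeq P^{\delta}_{\cal B}\otimes_{E^{\delta}_{\cal B}} M, \qquad M := {\rm Hom}_G(P^{\delta}_{\cal B}, X^{\vee}),$$
where $M$ is a compact $E^{\delta}_{\cal B}$-module. I would then show that $M$ is finitely generated over $E^{\delta}_{\cal B}$. Since $X$ is finitely generated as a smooth $G$-module with central character $\delta$, fix a surjection $Y:=\bigoplus_{i=1}^{n} Y_i \twoheadrightarrow X$, where each $Y_i$ is the compact induction from $KZ$ to $G$ of some finite-dimensional $\O_L[KZ]$-module $V_i$ of appropriate central character. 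Pontryagin dualizing yields an inclusion $X^{\vee}\hookrightarrow Y^{\vee}$; applying the left exact functor ${\rm Hom}_G(P^{\delta}_{\cal B},-)$ together with Frobenius reciprocity identifies $M$ with a sub-$E^{\delta}_{\cal B}$-module of $M_Y := \bigoplus_i {\rm Hom}_{KZ}(P^{\delta}_{\cal B}, V_i^{\vee})$. Since $E^{\delta}_{\cal B}$ is noetherian (by the remark following Theorem~\ref{pasku6}), it suffices to see that each ${\rm Hom}_{KZ}(P^{\delta}_{\cal B}, V^{\vee})$ is finitely generated over $E^{\delta}_{\cal B}$; this follows from a topological Nakayama argument, since $P^{\delta}_{\cal B}$ modulo its maximal $E^{\delta}_{\cal B}$-ideal is the finite length module $\bigoplus_{\pi\in{\cal B}}\pi^{\vee}$, so ${\rm Hom}_{KZ}(\bigoplus_{\pi\in{\cal B}}\pi^{\vee}, V^{\vee})$ is finite-dimensional over $k_L$, and a finite generating system lifts by $\mathfrak{m}_{E^{\delta}_{\cal B}}$-adic completeness.

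Combining with Theorem~\ref{pasku6}, which supplies finite morphisms $R^{{\rm ps},\delta}_{\cal B}\to Z^{\delta}_{\cal B}$ (via $\iota^{\delta}_{\cal B}$, at least after inverting $p$ in small residue characteristics) and $Z^{\delta}_{\cal B}\to E^{\delta}_{\cal B}$, we deduce that $M$ is finitely generated over $R^{{\rm ps},\delta}_{\cal B}$. To pass from $M$ to ${\bf V}(X_{\cal B})=\varprojlim_{W} {\bf V}(X/W)$, I would exploit the compatibility of Colmez's functor with \paskunas's equivalence of categories: a direct computation on each finite length quotient $X/W$, followed by passage to the limit, produces a natural surjection
$$\check{V}(P^{\delta}_{\cal B})\wotimes_{E^{\delta}_{\cal B}} M \twoheadrightarrow {\bf V}(X_{\cal B}),$$
where $\check{V}(P^{\delta}_{\cal B}):=\varprojlim_{J} {\bf V}(P^{\delta}_{\cal B}/J)$ (over the cofinal system of open ideals $J$ of $E^{\delta}_{\cal B}$) is finitely generated over $R^{{\rm ps},\delta}_{\cal B}$ (it is essentially the universal $2$-dimensional deformation attached to $\rho_{\cal B}$). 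Since the left-hand side is a quotient of $\check{V}(P^{\delta}_{\cal B})^{\oplus n}$, where $n$ is any number of $E^{\delta}_{\cal B}$-generators of $M$, it is finitely generated over $R^{{\rm ps},\delta}_{\cal B}$, whence the conclusion.

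The step I expect to be the hardest is the finite generation of $M$ over the (in general noncommutative) ring $E^{\delta}_{\cal B}$: this requires the structural information about $P^{\delta}_{\cal B}$ as a $KZ$-module supplied by \paskunas's construction of the projective envelope. Granting this, everything else is a formal consequence of Theorem~\ref{pasku6} and the Colmez/\paskunas\ formalism.
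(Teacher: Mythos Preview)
There is a genuine gap in your Nakayama step. Set $N := {\rm Hom}_{KZ}(P^{\delta}_{\cal B}, V^{\vee})$, a right $E^{\delta}_{\cal B}$-module via precomposition, and let $J$ be the Jacobson radical of $E^{\delta}_{\cal B}$. Applying the contravariant, left-exact functor ${\rm Hom}_{KZ}(-, V^\vee)$ to the sequence $JP^{\delta}_{\cal B} \hookrightarrow P^{\delta}_{\cal B} \twoheadrightarrow P^{\delta}_{\cal B}/JP^{\delta}_{\cal B}$ identifies ${\rm Hom}_{KZ}\big(\bigoplus_{\pi\in{\cal B}}\pi^{\vee}, V^{\vee}\big)$ with the \emph{torsion submodule} $N[J]$, not with the coinvariants $N/NJ$ that topological Nakayama requires. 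Finiteness of $N[J]$ does not yield finiteness of $N/NJ$, and there is no evident map in the useful direction. So your assertion that $M$ is finitely generated over $E^{\delta}_{\cal B}$ remains unproved.

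The paper's proof sidesteps this by applying Nakayama \emph{directly to ${\bf V}(X_{\cal B})$} over $R^{{\rm ps},\delta}_{\cal B}$ (maximal ideal $m$): right exactness of ${\bf V}$ reduces one to showing that $X_{\cal B}/mX_{\cal B}$ has finite length, i.e.\ that $X_{\cal B}^\vee[m]$ does. Via Proposition~\ref{GAB1} and the flatness of $P^{\delta}_{\cal B}$ over $E^{\delta}_{\cal B}$ this becomes $(P^{\delta}_{\cal B}/mP^{\delta}_{\cal B}) \otimes M[m]$, so it is precisely the \emph{torsion} $M[m]$ that enters --- and $M[m] = {\rm Hom}_G(P^{\delta}_{\cal B}/mP^{\delta}_{\cal B}, X^\vee) \simeq {\rm Hom}_G\big(X, (P^{\delta}_{\cal B}/mP^{\delta}_{\cal B})^\vee\big)$ is finite-dimensional once $(P^{\delta}_{\cal B}/mP^{\delta}_{\cal B})^\vee$ is known to be admissible. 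The decisive nontrivial input, absent from your sketch, is that $P^{\delta}_{\cal B}/mP^{\delta}_{\cal B}$ has finite $G$-length; this is \cite[prop.\,6.7]{PT}. Incidentally, the finite generation of $\check V(P^{\delta}_{\cal B})$ over $R^{{\rm ps},\delta}_{\cal B}$ that you invoke in your final step would itself rest on this same fact, via the same Nakayama argument.
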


\begin{proof} 
Le $R_{\cal B}^{{\rm ps}, \delta}$-module ${\bf V}(X_{\cal B})$ est une limite inverse de modules de la forme ${\bf V}(\pi)$ avec $\pi$ de longueur finie. Chaque 
${\bf V}(\pi)$ est de longueur finie sur $R_{\cal B}^{{\rm ps}, \delta}$, donc ${\bf V}(X_{\cal B})$ est un $R_{\cal B}^{{\rm ps}, \delta}$-module compact. 
Soit $m$ l'id\'eal maximal de $R_{\cal B}^{{\rm ps}, \delta}$. Par le lemme de Nakayama topologique il suffit de montrer que ${\bf V}(X_{\cal B})/m {\bf V}(X_{\cal B})\simeq {\bf V}(X_{\cal B}/m X_{\cal B})$ est 
de dimension finie sur $k_L$, et pour cela 
il suffit de voir que 
$X_{\cal B}/m X_{\cal B}$ est de longueur finie comme $k_L[G]$-module, ou encore que $X_{\cal B}^{\vee}[m]$ est de longueur finie comme $k_L[G]$-module compact. 
Comme $P_{\cal B}^{\delta}$ est plat sur $E_{\cal B}^{\delta}$, par la proposition ci-dessus et sa preuve, il suffit de v\'erifier que $P^{\delta}_{\cal B}/m P^{\delta}_{\cal B}\otimes_{E^{\delta}_{\cal B}/m E^{\delta}_{\cal B}} {\rm Hom}_G(P^{\delta}_{\cal B}, X^{\vee})[m]$ est de longueur finie.
Puisque $(P_{\cal B}/m P_{\cal B})^{\vee}$ est de longueur finie (prop. $6.7$ de \cite{PT}), 
il suffit de voir que ${\rm Hom}_G(P_{\cal B}, X^{\vee})[m]$ est de dimension finie sur $k_L$.
 Cela d\'ecoule du fait que $(P_{\cal B}/m P_{\cal B})^{\vee}$ est admissible (car de longueur finie) et que ${\rm Hom}_G(X, \pi)$ est de dimension finie quand 
$X$ est de type fini et $\pi$ est admissible.
\end{proof}

\Subsection{Compl\'etion ${\cal B}$-adique des induites compactes: le cas irr\'eductible}\label{PETIT9}
\begin{prop}\label{A33}
Si ${\cal B}$ est le bloc contenant
 $\Pi_{r,\chi,P}$, alors
$${(I_{r,\chi})}_{\cal B}=
{\varprojlim}_n I_{r,\chi}/P^n$$
De plus,
$${\rm End}_{k_L[G]}({(I_{r,\chi})}_{\cal B})=
{\varprojlim}_n k_L[T]/P^n $$
\end{prop}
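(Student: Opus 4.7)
My plan has two parts: (1) show that the system $\{I_{r,\chi}/P^n\}_n$ is cofinal in the system of finite-length quotients of $I_{r,\chi}$ lying in $\mathcal{B}$, and (2) compute the endomorphism ring at each finite level.

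\textbf{Cofinality.} Let $\sigma$ be a finite-length quotient of $I_{r,\chi}$ with all Jordan-H\"older constituents in $\mathcal{B}$. By Barthel-Livn\'e's theory combined with the ``irreducible'' hypothesis of this subsection (i.e. that $\Pi_{r,\chi,P}$ is irreducible, excluding the cases $r \in \{0,p-1\}$, $P = T\pm 1$), every Jordan-H\"older constituent of a finite-length quotient of $I_{r,\chi}$ has the form $\Pi_{r,\chi,P'}$ for some irreducible $P' \in k_L[T]$. An inspection of \paskunas's block list (Proposition~\ref{pasku0}) then shows that only $\Pi_{r,\chi,P}$ itself can belong to $\mathcal{B}$: in the singleton (supercuspidal) case this is trivial, and in the two-element principal-series block $\{B(\chi_1,\chi_2), B(\chi_2,\chi_1)\}$ the ``other'' element has the form $\Pi_{r',\chi',P'}$ with different parameters $(r',\chi')$. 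Thus every Jordan-H\"older factor of $\sigma$ is isomorphic to $\Pi_{r,\chi,P}$, on which $P(T)=0$; d\'evissage along a Jordan-H\"older filtration then gives $P(T)^m = 0$ on $\sigma$ for $m \geq \mathrm{length}(\sigma)$, so the surjection $I_{r,\chi}\twoheadrightarrow\sigma$ factors through $I_{r,\chi}/P^m$.

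Conversely, each $I_{r,\chi}/P^n$ itself lies in $\mathrm{Tors}_\mathcal{B}\,G$: the filtration by $P^i I_{r,\chi}/P^n I_{r,\chi}$ has graded pieces isomorphic to $I_{r,\chi}/P = \Pi_{r,\chi,P}$, where we use that multiplication by $P$ is injective on $I_{r,\chi}$ (any $k_L[T]$-torsion element would generate a finite-length $G$-submodule of $I_{r,\chi}$, which is ruled out by Barthel-Livn\'e's description of its submodules as the $Q \cdot I_{r,\chi}$ with $Q \in k_L[T]$). The two directions together yield $(I_{r,\chi})_\mathcal{B} = \varprojlim_n I_{r,\chi}/P^n$.

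\textbf{Endomorphism ring.} Applying $\mathrm{Hom}_G(-, I_{r,\chi}/P^n)$ to the short exact sequence $0 \to I_{r,\chi} \stackrel{P^n}{\to} I_{r,\chi} \to I_{r,\chi}/P^n \to 0$ and using that $P^n$ acts as zero on $I_{r,\chi}/P^n$ identifies $\mathrm{End}_G(I_{r,\chi}/P^n)$ with $\mathrm{Hom}_G(I_{r,\chi}, I_{r,\chi}/P^n)$. Applying $\mathrm{Hom}_G(I_{r,\chi}, -)$ to the same sequence and using $\mathrm{End}_G(I_{r,\chi}) = k_L[T]$ gives a natural inclusion $k_L[T]/P^n \hookrightarrow \mathrm{Hom}_G(I_{r,\chi}, I_{r,\chi}/P^n)$; surjectivity reduces to the vanishing of the $P^n$-torsion in $\mathrm{Ext}^1_G(I_{r,\chi}, I_{r,\chi})$, which follows from a Frobenius reciprocity argument reducing the Ext to $KZ$-level together with the Barthel-Livn\'e structure. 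Passing to the inverse limit in $n$ yields $\mathrm{End}_{k_L[G]}((I_{r,\chi})_\mathcal{B}) = \varprojlim_n k_L[T]/P^n$.

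\textbf{Main obstacle.} Both steps lean on Barthel-Livn\'e's fine structural information about $I_{r,\chi}$: the description of its $G$-submodules as the $Q \cdot I_{r,\chi}$ for $Q \in k_L[T]$, and the tight control on Jordan-H\"older constituents of its finite-length quotients. The ``irreducible'' hypothesis is precisely what makes that structure as clean as stated; the exceptional cases (deferred to the next subsection) require genuine modifications.
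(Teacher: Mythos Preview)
Your cofinality argument takes a different route from the paper's. Rather than analysing Jordan--H\"older constituents of an arbitrary quotient $\sigma$, the paper uses directly the structural fact (which you invoke only in the converse direction) that every nonzero $G$-submodule of $I_{r,\chi}$ contains some $Q\cdot I_{r,\chi}$ with $Q\neq 0$; hence any finite-length quotient is already a quotient of some $I_{r,\chi}/Q$, which the Chinese remainder theorem splits as $\bigoplus_{P'} I_{r,\chi}/P'^{\,n_{P'}}$, and one finishes by noting that distinct irreducible $P'$ give distinct blocks. This needs no d\'evissage and no irreducibility hypothesis on $\Pi_{r,\chi,P}$: the proposition carries no such hypothesis (the section title ``le cas irr\'eductible'' refers to the $KZ$-type $W_{r,\chi}$, not to $\Pi_{r,\chi,P}$), and the paper's argument covers the reducible cases uniformly.

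Your attempted justification of the block-separation step has a genuine gap: the claim that in a two-element principal-series block the ``other'' member arises from parameters $(r',\chi')\neq(r,\chi)$ fails for $r=p-2$, since $\epsilon^{p-1}=1$ in $k_L^{\times}$ gives $\Pi_{p-2,\chi,T-\lambda}\cong B(\chi\mu_\lambda,\chi\mu_{\lambda^{-1}})$, whose block partner $B(\chi\mu_{\lambda^{-1}},\chi\mu_\lambda)$ is $\Pi_{p-2,\chi,T-\lambda^{-1}}$ with the \emph{same} $(r,\chi)$. (The paper simply asserts that distinct $P$ yield distinct blocks, without attempting your case analysis.) For the endomorphism ring, the paper avoids your $\mathrm{Ext}^1$-vanishing sketch entirely: it bounds $\dim_{k_L}\mathrm{End}_G(I_{r,\chi}/Q)\leq\deg Q$ by extending scalars to $\overline{k_L}$ and reducing by d\'evissage to the standard case $\deg Q=1$, which together with the evident injection $k_L[T]/Q\hookrightarrow\mathrm{End}_G(I_{r,\chi}/Q)$ gives equality at each finite level.
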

\begin{proof}
Tout sous-$G$-module non nul de $I_{r,\chi}$ contient $Q\cdot I_{r,\chi}$
avec $Q\neq 0$ (ceci est standard, voir par exemple le cor. $2.1.4$ de \cite{DGE}), et donc
 un quotient $\Pi$
de $I_{r,\chi}$, de longueur finie, est quotient de $I_{r,\chi}/Q$, avec $Q\in k_L[T]$, non nul.
Le th\'eor\`eme des restes chinois implique donc que $\Pi$ est quotient de
$\oplus_{P}I_{r,\chi}/P^{n_P}$, o\`u la somme porte
sur les $P$ irr\'eductibles et les $n_P$ sont presque tous nuls.
Le premier point s'en d\'eduit en remarquant que les blocs associ\'es aux diff\'erents $P$
sont distincts (\`a $r,\chi$ fix\'e, s'entend).

Le second d\'ecoule de ce que ${\rm End}(I_{r,\chi}/Q)=k_L[T]/Q$. En effet, 
la suite exacte $0\to I_{r,\chi}\to I_{r,\chi}\to I_{r,\chi}/Q\to 0$ et l'isomorphisme
${\rm End}( I_{r,\chi})\simeq k_L[T]$ fournissent une injection de $k_L[T]/Q$ dans ${\rm End}(I_{r,\chi}/Q)$, il suffit donc de v\'erifier que 
ce dernier $k_L$-espace vectoriel est de dimension $\leq \deg(Q)$. Pour cela on peut \'etendre les scalaires \`a une cl\^oture alg\'ebrique de $k_L$ (en utilisant le lemme $5.1$ de \cite{Paskext}), 
et on se ram\`ene par d\'evissage au cas $\deg Q=1$, qui est standard. 
\end{proof}

\begin{prop}
Si $P\in k_L[T]$ est irr\'eductible
et n'admet pas $0$ pour racine, si ${\cal B}$ est le bloc qui lui correspond,
et si $T$ agit sur $(I_{r,\chi})_{\cal B}$ via $T_{\rm BL}$,
on a un isomorphisme de $k_L[T]_P[G]$-modules\footnote{$k_L[T]_P$ est le compl\'et\'e $P$-adique de $k_L[T]$.}:
$$(I_{r,\chi})_{\cal B}\cong k_L[T]_P\wotimes_{k_L[T,T^{-1}]}B(\chi\epsilon^{r+1}\mu_T,\chi\mu_{T^{-1}})$$
dans le cas g\'en\'erique, et une suite exacte
$$0\to k_L[T]_P\wotimes_{k_L[T,T^{-1}]}B(\chi\epsilon^{r+1}\mu_T,\chi\mu_{T^{-1}})\to
(I_{r,\chi})_{\cal B}\to \chi\mu_\lambda\to 0$$
si $r=p-1$ et $P=T-\lambda$ avec $\lambda=\pm1$.
\end{prop}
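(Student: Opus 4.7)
The plan is to reduce to the level of $P^n$-quotients and then pass to the inverse limit. By Proposition \ref{A33}, $(I_{r,\chi})_{\cal B} = \varprojlim_n I_{r,\chi}/P^n$; since $P(0) \neq 0$, the Hecke operator $T = T_{\rm BL}$ is invertible in $k_L[T]/P^n$ for every $n$, and therefore
\[
k_L[T]_P \wotimes_{k_L[T,T^{-1}]} B(\chi\epsilon^{r+1}\mu_T,\chi\mu_{T^{-1}}) = \varprojlim_n B_{r,\chi,P^n},
\]
with $B_{r,\chi,P^n}$ defined as the evident $P^n$-analogue of $B_{r,\chi,P}$ from Remark \ref{class3}. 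The statement thus reduces to comparing $I_{r,\chi}/P^n$ and $B_{r,\chi,P^n}$ compatibly in $n$.

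The key inductive tool is the pair of compatible short exact sequences
\[
0 \to M/P^{n-1} \lomapr{\cdot P} M/P^n \to M/P \to 0, \qquad M \in \{I_{r,\chi},\, B(\chi\epsilon^{r+1}\mu_T,\chi\mu_{T^{-1}})\},
\]
whose exactness rests on the freeness of $I_{r,\chi}$ as a $k_L[T]$-module and of $B(\chi\epsilon^{r+1}\mu_T,\chi\mu_{T^{-1}})$ as a $k_L[T,T^{-1}]$-module. For each $n$ I would construct, using the Barthel-Livn\'e comparison, a natural $k_L[T]$-linear and $G$-\'equivariant morphism between $I_{r,\chi}/P^n$ and $B_{r,\chi,P^n}$. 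The base case $n=1$ is Remark \ref{class3} itself: in the generic case it is an isomorphism, and in the exceptional case it furnishes a canonical map $B_{r,\chi,P} \to \Pi_{r,\chi,P}$, obtained by identifying the common Steinberg subquotient ${\rm St}\otimes\chi\mu_\lambda$ (which sits as a quotient of $B_{r,\chi,P}$ and as a subrepresentation of $\Pi_{r,\chi,P}$), whose kernel and cokernel are both isomorphic to $\chi\mu_\lambda$.

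In the generic case, the five lemma applied to the resulting diagram immediately propagates the isomorphism to every $n$, and the inverse limit yields the claimed isomorphism. In the exceptional case, the snake lemma produces at each level a four-term exact sequence
\[
0 \to \chi\mu_\lambda \to B_{r,\chi,P^n} \to I_{r,\chi}/P^n \to \chi\mu_\lambda \to 0,
\]
and passing to the inverse limit turns this into the three-term sequence of the statement: the transition maps on the kernels factor through multiplication by $P$, which annihilates $\chi\mu_\lambda$, so the inverse limit of the kernels vanishes; the cokernels are constant equal to $\chi\mu_\lambda$; exactness at the limit uses the vanishing of $\varprojlim^1$, ensured by the compactness of all modules in sight.

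The main obstacle is the snake-lemma step in the exceptional case: one must verify that the connecting homomorphism $\chi\mu_\lambda \to \chi\mu_\lambda$ appearing at every inductive level is an isomorphism, so that the kernels and cokernels remain of length one and do not accumulate as $n$ grows. This is the only non-formal ingredient in the plan and amounts to an explicit computation of the effect of multiplication by $P$ on the relevant extension classes.
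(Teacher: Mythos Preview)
Your plan matches the paper's in the generic case: the Barthel--Livn\'e map $I_{r,\chi}\to B(\chi\epsilon^{r+1}\mu_T,\chi\mu_{T^{-1}})$ is $k_L[T]$-linear, it descends to a compatible system $I_{r,\chi}/P^n\to B_{r,\chi,P^n}$, the five lemma propagates the isomorphism at $n=1$ to all $n$, and the limit gives the result. In the exceptional case, however, you have misidentified the map. The Barthel--Livn\'e comparison goes from $I_{r,\chi}$ to the principal series, not the other way; at level $n=1$ its image is the socle $\chi\mu_\lambda$ of $B_{r,\chi,P}$, so kernel and cokernel are both ${\rm St}\otimes\chi\mu_\lambda$, not $\chi\mu_\lambda$. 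The composite $B_{r,\chi,P}\twoheadrightarrow{\rm St}\otimes\chi\mu_\lambda\hookrightarrow\Pi_{r,\chi,P}$ you describe does exist for $n=1$, but it comes from no $k_L[T]$-linear map before taking quotients, and your inductive scheme gives no way to fill in a middle arrow in the ladder for general $n$: the four-term sequence $0\to\chi\mu_\lambda\to B_{r,\chi,P^n}\to I_{r,\chi}/P^n\to\chi\mu_\lambda\to 0$ simply does not arise from the Barthel--Livn\'e comparison.

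Working with the correct direction, one does not get a direct map $(B)_{\cal B}\to(I)_{\cal B}$; an extra step is needed. The paper sets $M_{\lambda,n}$ equal to the image of $I_{r,\chi}/P^n$ in $B_{r,\chi,P^n}$ (equivalently $\ker(B_{r,\chi,P^n}\twoheadrightarrow{\rm St}\otimes\chi\mu_\lambda)$). The transition maps on the kernels ${\rm St}\otimes\chi\mu_\lambda\subset I_{r,\chi}/P^n$ are zero, whence $\varprojlim_n I_{r,\chi}/P^n\cong\varprojlim_n M_{\lambda,n}$. Then one observes that $M_{\lambda,n}$ contains the shifted copy $P\cdot B_{r,\chi,P^n}\cong B_{r,\chi,P^{n-1}}$ with quotient $\chi\mu_\lambda$; passing to the limit and identifying $P\cdot k_L[T]_P\cong k_L[T]_P$ yields the exact sequence of the statement. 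It is this index shift by $P$ that manufactures the injection of the completed principal series into $(I_{r,\chi})_{\cal B}$---there is no compatible family $B_{r,\chi,P^n}\to I_{r,\chi}/P^n$ behind it.
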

\begin{proof}
On a une fl\`eche naturelle $I_{r,\chi}\to B(\chi\epsilon^{r+1}\mu_T,\chi\mu_{T^{-1}})$
de $k_L[T][G]$-modules, o\`u $T$ agit sur $I_{r,\chi}$ par $T_{\rm BL}$.
Celle-ci induit une fl\`eche de $k_L[T,T^{-1}][G]$-modules
$$k_L[T,T^{-1}]\otimes_{k_L[T]}I_{r,\chi}\to B(\chi\epsilon^{r+1}\mu_T,\chi\mu_{T^{-1}})$$
qui est injective et est un isomorphisme sauf si $r=p-1$ o\`u l'on a une suite exacte
$$0\to k_L[T,T^{-1}]\otimes_{k_L[T]}I_{r,\chi}\to B(\chi\epsilon^{r+1}\mu_T,\chi\mu_{T^{-1}})
\to ({\rm St}\otimes\chi\mu_1)
\oplus ({\rm St}\otimes\chi\mu_{-1})\to 0$$
et $T$ agit par $1$ (resp. $-1$) sur le premier (resp.~second) terme du conoyau.

$\bullet$
Si $r\neq p-1$ ou si $P\neq T-\lambda$, avec $\lambda\neq\pm1$ (cas g\'en\'erique), 
on a donc, pour tout~$n$, un isomorphisme
$$I_{r,\chi}/P^n\cong (k_L[T]/P^n)\otimes_{k_L[T,T^{-1}]}
B(\chi\epsilon^{r+1}\mu_T,\chi\mu_{T^{-1}})$$

$\bullet$ 
Si $r= p-1$ et si $\lambda=\pm1$ (cas exceptionnels), on a, pour tout $n$, des suites exactes
\begin{align*}
0\to {\rm St}\otimes \chi\mu_\lambda\to 
I_{r,\chi}/(T-\lambda)^n\to &M_{\lambda,n}\to 0\\
0\to (T-\lambda)(B(\chi\epsilon^{r+1}\mu_T,\chi\mu_{T^{-1}})/(T-\lambda)^{n-1})\to
&M_{\lambda,n} \to \chi\mu_\lambda\to0\\
M_{\lambda,n}:={\rm Ker}
\big(B(\chi\epsilon^{r+1}\mu_T,\chi\mu_{T^{-1}})/(T-\lambda)^n
\to {\rm St}\otimes \chi\mu_\lambda\big)&
\end{align*}
Quand on passe de $n+1$ \`a $n$, 
la fl\`eche ${\rm St}\otimes\chi\mu_\lambda\to {\rm St}\otimes\chi\mu_\lambda$ est $0$ (et pas l'identit\'e).
En passant aux limites projectives, cela fournit, dans le cas g\'en\'erique,
un isomorphisme
$$(I_{r,\chi})_{\cal B}\cong 
k_L[T]_P\wotimes_{k_L[T,T^{-1}]}
B(\chi\epsilon^{r+1}\mu_T,\chi\mu_{T^{-1}})$$
et dans les cas exceptionnels, un isomorphisme
$(I_{r,\chi})_{\cal B}\cong\varprojlim_nM_{\lambda,n}$, et une suite exacte
$$0\to P\, k_L[T]_P\wotimes_{k_L[T,T^{-1}]}
B(\chi\epsilon^{r+1}\mu_T,\chi\mu_{T^{-1}})\to (I_{r,\chi})_{\cal B} \to \chi\mu_\lambda\to 0$$
On en d\'eduit le r\'esultat.
\end{proof}

\begin{lemm}\label{petit6}
{\rm(\cite[Lemma\,1.5.11]{Kis2})}
Si $P\in k_L[T]$ est irr\'eductible et si ${\cal B}$ est le bloc que lui est associ\'e,
l'application naturelle
$\alpha_{r,\chi}:Z_{\cal B}\to {\rm End}((I_{r,\chi})_{\cal B})$ est surjective
sauf dans le cas $r=p-2$ et $P=T\pm 1$, correspondant \`a ${\cal B}=\{B(\chi\mu_{\pm1},\chi\mu_{\pm1})\}$,
o\`u l'image est $k_L[[P^2]]$.
\end{lemm}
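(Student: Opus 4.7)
The plan is to use Theorem~\ref{pasku6} to factor $\alpha_{r,\chi}$ through the universal pseudo-deformation ring $R_{\mathcal{B}}^{\mathrm{ps},\delta}$, and then compute explicitly how the trace generators act on $(I_{r,\chi})_{\mathcal{B}}$. By Proposition~\ref{A33} the target is $k_L[T]_P\cong k_L[[P]]$ with $T$ realized by $T_{\mathrm{BL}}$, so everything reduces to identifying which power series in $T$ lie in the image of this composite map.

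First I would dispose of the supercuspidal case $P=T$, where the block $\mathcal{B}=\{\Pi_{r,\chi,T}\}$ contains a single absolutely irreducible simple. In this setting $(I_{r,\chi})_{\mathcal{B}}$ is, up to identifications, a projective object in $\mathcal{B}$ covering that simple; since the block is local one has $\mathrm{End}((I_{r,\chi})_{\mathcal{B}})\cong E_{\mathcal{B}}=Z_{\mathcal{B}}$, and $\alpha_{r,\chi}$ is trivially surjective (even an isomorphism).

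For a principal-series block, i.e.\ $P$ irreducible with a nonzero root $\lambda\in\overline{k_L}$, I would use the explicit description of $(I_{r,\chi})_{\mathcal{B}}$ as the $P$-adic completion of $B(\chi\epsilon^{r+1}\mu_T,\chi\mu_{T^{-1}})$ obtained in the preceding proposition. Threading the functor $\mathbf{V}$ through this completion and using Theorem~\ref{pasku6}, the Frobenius trace $T_{\mathcal{B}}^\delta(\mathrm{Frob}_p)\in Z_{\mathcal{B}}^\delta$ should act on $(I_{r,\chi})_{\mathcal{B}}$ as $c\,(T+T^{-1})$ for some unit $c\in k_L^\times$ (here $\epsilon(p)=1$ in $k_L$, so the twist $\epsilon^{r+1}$ contributes trivially). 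Writing $k_L[T]_P\cong k_L[[T-\lambda]]$, the derivative $1-T^{-2}$ of $T+T^{-1}$ is nonzero at $\lambda$ unless $\lambda=\pm 1$, so $T+T^{-1}$ topologically generates $k_L[T]_P$ as soon as $\lambda\neq\pm 1$; if $\lambda=\pm 1$, a Taylor expansion around $\lambda$ shows that this single element reaches only $k_L[[P^2]]$.

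To finish, I would match the exceptional condition $\lambda=\pm 1$ with the scalar block: the two characters $\chi\epsilon^{r+1}\mu_\lambda$ and $\chi\mu_{\lambda^{-1}}$ coincide exactly when $\epsilon^{r+1}=1$, which in the range $0\leq r\leq p-1$ forces $r=p-2$; this is precisely the scalar block $\mathcal{B}=\{B(\chi\mu_\lambda,\chi\mu_\lambda)\}$, where $R_{\mathcal{B}}^{\mathrm{ps},\delta}$ (at fixed determinant) has no generators beyond the Frobenius trace, so the image is indeed exactly $k_L[[P^2]]$. In the other case $\lambda=\pm 1$ with $r\neq p-2$, the block is non-scalar of the form $\{B(\chi_1,\chi_2),B(\chi_2,\chi_1)\}$ with $\chi_1\neq\chi_2$, and $R_{\mathcal{B}}^{\mathrm{ps},\delta}$ carries extra independent deformations of the two characters whose image in $k_L[T]_P$ supplies non-symmetric terms in $T,T^{-1}$, breaking the $T\leftrightarrow T^{-1}$ symmetry and restoring full surjectivity. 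The main obstacle will be the explicit computation of the Frobenius-trace action on the completion $(I_{r,\chi})_{\mathcal{B}}$, together with verifying in the non-scalar $\lambda=\pm 1$ situation that the extra deformations actually map to operators that supplement $T+T^{-1}$ to generate all of $k_L[T]_P$.
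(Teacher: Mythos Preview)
The paper does not give its own proof of this lemma; it simply records the statement and refers to Kisin's Lemma~1.5.11 in \cite{Kis2}. So there is nothing to compare your argument against in the present paper.

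Your overall strategy---passing to $\mathbf{V}((I_{r,\chi})_{\cal B})$ and computing the action of traces $T_{\cal B}^\delta(g)$ there via Theorem~\ref{pasku6}---is sound and does lead to the result. The computation you sketch is essentially correct: for $g$ corresponding to $p^n u$ with $u\in\Z_p^\ast$ under local class field theory, the trace acts on the rank-one module $\mathbf{V}((I_{r,\chi})_{\cal B})$ (the character $\chi\epsilon^{r+1}\mu_T$ over $k_L[T]_P$) by $c\,(u^{r+1}T^n+T^{-n})$ for a unit $c$, so the image is the closed subalgebra generated by all $u^{r+1}T+T^{-1}$. When $r\neq p-2$ the ratio $u^{r+1}$ varies and you recover $T$ itself; when $r=p-2$ only $T+T^{-1}$ appears, which is a uniformizer at $\lambda$ unless $\lambda=\pm1$, in which case its image is $k_L[[P^2]]$.

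There is, however, a genuine error in your supersingular paragraph. The object $(I_{r,\chi})_{\cal B}$ is \emph{not} the projective generator $P_{\cal B}^\delta$: its endomorphism ring is $k_L[[T]]$ (Proposition~\ref{A33}), which is one-dimensional, whereas $E_{\cal B}^\delta=Z_{\cal B}^\delta$ is (modulo~$\varpi$) the deformation ring of an irreducible two-dimensional representation with fixed determinant, hence of Krull dimension~$\geq 3$. So the identification ${\rm End}((I_{r,\chi})_{\cal B})\cong E_{\cal B}=Z_{\cal B}$ fails and the argument collapses. The fix is to argue as in the principal-series case: $\mathbf{V}((I_{r,\chi})_{\cal B})$ is a rank-$2$ deformation of the irreducible $\rho_{\cal B}$ over $k_L[[T]]$, inducing a map $R_{\cal B}^{{\rm ps},\delta}\to k_L[[T]]$, and surjectivity amounts to checking that the extension $\mathbf{V}(I_{r,\chi}/T^2)$ of $\rho_{\cal B}$ by itself is non-split.

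Two further points you will need to address to make the argument airtight: (a) the map in the lemma is from $Z_{\cal B}$, not $Z_{\cal B}^\delta$, so you must check that the action factors through $Z_{\cal B}\to Z_{\cal B}^\delta$ and that this does not lose anything for the upper bound in the scalar case; (b) you are computing the image inside ${\rm End}_{\G_{\Q_p}}(\mathbf{V}((I_{r,\chi})_{\cal B}))$ rather than ${\rm End}_G((I_{r,\chi})_{\cal B})$, so you must verify that the functorial map between these two copies of $k_L[T]_P$ is the identity (equivalently, that $T_{\rm BL}$ goes to the variable $T$ on the Galois side).
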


\begin{coro}\label{A34}
{\rm(\cite[Lemma\,1.5.2]{Kis2})}
${\bf V}({(I_{r,\chi})}_{\cal B})$ est un module de type fini sur~$Z_{\cal B}$, de rang~$2$, 
si $P=T$ et {\og de rang $1$\fg} si $P$ n'a pas $0$ comme racine.
\end{coro}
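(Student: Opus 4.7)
The plan is to reduce the rank computation to the endomorphism algebra $k_L[T]_P$ identified in Proposition~\ref{A33}, and then transfer the result to $Z_{\cal B}$ via Lemma~\ref{petit6}. Finite generation of ${\bf V}((I_{r,\chi})_{\cal B})$ over $Z_{\cal B}$ follows at once from Corollary~\ref{fini} applied to $X=I_{r,\chi}$ (which is smooth, of finite type, with central character $\delta$): the resulting module is of finite type over $R_{\cal B}^{{\rm ps},\delta}$, hence over $Z_{\cal B}^\delta$ via the finite map $\iota_{\cal B}^\delta$ of Theorem~\ref{pasku6}, and therefore over $Z_{\cal B}$, whose action factors through $Z_{\cal B}^\delta$.

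For the rank, we use the description $(I_{r,\chi})_{\cal B}=\varprojlim_n I_{r,\chi}/P^n$ from Proposition~\ref{A33}. Multiplication by $P$ gives short exact sequences
$$0\to I_{r,\chi}/P^{n-1}\to I_{r,\chi}/P^n\to I_{r,\chi}/P\to 0$$
in ${\rm Tors}^\delta_{\cal B}\,G$ whose final term is $\Pi_{r,\chi,P}$. Using the exactness of ${\bf V}$ on this category, one computes ${\bf V}(I_{r,\chi}/P^n)$ inductively as a free $k_L[T]/P^n$-module whose rank equals $\dim_{k_L[T]/P}{\bf V}(\Pi_{r,\chi,P})$.

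When $P=T$ (supercuspidal case), part~(iii) of the proposition on ${\bf V}$ gives ${\bf V}(\Pi_{r,\chi,T})={\rm Ind}_{\G_{\Q_{p^2}}}^{\G_{\Q_p}}\omega_2^{r+1}\otimes\chi$, of $k_L$-dimension $2$; passing to the inverse limit, ${\bf V}((I_{r,\chi})_{\cal B})$ is free of rank $2$ over $k_L[[T]]$. Since $P=T$ is not one of the exceptional cases $P=T\pm 1$ of Lemma~\ref{petit6}, the surjection $Z_{\cal B}\twoheadrightarrow k_L[[T]]$ yields rank~$2$ over $Z_{\cal B}$. When $P(0)\ne 0$, part~(ii) gives ${\bf V}(\Pi_{r,\chi,P})=\epsilon^{r+1}\mu_\lambda\chi$, of dimension~$1$ over $k_L(\lambda)=k_L[T]/P$, so ${\bf V}((I_{r,\chi})_{\cal B})$ is free of rank~$1$ over $k_L[T]_P$; in the generic case one has $Z_{\cal B}\twoheadrightarrow k_L[T]_P$ and the rank over $Z_{\cal B}$ is $1$, while in the exceptional case $r=p-2$, $P=T\pm 1$, the image $k_L[[P^2]]$ of $Z_{\cal B}$ has index~$2$ in $k_L[T]_P$, so the module has rank~$2$ as a $Z_{\cal B}$-module --- which accounts for the quotation marks around ``rank $1$'' in the statement.

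The main delicate point is the exactness of ${\bf V}$ on the relevant sequences and its behaviour under inverse limits of finite-length $\O_L[G]$-modules, which is needed to lift the finite-level rank to the limit; once this is granted, the result follows directly from Proposition~\ref{A33}, the computation of ${\bf V}(\Pi_{r,\chi,P})$, and Lemma~\ref{petit6}.
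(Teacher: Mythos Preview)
Your approach is essentially the same as the paper's: reduce to ${\bf V}$ of the quotient $(I_{r,\chi})_{\cal B}/P\cong\Pi_{r,\chi,P}$, compute the rank there (supersingular gives~$2$, principal series gives~$1$), and lift. The paper does this in one line via a topological Nakayama argument over the complete local ring ${\rm End}((I_{r,\chi})_{\cal B})=k_L[T]_P$, while you unwind it level by level and pass to the limit; both are fine, and your explicit treatment of the exceptional case $r=p-2$, $P=T\pm1$ (explaining the quotation marks) is a useful addition.

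One small gap: in your finite-generation argument via Corollary~\ref{fini}, the last step ``therefore over $Z_{\cal B}$, whose action factors through $Z_{\cal B}^\delta$'' is not quite justified as written --- factoring through a quotient does not by itself give finite generation over the source unless you know $Z_{\cal B}^\delta$ (or rather its relevant quotient) is finite over the image of $Z_{\cal B}$. You can bypass this entirely: your own rank computation already shows ${\bf V}((I_{r,\chi})_{\cal B})$ is free of finite rank over $k_L[T]_P$, and Lemma~\ref{petit6} tells you that $k_L[T]_P$ is finite (of index $1$ or $2$) over the image of $Z_{\cal B}$, so finite generation over $Z_{\cal B}$ follows directly. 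The detour through Corollary~\ref{fini} is unnecessary here.
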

\begin{proof}
Cela suit de ce que ${\bf V}({(I_{r,\chi})}_{\cal B}/P)$ est de rang fini
 sur ${\rm End}({(I_{r,\chi})}_{\cal B})/P$, et ce rang vaut $2$ si $P=T$ (car ${(I_{r,\chi})}_{\cal B}/P$
est supersinguli\`ere) et $1$ sinon (car ${(I_{r,\chi})}_{\cal B}/P$ est une s\'erie principale 
\`a des caract\`eres pr\`es).
\end{proof}
\begin{rema}\label{A35}
Dans le cas d'un bloc de la s\'erie principale,
${\bf V}({(I_{r,\chi})}_{\cal B})$
est la d\'eformation non ramifi\'ee d'un caract\`ere. 
Dans le cas ${\cal B}$ supersingulier, on obtient une d\'eformation {\og Fontaine-Laffaille\fg}
de ${\bf V}(\Pi_{r,\chi,T})$, cf.~\cite[Lemma\,1.5.3]{Kis2}. 
\end{rema}

\Subsection{Compl\'etion ${\cal B}$-adique des induites compactes: exactitude}\label{PETIT10}
On fixe $\delta:\Q_p^\dual\to\O_L^\dual$ un caract\`ere lisse, et tous nos $KZ$-modules et $G$-modules
sont suppos\'es de caract\`ere central $\delta$.
Si $W$ est une repr\'esentation de $KZ$, de type fini sur $\O_L$, on pose
$$I(W):={\text{c-Ind}}_{KZ}^G\,W$$
Comme $W\mapsto I(W)$ est exact, on d\'eduit du cor.\,\ref{GAB2} le r\'esultat suivant:
\begin{coro}\label{GAB3}
Les foncteurs $W\mapsto \widetilde{I(W)}$ et $W\mapsto I(W)_{\cal B}$
sont exacts.
\end{coro}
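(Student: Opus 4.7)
The plan is to reduce the statement to Corollary \ref{GAB2} applied to the short exact sequences coming from compact induction of $KZ$-modules. The key observation is that compact induction $W\mapsto I(W)={\text{c-Ind}}_{KZ}^G W$ is an exact functor (this is a standard general fact, since as a vector space $I(W)=\bigoplus_{g\in KZ\backslash G}g\cdot W$), so the only thing to check is that the output lands in the category $\mathcal{C}^\delta$ on which the ${\cal B}$-adic completion functor was shown to be exact in Corollary \ref{GAB2}.

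More concretely, given a short exact sequence $0\to W_1\to W\to W_2\to 0$ of $KZ$-modules of finite type over $\O_L$ with central character $\delta$, I first note that each $I(W_i)$ is smooth (inherited from $W_i$), has central character $\delta$, and is finitely generated as a $G$-module (any finite set of $\O_L$-generators of $W_i$ generates $I(W_i)$ over $G$). Hence each $I(W_i)$ belongs to $\mathcal{C}^\delta$, and the exactness of compact induction yields a short exact sequence
$$0\to I(W_1)\to I(W)\to I(W_2)\to 0$$
in $\mathcal{C}^\delta$. Applying Corollary \ref{GAB2} then gives exactness of
$$0\to I(W_1)_{\cal B}\to I(W)_{\cal B}\to I(W_2)_{\cal B}\to 0,$$
which is precisely exactness of the functor $W\mapsto I(W)_{\cal B}$.

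For the profinite completion $\widetilde{I(W)}$, I use the decomposition recalled in \S\,\ref{PETIT8}:
$$\widetilde{I(W)}=\prod\nolimits_{\cal B} I(W)_{\cal B},$$
the product running over the blocks of ${\rm Tors}^\delta\, G$. Since (small) products preserve short exact sequences in the category of pro-discrete $G$-modules, and since each factor $W\mapsto I(W)_{\cal B}$ is exact by the previous paragraph, the product functor $W\mapsto \widetilde{I(W)}$ is exact as well.

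There is essentially no obstacle beyond checking the hypotheses of Corollary \ref{GAB2}; the only subtle point is verifying that $I(W)$ is finitely generated as a $G$-module, which is immediate from the fact that $W$ is finitely generated over $\O_L$. All the real work has already been done in Corollary \ref{GAB2}, whose proof used the flatness of $P_{\cal B}^\delta$ over $E_{\cal B}^\delta$ together with Gabriel's equivalence of categories.
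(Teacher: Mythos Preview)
Your proof is correct and follows exactly the paper's approach: the paper simply notes that $W\mapsto I(W)$ is exact and invokes Corollary~\ref{GAB2}. You have spelled out the verification that $I(W)\in\mathcal{C}^\delta$ and handled the profinite completion via the block decomposition $\widetilde{I(W)}=\prod_{\cal B}I(W)_{\cal B}$, which is precisely the implicit argument behind the paper's one-line justification.
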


\begin{coro}\label{petit11}
Soit $W$ un $\O_L[KZ]$-module sans $p$-torsion.

{\rm (i)} $\widetilde{I(W)}$ est sans $p$-torsion.

{\rm (ii)} $\widetilde{I(W)}=\varprojlim_n\widetilde{I(W)}/p^n$,
et $\widetilde{I(W)}/p^n=\widetilde{I(W/p^n)}$, pour tout $n$.
\end{coro}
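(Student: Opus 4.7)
L'id\'ee centrale est d'appliquer l'exactitude du foncteur $W\mapsto\widetilde{I(W)}$ (cor.~\ref{GAB3}) \`a la suite exacte
\[
0\to W\xrightarrow{p^n} W\to W/p^nW\to 0,
\]
qui est effectivement exacte puisque $W$ est sans $p$-torsion. On obtient, pour tout $n\geq 1$, une suite exacte de $\O_L[G]$-modules
\[
0\to \widetilde{I(W)}\xrightarrow{p^n}\widetilde{I(W)}\to \widetilde{I(W/p^nW)}\to 0.
\]
L'injectivit\'e de la multiplication par $p^n$ pour $n=1$ donne imm\'ediatement (i), et l'exactitude \`a droite fournit l'isomorphisme naturel $\widetilde{I(W)}/p^n\widetilde{I(W)}\simeq \widetilde{I(W/p^nW)}$, qui est le second \'enonc\'e du~(ii).

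Pour la premi\`ere partie du (ii), il reste \`a montrer que $\widetilde{I(W)}=\varprojlim_n\widetilde{I(W)}/p^n\widetilde{I(W)}$. Par d\'efinition, $\widetilde{I(W)}=\varprojlim_U I(W)/U$, o\`u $U$ parcourt les sous-$\O_L[G]$-modules ouverts tels que $I(W)/U\in{\rm Tors}\,G$. Chaque tel quotient, \'etant de longueur finie, est annihil\'e par une puissance $p^{n(U)}$ de $p$; autrement dit, $U$ contient $p^{n(U)}I(W)$. La cl\'e est donc que le syst\`eme $\{p^n I(W)\}_{n\geq 1}$ est cofinal dans la famille des $U$ d\'efinissant la topologie profinie de $\widetilde{I(W)}$, d'o\`u un isomorphisme canonique $\widetilde{I(W)}\simeq \varprojlim_n \widetilde{I(W/p^nW)}$. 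En combinant avec l'isomorphisme obtenu \`a l'\'etape pr\'ec\'edente, on obtient le r\'esultat.

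Le seul point qui demande une v\'erification est la cofinalit\'e: il faudra d\'erouler soigneusement que toute fl\`eche continue de $\widetilde{I(W)}$ dans un objet de ${\rm Tors}\,G$ se factorise par $\widetilde{I(W)}/p^n\widetilde{I(W)}$ pour un $n$ convenable, mais ceci r\'esulte directement du fait que l'objet cible est tu\'e par une puissance de $p$. Je ne vois pas de v\'eritable obstacle technique: tout d\'ecoule de l'exactitude du foncteur $\widetilde{I(\cdot)}$ appliqu\'ee \`a la multiplication par $p^n$, combin\'ee avec la description explicite de la topologie profinie.
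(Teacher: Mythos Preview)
Your proof is correct and in fact more direct than the paper's. Both arguments rest on the exactness of $W\mapsto\widetilde{I(W)}$ (cor.~\ref{GAB3}) and on the cofinality of the $p^nI(W)$ among the open sub-$\O_L[G]$-modules defining the profinite topology, but they combine these ingredients differently.

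You apply exactness to $0\to W\xrightarrow{p^n}W\to W/p^n\to 0$ and read off (i) and the identification $\widetilde{I(W)}/p^n\simeq\widetilde{I(W/p^n)}$ immediately; the cofinality then gives $\widetilde{I(W)}=\varprojlim_n\widetilde{I(W/p^n)}=\varprojlim_n\widetilde{I(W)}/p^n$. The paper instead begins with the cofinality identification $\widetilde{I(W)}=\varprojlim_n\widetilde{I(W/p^n)}$, then passes the isomorphism $I(W)/p^n\xrightarrow{\sim}p^nI(W)/p^{2n}\subset I(W)/p^{2n}$ through the completion to obtain $\widetilde{I(W)}/p^n\xrightarrow{\sim}p^n\widetilde{I(W)}/p^{2n}$, and concludes by observing that a $p$-torsion element of the inverse limit is then infinitely $p$-divisible, hence zero because every finite-length quotient has trivial $p$-divisible part. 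Your route avoids this $p$-divisibility argument entirely: injectivity of $p$ on $\widetilde{I(W)}$ drops out of the exact sequence at once.
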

\begin{proof}
L'identification $\widetilde{I(W)}=\varprojlim_n\widetilde{I(W/p^n)}$ est imm\'ediate.
La multiplication par $p^n$ induit un isomorphisme $I(W)/p^n\overset{\sim}{\to}p^nI(W)/p^{2n}I(W)\subset
I(W)/p^{2n}$; elle induit donc aussi un isomorphisme
$\widetilde{I(W)}/p^n\overset{\sim}{\to}p^n\widetilde{I(W)}/p^{2n}\widetilde{I(W)}\subset
\widetilde{I(W)}/p^{2n}$. A la limite, on voit qu'un \'el\'ement de $p$-torsion
de $\varprojlim_n\widetilde{I(W)}/p^n$ est dans l'image de la multiplication par $p^n$, pour tout $n$.
Le r\'esultat s'en d\'eduit car le sous-groupe $p$-divisible de $\widetilde{I(W)}$ est r\'eduit
\`a $0$ (vu que c'est le cas pour tous les quotients de longueur finie de $I(W)$).
\end{proof}

\begin{rema}\label{petit12}
En utilisant la d\'ecomposition suivant les blocs, on en d\'eduit
les m\^emes r\'esultats pour la compl\'etion ${\cal B}$-adique.
\end{rema}

\section{Le compl\'et\'e universel de ${\rm LL}^{[a,b]}(M)$}\label{PETIT5}
Soit $M$ un $(\varphi,N,\G_{\Q_p})$-module de rang $2$ sur $L\otimes\Q_p^{\rm nr}$,
absolument irr\'eductible en restriction au sous-groupe d'inertie de $\G_{\Q_p}$ (et donc $N=0$),
et soit ${\rm LL}(M)$ la $L$-repr\'esentation lisse de $G$ associ\'ee par la correspondance
de Langlands locale classique (elle est supercuspidale gr\^ace \`a l'hypoth\`ese faite sur $M$).

Si $a<b$ sont des entiers, notons ${\rm LL}^{[a,b]}(M)$ la repr\'esentation localement
alg\'ebrique 
$${\rm LL}^{[a,b]}(M):={\rm LL}(M)\otimes {\rm Sym}^{b-a-1}\otimes{\det}^a$$
On suppose 
que le caract\`ere central $\delta_M^{[a,b]}$ de ${\rm LL}^{[a,b]}(M)$ est unitaire.

D'apr\`es la classification des repr\'esentations supercuspidales
de $G$, il existe une repr\'esentation irr\'eductible $\sigma_M$ de $KZ$ (de dimension finie),
telle que\footnote{On note ${\rm ind}:={\text{c-Ind}}$ l'induite \`a support compact.}
$${\rm ind}_{KZ}^G\sigma_M={\rm LL}(M)\quad{\text{ou bien}}\quad 
{\rm ind}_{KZ}^G\sigma_M={\rm LL}(M)\oplus ({\rm LL}(M)\otimes\mu_{-1})$$
On a donc le m\^eme r\'esultat pour ${\rm LL}^{[a,b]}(M)$ en rempla\c{c}ant
$\sigma_M$ par 
$$\sigma_M^{[a,b]}:=\sigma_M\otimes {\rm Sym}^{b-a-1}\otimes{\det}^a$$
c'est-\`a-dire:
$${\rm ind}_{KZ}^G\sigma_M^{[a,b]}={\rm LL}^{[a,b]}(M)\quad{\text{ou bien}}\quad
{\rm ind}_{KZ}^G\sigma_M^{[a,b]}={\rm LL}^{[a,b]}(M)\oplus ({\rm LL}^{[a,b]}(M)\otimes\mu_{-1})$$

L'hypoth\`ese selon laquelle le caract\`ere central de ${\rm LL}^{[a,b]}(M)$ est unitaire
implique qu'il existe des r\'eseaux de $\sigma_M^{[a,b]}$ stables par $KZ$.
Si $\sigma^+$ est un tel r\'eseau, on note $I_M(\sigma^+)$ l'image de
${\rm ind}_{KZ}^G\sigma^+$ dans ${\rm LL}^{[a,b]}(M)$; c'est un r\'eseau de ${\rm LL}^{[a,b]}(M)$
et $\widehat{ I_M(\sigma^+)}=\varprojlim I_M(\sigma^+)/\varpi^n$ est un r\'eseau
du compl\'et\'e universel
 $\widehat{\rm LL}\hskip0mm^{[a,b]}(M)$ de ${\rm LL}^{[a,b]}(M)$.

\begin{prop}\label{A32}
${\rm End}_{L[G]}(\widehat{\rm LL}\hskip0mm^{[a,b]}(M))=L$.
\end{prop}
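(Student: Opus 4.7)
The strategy is to reduce the computation to Schur's lemma for the dense, locally algebraic subspace ${\rm LL}^{[a,b]}(M)\subset \widehat{\rm LL}^{[a,b]}(M)$. First, note that ${\rm LL}^{[a,b]}(M)={\rm LL}(M)\otimes{\rm Sym}^{b-a-1}\otimes\det^a$ is absolutely irreducible as an $L[G]$-module in the locally algebraic category: the smooth factor ${\rm LL}(M)$ is absolutely irreducible supercuspidal (it is the classical local Langlands correspondent of the irreducible $(\varphi,N,\G_{\Q_p})$-module $M$), and tensoring with an absolutely irreducible algebraic representation preserves absolute irreducibility. In particular, ${\rm End}_{L[G]}({\rm LL}^{[a,b]}(M))=L$.

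Let $\phi\in{\rm End}_{L[G]}(\widehat{\rm LL}^{[a,b]}(M))$. By the universal property of the unitary completion, $\phi$ is completely determined by its continuous, bounded, $G$-equivariant restriction to the dense subspace ${\rm LL}^{[a,b]}(M)$. A continuous $G$-equivariant map preserves the property of being locally algebraic: if a vector $v$ is fixed (up to an algebraic action) by a compact open subgroup $K_1\subset G$, then so is $\phi(v)$. Consequently $\phi$ sends ${\rm LL}^{[a,b]}(M)$ into the space of locally algebraic vectors $(\widehat{\rm LL}^{[a,b]}(M))^{\rm lalg}$. The whole argument reduces to establishing the identification $(\widehat{\rm LL}^{[a,b]}(M))^{\rm lalg}={\rm LL}^{[a,b]}(M)$: once this is known, $\phi$ restricts to an endomorphism of ${\rm LL}^{[a,b]}(M)$, which is a scalar by Schur, and density forces $\phi$ itself to be this scalar.

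The hard step is thus the identification of the locally algebraic vectors. I would first show that $\widehat{\rm LL}^{[a,b]}(M)$ is admissible unitary, which follows from the admissibility of ${\rm LL}^{[a,b]}(M)$ as a locally algebraic representation with fixed unitary central character $\delta_M^{[a,b]}$ (using e.g.\ a $KZ$-stable lattice $\sigma^+$ in $\sigma_M^{[a,b]}$ and the compact model $\widehat{I_M(\sigma^+)}$, whose reduction mod $p$ is of finite type over $k_L[\![K]\!]$ in each $K$-isotypic component). For admissible Banach representations of $G=\gl_2(\Q_p)$, the subspace of locally algebraic vectors is itself an admissible locally algebraic representation, hence a finite sum of tensor products $\pi\otimes W$ with $\pi$ smooth irreducible admissible and $W$ algebraic irreducible. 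Fixing the central character and using that ${\rm LL}(M)$ is supercuspidal (so no principal-series constituent can appear with the correct infinitesimal character), the only such constituent compatible with the given $M$ and Hodge-Tate weights $(a,b)$ is ${\rm LL}^{[a,b]}(M)$ itself, giving the desired identification. An alternative route, if available at this point of the paper, is to identify $\widehat{\rm LL}^{[a,b]}(M)$ with $\bPi(V)$ for some potentially semi-stable $V$ with $D_{\rm pst}(V)\simeq M$ and invoke the known description of $\bPi(V)^{\rm lalg}$ via classical local Langlands.
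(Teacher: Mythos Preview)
Your overall reduction is exactly the one in the paper: density of ${\rm LL}^{[a,b]}(M)$ plus Schur for the irreducible locally algebraic part reduces everything to showing $(\widehat{\rm LL}^{[a,b]}(M))^{\rm lalg}={\rm LL}^{[a,b]}(M)$. The problem is in how you propose to prove this identification.

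Your first route rests on the claim that $\widehat{\rm LL}^{[a,b]}(M)$ is admissible. This is false, and the paper says so explicitly a few lines below the proposition (just before Prop.~\ref{A12}). One way to see it: for every ${\cal L}\in\piqp(M_{\rm dR})(\Qbar_p)$ the universal completion surjects onto the irreducible Banach representation $\Pi_{M,{\cal L}}^{[a,b]}$, and these are pairwise non-isomorphic; an admissible unitary Banach representation of $\gl_2(\Q_p)$ has finite length, so cannot have infinitely many irreducible quotients. Your justification (``reduction mod~$p$ is of finite type over $k_L[[K]]$ in each $K$-isotypic component'') confuses finite generation as a $G$-module with admissibility: the mod~$p$ reduction of $I_M(\sigma^+)$ is a compact induction, finitely generated over $G$ but with infinite-dimensional $K_1$-invariants. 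Your second route fails for the same underlying reason: there is no single $V$ with $\widehat{\rm LL}^{[a,b]}(M)\cong\bPi(V)$, precisely because the completion dominates all the $\bPi(V_{M,{\cal L}}^{[a,b]})$ simultaneously.

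The paper's argument for $(\widehat{\rm LL}^{[a,b]}(M))^{\rm lalg}={\rm LL}^{[a,b]}(M)$ is instead a direct, elementary computation on the compact induction. One writes ${\rm ind}_{KZ}^G\sigma_M^{[a,b]}=\oplus_{n\geq 0}{\rm ind}_{KZ}^{X_n}\sigma_M^{[a,b]}$ along the $KZ$-double cosets $X_n$ (equivalently, distance-$n$ spheres in the tree), so that the universal completion consists of sums $\sum_n x_n$ with $x_n\to 0$. The truncation maps $R_n(x)=\sum_{i\le n}x_i$ are $K$-equivariant; if $x$ is $K_r$-algebraic then so is each $R_n(x)$. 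Now one uses admissibility of the \emph{original} locally algebraic representation ${\rm ind}_{KZ}^G\sigma_M^{[a,b]}$ (not of the completion) to see that the $K_r$-algebraic vectors in $\oplus_{i\le n}{\rm ind}_{KZ}^{X_i}\sigma_M^{[a,b]}$ stabilise for $n\ge n(r)$, forcing $x=R_{n(r)}(x)\in{\rm ind}_{KZ}^G\sigma_M^{[a,b]}$. This is the missing ingredient in your proposal.
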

\begin{proof}
Comme ${\rm LL}^{[a,b]}(M)$ est dense dans $\widehat{\rm LL}\hskip0mm^{[a,b]}(M)$
et comme
$${\rm End}_{L[G]}({\rm LL}^{[a,b]}(M))=L$$ 
d'apr\`es le lemme de Schur classique, il
suffit de prouver que les vecteurs localement alg\'ebriques de $\widehat{\rm LL}\hskip0mm^{[a,b]}(M)$ sont r\'eduits
\`a ${\rm LL}^{[a,b]}(M)$, et pour cela il suffit de prouver le m\^eme r\'esultat
pour ${\rm ind}_{KZ}^G\sigma_M^{[a,b]}$. 

Notons $X_n$ la double classe $KZ\matrice{p^n}{0}{0}{1}KZ$
(cela correspond aux points \`a distance~$n$ du sommet central
sur l'arbre de ${\rm PGL}_2(\Q_p)$). Alors ${\rm ind}_{KZ}^G\sigma_M^{[a,b]}=\oplus_n {\rm ind}_{KZ}^{X_n}\sigma_M^{[a,b]}$,
et le compl\'et\'e universel de ${\rm ind}_{KZ}^G\sigma_M^{[a,b]}$ est l'ensemble des
$x=\sum_{n\geq 0}x_n$, avec $x_n\in {\rm ind}_{KZ}^{X_n}\sigma_M^{[a,b]}$, et $x_n\to 0$ quand $n\to\infty$
(i.e. $x_n\in p^{k_n}{\rm ind}_{KZ}^{X_n}\sigma^+$, avec $k_n\to +\infty$).
L'application $x\mapsto R_n(x)=\sum_{i\leq n}x_i$ est $K$-\'equivariante et $x=\lim_{n\to\infty}R_n(x)$.
Si $x$ est $K_r$-alg\'ebrique, o\`u $K_r=1+p^r{\rm M}_2(\Z_p)$, il en est de m\^eme de $R_n(x)$ pour tout $n$.
Comme ${\rm ind}_{KZ}^G\sigma_M^{[a,b]}$ est admissible, les vecteurs $K_r$-alg\'ebriques dans
$\oplus_{i\leq n}{\rm ind}_{KZ}^{X_i}\sigma_M^{[a,b]}$ ne d\'ependent pas de $n\geq n(r)$; 
on en d\'eduit que $R_n(x)=R_{n(r)}(x)$
pour tout $n\geq n(r)$, et donc que $x=R_{n(r)}(x)\in {\rm ind}_{KZ}^G\sigma_M^{[a,b]}$.
En r\'esum\'e, les vecteurs localement alg\'ebriques du compl\'et\'e universel de
${\rm ind}_{KZ}^G\sigma_M^{[a,b]}$ sont r\'eduits \`a ${\rm ind}_{KZ}^G\sigma_M^{[a,b]}$.

Le r\'esultat s'en d\'eduit.
\end{proof}

Soit $\Pi$ un $L[G]$-banach unitaire de longueur finie, contenant
${\rm LL}^{[a,b]}(M)$ comme sous-espace dense.
Par propri\'et\'e universelle de $\widehat{\rm LL}\hskip0mm^{[a,b]}(M)$ l'injection
${\rm LL}^{[a,b]}(M)\hookrightarrow \Pi$ se prolonge en une application continue
$\widehat{\rm LL}\hskip0mm^{[a,b]}(M)\to \Pi$.
Comme $\widehat{\rm LL}\hskip0mm^{[a,b]}(M)$ n'est pas admissible, il n'y a aucune raison
{\it a priori}\footnote{Par exemple, si $G=\Z_p$, l'inclusion ${\cal C}^1(\Z_p)\hookrightarrow {\cal C}(\Z_p)$
est d'image dense mais n'est pas bijective (et ${\cal C}(\Z_p)$ est admissible
comme repr\'esentation de $\Z_p$, pas ${\cal C}^1(\Z_p)$).}
 pour que l'image soit ferm\'ee, mais on a le r\'esultat suivant.
\begin{prop}\label{A12}
L'application $\widehat{\rm LL}\hskip0mm^{[a,b]}(M)\to \Pi$ est surjective, et donc $\Pi$ est un quotient
de $\widehat{\rm LL}\hskip0mm^{[a,b]}(M)$.
\end{prop}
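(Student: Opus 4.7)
The plan is to reduce the statement to a surjectivity modulo $p$ via topological Nakayama, and then to exploit density together with the finite length of $\Pi$. First, fix a $KZ$-stable lattice $\sigma^+$ in $\sigma_M^{[a,b]}$; then $I_M(\sigma^+)$ is a $G$-stable $\O_L$-lattice in ${\rm LL}^{[a,b]}(M)$ and $\widehat{\rm LL}^{[a,b]}(M) = L \otimes_{\O_L} \widehat{I_M(\sigma^+)}$, where $\widehat{I_M(\sigma^+)}$ denotes the $p$-adic completion. Since the continuous map $\widehat{\rm LL}^{[a,b]}(M) \to \Pi$ sends the bounded lattice $\widehat{I_M(\sigma^+)}$ to a bounded subset of $\Pi$, after rescaling $\sigma^+$ by a suitable power of $p$ we may assume its image lies in the unit ball $\Pi^+$. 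The proposition then reduces to the surjectivity of the integral map $f\colon \widehat{I_M(\sigma^+)} \to \Pi^+$ of $p$-adically complete $\O_L[G]$-modules.

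By topological Nakayama (one builds preimages inductively as $p$-adically convergent series in $\widehat{I_M(\sigma^+)}$, using its $p$-adic completeness), this reduces in turn to the surjectivity of the map of smooth $k_L[G]$-modules
\[
\overline f\colon I_M(\sigma^+)/p = {\rm ind}_{KZ}^G(\sigma^+/p) \longrightarrow \Pi^+/p\Pi^+.
\]
By Frobenius reciprocity, the image of $\overline f$ coincides with the $G$-subrepresentation of $\Pi^+/p\Pi^+$ generated by the image of $\sigma^+$. To arrange things optimally, I would take $\sigma^+$ as large as possible inside $\Pi^+$, namely $\sigma^+ := \sigma_M^{[a,b]} \cap \Pi^+$. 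Setting $N := {\rm LL}^{[a,b]}(M) \cap \Pi^+$, the density of ${\rm LL}^{[a,b]}(M)$ in $\Pi$ gives $N + p\Pi^+ = \Pi^+$, and one checks directly that $N$ is saturated in $\Pi^+$ (if $pv \in N$ with $v \in \Pi^+$, then $v \in {\rm LL}^{[a,b]}(M) \cap \Pi^+ = N$), so that $N \cap p\Pi^+ = pN$. Consequently, $\overline f$ is surjective if and only if the $p^\infty$-torsion $G$-module $N/I_M(\sigma^+)$ is $p$-divisible, i.e.\ $I_M(\sigma^+) + pN = N$.

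The hard step is this last $p$-divisibility, equivalent to ruling out any nonzero quotient $Q := \Pi^+/(I_M(\sigma^+) + p\Pi^+)$. Such a $Q$ would be a nonzero admissible smooth $k_L[G]$-module of finite length, receiving a $G$-surjection from $N$ (because $N + p\Pi^+ = \Pi^+$) on which the image of $\sigma^+$, and hence of the whole $G$-module $I_M(\sigma^+)$, vanishes. To derive a contradiction I would combine the absolute irreducibility and supercuspidal nature of ${\rm LL}^{[a,b]}(M)$, which tightly constrains how $N/I_M(\sigma^+)$ can sit inside $\Pi^+$, with the finite length of $\Pi$, which bounds the Jordan--H\"older constituents of $\Pi^+/p\Pi^+$ and therefore of $Q$; the crucial input I expect to need is a rigidity-type statement showing that any such $Q$ forces a Banach subquotient of $\Pi$ not lying in the closure of ${\rm LL}^{[a,b]}(M)$, contradicting density. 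This last step is where I expect the main obstacle to lie.
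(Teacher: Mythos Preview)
Your reduction via topological Nakayama is correct, and so is your saturation argument showing that surjectivity of $\overline f$ is equivalent to $N = I_M(\sigma^+) + pN$. But the final step is a genuine gap, and the ``rigidity-type statement'' you sketch is too vague to carry the argument. The issue is that you have locked yourself into the specific lattice $I_M(\sigma^+)$, and there is no reason the $G$-translates of $\sigma^+$ alone should exhaust $\Pi^+/p\Pi^+$: the density of ${\rm LL}^{[a,b]}(M)$ in $\Pi$ only tells you that $N$ surjects onto $\Pi^+/p\Pi^+$, not that the particular finitely generated sublattice $I_M(\sigma^+)\subset N$ does.

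The paper's proof avoids this difficulty by exploiting the freedom in the definition of the universal unitary completion: any finitely generated $\O_L[G]$-lattice in ${\rm LL}^{[a,b]}(M)$ has the same $\varpi$-adic completion up to equivalence of norms, so one may choose the lattice \emph{after} looking at $\Pi$. The key input is that $\Pi^+/\varpi$ has finite length as a $k_L[G]$-module (this is the deep fact for $\gl_2(\Q_p)$, not just that $\Pi$ has finite Banach length). Writing $r$ for this length, density lets one pick $v_1,\dots,v_r\in{\rm LL}^{[a,b]}(M)\cap\Pi^+$ whose images generate $\Pi^+/\varpi$; the $\O_L[G]$-module $W$ they generate is finitely generated, so $\widehat W$ is a legitimate unit ball of $\widehat{\rm LL}^{[a,b]}(M)$, and now $\widehat W\to\Pi^+$ is surjective modulo~$\varpi$ \emph{by construction}, hence surjective by Nakayama. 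In short, rather than proving $p$-divisibility of $N/I_M(\sigma^+)$, one simply replaces $I_M(\sigma^+)$ by a lattice $W$ adapted to $\Pi$.
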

\begin{proof}
La surjectivit\'e de $\widehat{\rm LL}\hskip0mm^{[a,b]}(M)\to\Pi$ est une cons\'equence
du fait que $\Pi$ est r\'esiduellement de longueur finie~\cite{Paskext,CDP}: on fixe un r\'eseau
$\Pi^+$ de $\Pi$, et on note $r$ la longueur
de $\Pi^+/\varpi$.  Comme ${\rm LL}^{[a,b]}(M)$ est dense dans $\Pi$,
on peut trouver $v_1,\dots,v_r\in {\rm LL}^{[a,b]}(M)\cap\Pi^+$
dont les images modulo~$\varpi$ engendrent $\Pi^+/\varpi$.
Mais alors le sous-$\O_L[G]$-module $W$ de ${\rm LL}^{[a,b]}(M)$ engendr\'e par $v_1,\dots,v_r$
est de type fini, et donc son compl\'et\'e $\varpi$-adique $\widehat W$ peut \^etre
choisi comme boule unit\'e de $\widehat{\rm LL}\hskip0mm^{[a,b]}(M)$.
Par construction $\widehat{\rm LL}\hskip0mm^{[a,b]}(M)\to\Pi$ envoie $\widehat W$
dans $\Pi^+$ et induit une surjection modulo~$\varpi$.
On en d\'eduit, puisque tout est complet pour la topologie $\varpi$-adique,
que $\widehat W\to \Pi^+$ est surjective, ce qui permet de conclure.
\end{proof}

\section{Repr\'esentations de type $M$}
On va associer un certain nombre d'objets
\`a un $(\varphi,N,\G_{\Q_p})$-module $M$ satisfaisant les hypoth\`eses
du chap.\,\ref{PETIT5} (en plus de la repr\'esentation ${\rm LL}^{[a,b]}(M)$ d\'ej\`a introduite). 
\Subsection{D\'eformations de Rham et vecteurs localement alg\'ebriques}\label{defo1}
Soit $V_0$ une $L$-repr\'esentation de dimension~$2$ de $\G_{\Q_p}$, absolument irr\'eductible.
Soient $\delta=\det V_0$, et ${\cal B}$ le bloc correspondant \`a la r\'eduction de $V_0$.
Alors $V_0$ correspond \`a un point $x\in X:={\rm Spec}\,R^{{\rm ps},\delta}_{\cal B}[\frac{1}{p}]$,
et la restriction de $T_{\cal B}^\delta$ \`a une boule ouverte~$B$ de l'espace
rigide associ\'e \`a $X$, contenant $x$ et suffisamment petite, est la trace\footnote{
D'apr\`es~\cite[prop.\,G]{Gae}, sur l'ouvert d'irr\'eductibilit\'e absolue $U$ de $X$,
on dispose d'une repr\'esentation \`a valeurs dans une alg\`ebre d'Azumaya $A$ sur $\O(U)$.
Par ailleurs, d'apr\`es~\cite[cor.\,2.23]{Gae}, le compl\'et\'e de $\O(U)$ en $x$,
est l'anneau des d\'eformations universelles de $V_0$ de d\'eterminant fix\'e. 
Comme $V_0$ est absolument irr\'eductible, cet anneau est isomorphe \`a $L[[T_1,T_2,T_3]]$;
en particulier $U$ est lisse en tout point. L'alg\`ebre $A$ se trivialise sur une extension finie \'etale
de $\O(U)$ et, comme $U$ est lisse en $x$, cette extension \'etale est triviale sur une petite boule
autour de $x$.}
 d'une repr\'esentation
$\rho_B:\G_{\Q_p}\to \gl_2(\O(B)^+)$.
Alors $\O(B)^+\cong \O_{L}[[T_1,T_2,T_3]]$ (mais il ne semble pas
y avoir de choix naturel des coordonn\'ees $T_1,T_2,T_3$).

Les techniques des ${\rm n}^{\rm os}$~II.2.4 et~II.3.1 de~\cite{gl2} fournissent
un faisceau $G$-\'equivariant $U\mapsto D(\rho_B)\boxtimes U$ sur $\piqp(\Q_p)$
et une $\O(B)^+[\frac{1}{p}]$-repr\'esentation $\bPi(\rho_B)$ de $G$ telle que, 
pour tout $y\in U$, on ait $\bPi(\rho_B)_y=\bPi(\rho_y)$,
et qui vit dans une suite exacte\footnote{$\bPi(\rho_B)^\dual:={\rm Hom}_{\O(B)^+}(\bPi(\rho_B),\O(B)^+[\frac{1}{p}])$}
$$0\to \bPi(\rho_B)^\dual\otimes((x|x|)^{-1}\delta)\to D(\rho_B)\boxtimes\piqp(\Q_p)\to \bPi(\rho_B)\to 0$$
 Si $E$ est un quotient de $\O(B)^+[\frac{1}{p}]$,
de dimension finie sur $L$, alors $\rho_E:=E\otimes \rho_B$ est une $E$-repr\'esentation de dimension~$2$,
et $\bPi(\rho_E):=E\otimes\bPi(\rho_B)$ est une $E$-repr\'esentation de $G$, v\'erifiant
${\bf V}(\bPi(\rho_E))=\rho_E$. En sp\'ecialisant la suite exacte ci-dessus et en utilisant
les techniques de~\cite[chap.~V]{gl2} ou de~\cite[chap.\,V, VI]{CD}, on obtient des suites
exactes
\begin{align*}
&0\to \bPi(\rho_E)^\dual\otimes((x|x|)^{-1}\delta)\to D(\rho_E)\boxtimes\piqp(\Q_p)\to \bPi(\rho_E)\to 0\\
&0\to (\bPi(\rho_E)^{\rm an})^\dual\otimes((x|x|)^{-1}\delta)\to 
D_{\rm rig}(\rho_E)\boxtimes\piqp(\Q_p)\to \bPi(\rho_E)^{\rm an}\to 0
\end{align*}
On peut appliquer ceci, en particulier, aux quotients $E$ de 
$R^{{\rm ps},\delta}_{\cal B}[\frac{1}{p}]/{\goth m}_x^n$, pour $n\in\N$.
Les $E$-repr\'esentations $V=\rho_E$ que l'on obtient v\'erifient ${\rm End}_{E[\G_{\Q_p}]}V=E$ et, 
vues comme $L$-repr\'esentations,
 n'ont alors que $V_0$ comme composante de Jordan-H\"older.

\smallskip
Soit $V$ comme ci-dessus ($E$ est donc une alg\`ebre locale de corps r\'esiduel $L$;
on note ${\goth m}_E$ son id\'eal maximal).

\begin{theo}\label{defo2}
$\bPi(V)^{\rm alg}$ est dense dans $\bPi(V)$ si et seulement si
$V$ est de Rham \`a poids de Hodge-Tate non tous \'egaux.
\end{theo}
\begin{rema}\label{defo3}
(i) Si $V=V_0$, ce r\'esultat est
prouv\'e dans \cite[chap.\,VI]{gl2} (avec des preuves simplifi\'ees dans~\cite{Dosp1,poids});
si $V$ est une extension de $V_0$ par $V_0$, il est prouv\'e dans~\cite{D-dR}.

(ii) Soit $\Pi$ une $L$-repr\'esentation unitaire de $G$ dont toutes les composantes
de Jordan-H\"older sont isomorphes \`a $\Pi_0$. Supposons $\Pi_0^{\rm alg}$ irr\'eductible et
dense dans $\Pi_0$. Alors une r\'ecurrence imm\'ediate 
(utilisant la densit\'e de $(\Pi/\Pi_0)^{\rm alg}$ dans $\Pi/\Pi_0$ si $\Pi^{\rm alg}$
est dense dans $\Pi$) sur la longueur de $\Pi$ montre
que $\Pi^{\rm alg}$ est dense dans $\Pi$ si et seulement si ${\rm lg}(\Pi^{\rm alg})={\rm lg}(\Pi)$,
et que, si c'est le cas, ${\rm lg}(W^{\rm alg})={\rm lg}(W)$  pour tout sous-quotient $W$ de $\Pi$.
\end{rema}

\begin{proof}
La preuve du cas $V=V_0$ que l'on trouve
 dans~\cite{poids} s'\'etend presque verbatim au cas $n$ quelconque. Nous allons en esquisser
les grandes lignes.

Le cas $V=V_0$ permet de supposer $V_0$ de Rham, \`a poids de Hodge-Tate $k_1<k_2$.
Soient $\Delta:=D_{\rm rig}(V)$ et $\Delta_0:=D_{\rm rig}(V_0)$.
Comme ${\rm End}\,V=E$, on a
${\rm End}\,\Delta=E$, ce qui 
permet d'adapter la preuve de~\cite[prop.\,2.2]{poids}
pour obtenir $A_1,A_2\in E$ v\'erifiant $A_1+A_2=k_1+k_2$ (car $\det V=\delta$)
et $(\nabla-A_1)(\nabla-A_2)\Delta\subset t\Delta$ (on a donc
$A_i=k_i+P_i$ avec $P_i\in {\goth m}_E$, si $i=1,2$).
La preuve de \cite[th.\,2.15]{poids} montre que le module qui permet
de d\'ecrire 
(au moins en tant que module sous l'action du borel) 
les vecteurs localement alg\'ebriques 
de $\bPi(V)$ est $(\Delta_{\rm dif}^-)^{U({\goth g}){\text{-fini}}}$;
plus pr\'ecis\'ement, en adaptant cette preuve, on montre que le
$L_\infty[t]$-module $(\Delta_{\rm dif}^-)^{U({\goth g}){\text{-fini}}}$
est de longueur $(k_2-k_1){\rm lg}(\bPi(V)^{\rm alg})$.  La preuve de~\cite[prop.\,2.7]{poids}
montre alors que ${\rm lg}_{L_\infty[t]}((\Delta_{\rm dif}^-)^{U({\goth g}){\text{-fini}}})$
est la longueur du quotient par le plus grand sous-$L_\infty[[t]]$-r\'eseau $N$ de $\Delta^+_{\rm dif}$
v\'erifiant $(\nabla-A_1)(\nabla-A_2)N\subset tN$.

Il existe une base $e_{0,1},e_{0,2}$ de $\Delta_{0,{\rm dif}}^+$ v\'erifiant
$\nabla(e_{0,1})=k_1e_{0,1}$ et $\nabla e_{0,2}=k_2e_{0,2}$.
Les seuls sous-$L_\infty[[t]]$-modules d'indice fini de $\Delta_{0,{\rm dif}}^+$ v\'erifiant
$(\nabla-k_1)(\nabla-k_2)N\subset tN$ sont $\Delta_{0,{\rm dif}}^+$ et 
le module $N_{0,{\rm dif}}$ engendr\'e par $e_{0,2}$ et $t^{k_2-k_1}e_{0,1}$.

On peut relever $e_{0,1},e_{0,2}$ en une base $e_1,e_2$
de $\Delta_{{\rm dif}}^+$ v\'erifiant
$\nabla(e_{1})=A_1e_{1}$ 
et $\nabla(e_{2})=A_2e_{2}+Bt^{k_2-k_1}e_1$
avec $B\in L_\infty\otimes_L{\goth m}_E$. 
La repr\'esentation $V$ est de Rham si et seulement si $A_1=k_1$, $A_2=k_2$ et $B=0$.

Posons $L_{E,\infty}:= L_\infty\otimes E$.
Les seuls sous-$L_{E,\infty}[[t]]$-modules d'indice fini de $\Delta_{{\rm dif}}^+$ v\'erifiant
$(\nabla-A_1)(\nabla-A_2)N\subset (t,{\goth m}_E)N$ sont
les $N_{\rm dif}+ \Lambda\Delta_{{\rm dif}}^+$, o\`u
$N_{{\rm dif}}$ est le module engendr\'e par $e_{2}$ et $t^{k_2-k_1}e_{1}$ et $\Lambda$ est 
un id\'eal de $E$.
Parmi ceux-ci, le seul qui peut fournir des vecteurs localement alg\'ebriques de la bonne
longueur est $N_{\rm dif}$, i.e.~$\bPi(V)^{\rm alg}$ est dense dans $\bPi(V)$
si et seulement si  $(\nabla-A_1)(\nabla-A_2)N_{\rm dif}\subset tN_{\rm dif}$.
Cette derni\`ere condition \'equivaut \`a $(\nabla-A_2)t^{k_2-k_1}e_1\in tN_{\rm dif}$
et $(\nabla-A_2)e_2\in tN_{\rm dif}$. 

\quad $\bullet$ La premi\`ere des deux conditions \'equivaut
\`a 
$A_1-A_2+k_2-k_1=0$, et comme $A_1+A_2=k_1+k_2$, cela \'equivaut \`a $A_1=k_1$ et $A_2=k_2$.

\quad $\bullet$ La seconde \'equivaut \`a $B=0$.

La conjonction des deux conditions \'equivaut donc \`a ce que $V$ soit de Rham,
ce qui permet de conclure.
\end{proof}

\Subsection{D\'eformations infinit\'esimales de $V_{M,{\cal L}}^{[a,b]}$}\label{defo4}
Soit $M_{\rm dR}$ le $L$-module de rang~$2$ d\'efini par:
$$M_{\rm dR}:=(\Qbar_p\otimes_{\Q_p}M)^{\G_{\Q_p}}$$
Fixons ${\cal L}\in\piqp(M_{\rm dR})(\Qbar_p)$ et soit $L_{\cal L}$
le corps de d\'efinition de ${\cal L}$. 
Choisissons une base $e_1,e_2$ de $M_{\rm dR}$ sur $L$, 
avec $e_1\notin{\cal L}$.
Alors il existe $z({\cal L})\in\Qbar_p$ tel que ${\cal L}$ soit la droite
engendr\'ee par $e_2+z({\cal L}) e_1$, 
et $L_{\cal L}=L(z({\cal L}))$.

Si $n\geq 0$, soit $M^{[a,b]}_{{\cal L},n}$ le $(\varphi,N,\G_{\Q_p})$-module filtr\'e,
de rang $2$ sur $L_{\cal L}[T]/T^{n+1}$, d\'efini par
$$M^{[a,b]}_{{\cal L},n}=(L_{\cal L}[T]/T^{n+1})\otimes M$$
 en tant que $(\varphi,N,\G_{\Q_p})$-module,
la filtration ${\rm Fil}^\bullet_{{\cal L},n}$
sur $$(\Qbar_p\otimes M^{[a,b]}_{{\cal L},n})^{\G_{\Q_p}}=(L_{\cal L}[T]/T^{n+1})\otimes M_{\rm dR}$$
\'etant d\'efinie par
$${\rm Fil}^i_{{\cal L},n}=
\begin{cases} (L_{\cal L}[T]/T^{n+1})\otimes M_{\rm dR} &{\text{si $i\leq -b$,}}\\
(L_{\cal L}[T]/T^{n+1})\cdot (e_2+(z({\cal L})+T)e_1) &{\text{si $-b+1\leq i\leq -a$,}}\\
0 & {\text{si $i\geq -a+1$.}}
\end{cases}$$
Notons que le sous-module $T^kM^{[a,b]}_{{\cal L},n}$ de $M^{[a,b]}_{{\cal L},n}$
est isomorphe \`a $M^{[a,b]}_{{\cal L},n-k}$ comme $(\varphi,N,\G_{\Q_p})$-module filtr\'e
(sur $L_{\cal L}[T]/T^{n+1}$ et donc a fortiori sur $L$).
\begin{lemm}\label{defo250}
Si $n\geq 1$, alors:

{\rm (i)}
$M^{[a,b]}_{{\cal L},n}$, vu comme $L_{\cal L}$-$(\varphi,N,\G_{\Q_p})$-module filtr\'e,
 est faiblement admissible. 

{\rm (ii)} Les seuls sous-$L_{\cal L}$-$(\varphi,N,\G_{\Q_p})$-modules filtr\'es
faiblement admissibles de $M^{[a,b]}_{{\cal L},n}$ sont les $T^kM^{[a,b]}_{{\cal L},n}$, 
pour $0\leq k\leq n$.
\end{lemm}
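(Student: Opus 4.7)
\textit{Proof proposal.} The plan is to parametrize the sub-$(\varphi,N,\G_{\Q_p})$-modules and then reduce weak admissibility to a linear-algebra condition on an $L_{\cal L}$-subspace of $L_{\cal L}[T]/T^{n+1}$.

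First I will use that $M$ is absolutely irreducible on inertia, so $N=0$ throughout, and only the $(\varphi,\G_{\Q_p})$-structure matters. Write $M_L:=L_{\cal L}\otimes_L M$, which is an absolutely simple $(\varphi,\G_{\Q_p})$-module with ${\rm End}_{(\varphi,\G_{\Q_p})}(M_L)=L_{\cal L}$. Ignoring the $L_{\cal L}[T]/T^{n+1}$-module structure, one has $M^{[a,b]}_{{\cal L},n}\simeq M_L^{\oplus (n+1)}$ as a $(\varphi,\G_{\Q_p})$-module; absolute simplicity of $M_L$ then gives a bijection $V\mapsto V\otimes_{L_{\cal L}}M_L$ between $L_{\cal L}$-subspaces $V\subset L_{\cal L}[T]/T^{n+1}$ and sub-$(\varphi,\G_{\Q_p})$-modules of $M^{[a,b]}_{{\cal L},n}$. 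The resulting $D':=V\otimes_{L_{\cal L}}M_L$ has $L_{\cal L}$-rank $2m$ where $m:=\dim_{L_{\cal L}}V$.

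Next I will compute the Newton and Hodge numbers in terms of $V$. Since $t_N$ is additive and insensitive to the filtration, while the unitarity of $\delta_M^{[a,b]}$ (equivalently, weak admissibility of $M^{[a,b]}_{{\cal L},0}$) forces $t_N(M)=-(a+b)$, one gets $t_N(D')=-m(a+b)$. The induced filtration on $D'_{\rm dR}=V\cdot e_1\oplus V\cdot e_2$ jumps only at $-b$ and $-a$, so $t_H(D')=-2mb+(b-a)d$ where $d:=\dim_{L_{\cal L}}(D'_{\rm dR}\cap{\rm Fil}^{-b+1}_{{\cal L},n})$. Using the description ${\rm Fil}^{-b+1}_{{\cal L},n}=(L_{\cal L}[T]/T^{n+1})\cdot(e_2+(z({\cal L})+T)e_1)$, a direct identification yields
$$d=\dim_{L_{\cal L}}\{Q\in V:(z({\cal L})+T)Q\in V\}.$$
The crucial estimate is $d\le m$, which is immediate from the kernel--image relation for the $L_{\cal L}$-linear map $V\to(L_{\cal L}[T]/T^{n+1})/V$ sending $Q\mapsto(z({\cal L})+T)Q\bmod V$. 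Substituting into the above gives $t_H(D')-t_N(D')=(b-a)(d-m)\le 0$ for every sub, and equality when $D'=M^{[a,b]}_{{\cal L},n}$ (so $m=d=n+1$), which settles (i).

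For (ii), a weakly admissible sub $D'$ must satisfy $t_H(D')=t_N(D')$, i.e.\ $d=m$, which forces the above map to vanish, i.e.\ $(z({\cal L})+T)V\subset V$. Since $z({\cal L})\in L_{\cal L}$ is a scalar this is equivalent to $TV\subset V$, and since $T$ acts on $L_{\cal L}[T]/T^{n+1}$ as a single nilpotent Jordan block, its invariant subspaces are exactly the ideals $T^k(L_{\cal L}[T]/T^{n+1})$, $0\le k\le n+1$, whence $D'=T^kM^{[a,b]}_{{\cal L},n}$. I expect the only mildly delicate step to be the parametrization of sub-$(\varphi,\G_{\Q_p})$-modules by subspaces $V$, which rests on absolute irreducibility of $M_L$; the rest is elementary filtration bookkeeping.
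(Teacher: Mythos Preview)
Your argument is correct and follows essentially the same route as the paper's proof. Both identify the sub-$(\varphi,N,\G_{\Q_p})$-modules as $V\otimes M$ for $L_{\cal L}$-subspaces $V\subset L_{\cal L}[T]/T^{n+1}$ (using absolute irreducibility of $M$), reduce weak admissibility to the inequality $\dim\{P\in V:\ TP\in V\}\le\dim V$, and conclude that equality forces $V$ to be $T$-stable and hence an ideal $T^k(L_{\cal L}[T]/T^{n+1})$. The only cosmetic difference is that the paper first normalizes to $z({\cal L})=0$ (replacing $e_2$ by $e_2+z({\cal L})e_1$ and $L$ by $L_{\cal L}$) and then states the key inequality directly, whereas you keep $z({\cal L})$ general and compute $t_N$ and $t_H$ explicitly before arriving at the same inequality.
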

\begin{proof}
Quitte \`a remplacer $L$ par $L_{\cal L}$, et $e_2$ par $e_2+z({\cal L}) e_1$, on peut supposer
que ${\cal L}$ est d\'efinie sur $L$ et 
que $z({\cal L})=0$.

Les seuls sous-$L$-$(\varphi,N,\G_{\Q_p})$-modules de $M^{[a,b]}_{{\cal L},n}$ sont
les $\Lambda\otimes_L M$, o\`u $\Lambda$ est un sous-$L$-module de $L_{\cal L}[T]/T^{n+1}$.
Comme la filtration n'a que deux crans, le (i) \'equivaut \`a ce que
$$\dim_L\big((\Lambda\otimes_L M_{\rm dR})\cap {\rm Fil}^{-a}_{{\cal L},n}\big)\leq \dim_L\Lambda$$
pour tout $\Lambda$, et le (ii) \`a ce que les seuls $\Lambda$ pour lesquels il y a \'egalit\'e
sont les $T^k(L[T]/T^{n+1-k})$.
Comme ${\rm Fil}^{-a}_{{\cal L},n}$ est l'ensemble des $Pe_2+TPe_1$,
l'intersection ci-dessus est l'ensemble des $Pe_2+TPe_1$ avec $P\in\Lambda$ et $TP\in\Lambda$.
L'in\'egalit\'e dans le (i) est donc une \'evidence et, s'il y a \'egalit\'e, c'est
que $\Lambda$ est stable par $P\mapsto TP$, et donc est un id\'eal de $L[T]/T^{n+1-k}$,
et donc de la forme $T^k(L[T]/T^{n+1-k})$,
ce qui permet de conclure.
\end{proof}

Le (i) du lemme~\ref{defo250} fournit (gr\^ace \`a~\cite{CF}) une repr\'esentation
$$V_{M,{\cal L},n}^{[a,b]}:={\bf V}_{\rm st}(M^{[a,b]}_{{\cal L},n})$$
C'est une d\'eformation \`a $L_{\cal L}[T]/T^{n+1}$
de $V_{M,{\cal L}}^{[a,b]}$, qui est de Rham, dont la r\'eduction modulo $T^2$ est non scind\'ee
(car la filtration n'est pas d\'efinie sur $L_{\cal L}$ modulo $T^2$),
et dont, d'apr\`es le (ii) du lemme~\ref{defo250},
 les seuls sous-objets sont les $T^kV_{M,{\cal L},n}^{[a,b]}$ (isomorphe \`a 
$V_{M,{\cal L},n-k}^{[a,b]}$), pour $0\leq k\leq n$.
\begin{rema}\label{petit2}
(i) En adaptant les m\'ethodes de~\cite[\S\,5.1]{triangulines}, on peut construire
une d\'eformation de Rham de $V_{M,{\cal L}}^{[a,b]}$ dans un voisinage $p$-adique,
et pas juste dans un voisinage formel.

(ii) Le th.\,\ref{petit15} et le lemme~\ref{petit14} ci-dessous donnent une construction
d'une d\'eformation au-dessus de tout l'espace des repr\'esentations
 de Rham de type $(M,a,b)$.
\end{rema}

Soit ${\cal B}$ le bloc correspondant \`a la r\'eduction de $V_{M,{\cal L}}^{[a,b]}$.
Alors $V_{M,{\cal L}}^{[a,b]}$ d\'efinit un point de ${\rm Spec}\,R^{{\rm ps},\delta}_{\cal B}[\frac{1}{p}]$;
notons $\widehat R_{\cal L}$ le compl\'et\'e de $R^{{\rm ps},\delta}_{\cal B}[\frac{1}{p}]$ en l'id\'eal
maximal correspondant \`a ce point (isomorphe \`a $L_{\cal L}[[T_1,T_2,T_3]]$), 
et ${\goth m}_{\cal L}$
son id\'eal maximal.
On dispose d'une repr\'esentation $\widehat\rho_{\cal L}:\G_{\Q_p}\to \gl_2(\widehat R_{\cal L})$
universelle pour les d\'eformations de $V_{M,{\cal L}}^{[a,b]}$ aux $L_{\cal L}$-alg\`ebres locales
finies sur $L_{\cal L}$ de corps r\'esiduel $L_{\cal L}$: 
si $E$ est une telle alg\`ebre et si $\rho_E:\G_{\Q_p}\to \gl_2(E)$ a pour repr\'esentation
r\'esiduelle $V_{M,{\cal L}}^{[a,b]}$, alors il existe $\lambda:\widehat R_{\cal L}\to E$ tel que
$\rho_E\cong E\otimes \widehat\rho_{\cal L}$.  De plus,
${\goth m}_{{\cal L}}/{\goth m}_{{\cal L}}^2$ est isomorphe
au groupe des extensions de $V_{M,{\cal L}}^{[a,b]}$ par $V_{M,{\cal L}}^{[a,b]}$ de d\'eterminant $\delta$
(une telle extension est naturellement une $(L_{\cal L}[T]/T^2)$-repr\'esentation de dimension~$2$
et son d\'eterminant est, a priori, \`a valeurs dans $L_{\cal L}[T]/T^2$).

On note $\widehat R_{{\cal L},{\rm dR}}$ le quotient de $\widehat R_{\cal L}$ classifiant
les repr\'esentations de Rham: on a $\widehat R_{{\cal L},{\rm dR}}=\widehat R_{\cal L}/I$,
o\`u $I=\cap\,{\goth a}$ et ${\goth a}$ parcourt les id\'eaux de $\widehat R_{\cal L}$ tels
que $\widehat R_{\cal L}/{\goth a}$ soit de dimension finie sur $L_{\cal L}$
(ce qui \'equivaut \`a ${\goth a}\supset {\goth m}_{{\cal L}}^n$, pour $n\gg0$),
et $(\widehat R_{\cal L}/{\goth a})\otimes \widehat\rho_{\cal L}$ soit de Rham.

\begin{prop}\label{defo5.1}
$\widehat R_{{\cal L},{\rm dR}}\cong L_{\cal L}[[T]]$.
\end{prop}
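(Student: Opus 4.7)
The plan is to construct an explicit surjective $L_{\cal L}$-algebra map $\phi\colon \widehat R_{{\cal L},{\rm dR}}\twoheadrightarrow L_{\cal L}[[T]]$ using the family $\{V_{M,{\cal L},n}^{[a,b]}\}_{n\geq 0}$ of \S\,\ref{defo4}, then to bound the cotangent space of $\widehat R_{{\cal L},{\rm dR}}$ by $1$ via a Bloch--Kato computation of $\dim H^1_g(G_{\Q_p},{\rm ad}^0 V_{M,{\cal L}}^{[a,b]})$, and finally to conclude by a standard commutative-algebra argument.

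To produce $\phi$: the lemme~\ref{defo250}(i) says that each $V_{M,{\cal L},n}^{[a,b]}$ is a de Rham deformation of $V:=V_{M,{\cal L}}^{[a,b]}$ over $L_{\cal L}[T]/T^{n+1}$ with determinant $\delta$. Via the universal property of $\widehat R_{\cal L}$ (which classifies honest deformations thanks to absolute irreducibility of $V$), each such deformation induces a morphism $\widehat R_{\cal L}\to L_{\cal L}[T]/T^{n+1}$ factoring through $\widehat R_{{\cal L},{\rm dR}}$, and compatibility in $n$ (via $V_{M,{\cal L},n}^{[a,b]}/T^{n}\cong V_{M,{\cal L},n-1}^{[a,b]}$) yields $\phi$ by passing to the projective limit. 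Surjectivity is checked modulo $T^2$: the lemme~\ref{defo250}(ii) shows that $V_{M,{\cal L},1}^{[a,b]}$ is a non-split self-extension of $V$ (it has a unique submodule isomorphic to $V$), so its class in ${\goth m}_{{\cal L},{\rm dR}}/{\goth m}_{{\cal L},{\rm dR}}^{2}$ is non-nulle, and Nakayama then gives the surjectivity of $\phi$.

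To bound the cotangent space, one identifies it with $H^1_g(G_{\Q_p},{\rm ad}^0 V)^*$. Since $V$ is absolutely irreducible, $\dim H^0({\rm ad}\,V)=1$; since $V\not\cong V(1)$ (otherwise $\chi_{\rm cyc}^2=1$ by comparing determinants), one has $\dim H^0({\rm ad}\,V(1))=\dim{\rm Hom}_{G_{\Q_p}}(V,V(1))=0$. The Hodge--Tate weights of ${\rm ad}\,V$ being $\{a-b,0,0,b-a\}$, one has $\dim ({\rm ad}\,V)_{\rm dR}/{\rm Fil}^0=1$. The Bloch--Kato formula
$$\dim H^1_g(W)=\dim H^0(W)+\dim H^0(W^*(1))+\dim W_{\rm dR}/{\rm Fil}^0$$
applied to $W={\rm ad}\,V$ and to $W=L_{\cal L}$ gives $\dim H^1_g({\rm ad}\,V)=2$ and $\dim H^1_g(L_{\cal L})=1$; the trace decomposition ${\rm ad}\,V={\rm ad}^0 V\oplus L_{\cal L}$ then yields $\dim H^1_g({\rm ad}^0 V)=1$.

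Finally, $\widehat R_{{\cal L},{\rm dR}}$ is a noetherian complete local $L_{\cal L}$-algebra (quotient of $\widehat R_{\cal L}\cong L_{\cal L}[[T_1,T_2,T_3]]$) with cotangent space of dimension $\leq 1$, hence a quotient of $L_{\cal L}[[T]]$; the composite $L_{\cal L}[[T]]\twoheadrightarrow \widehat R_{{\cal L},{\rm dR}}\stackrel{\phi}{\twoheadrightarrow} L_{\cal L}[[T]]$ is a surjective endomorphism of a noetherian ring, hence an isomorphism, and so is $\phi$. The main obstacle is establishing the Bloch--Kato dimension formula in the potentially crystalline, non-crystalline regime (here $V$ is potentially crystalline but not crystalline, as $M$ is supercuspidal with $N=0$); this is classical but delicate, and relies on the fundamental exact sequence relating $H^1_g$, $V_{\rm dR}$, and the semistable periods.
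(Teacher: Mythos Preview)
Your strategy is essentially the paper's: both arguments (a) use the family $\{V_{M,{\cal L},n}^{[a,b]}\}_n$ to produce compatible surjections $\widehat R_{{\cal L},{\rm dR}}\twoheadrightarrow L_{\cal L}[T]/T^{n+1}$ (surjectivity checked at the first step via non-splitness), and (b) bound the cotangent space by~$1$, then conclude. For~(b) the paper simply invokes \cite[prop.\,7.17]{D-dR} (uniqueness of the nontrivial de Rham self-extension with determinant~$\delta$); you recompute this via Bloch--Kato, which is a legitimate alternative.

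There is, however, a slip in your Bloch--Kato formula. The correct statement is
$$\dim H^1_g(W)=\dim H^0(W)+\dim D_{\rm dR}(W)/{\rm Fil}^0+\dim D_{\rm cris}(W^*(1))^{\varphi=1},$$
i.e.\ the third term is $\dim D_{\rm cris}(W^*(1))^{\varphi=1}$, not $\dim H^0(W^*(1))$; the latter injects into the former but they differ in general. Your numerical conclusion is nevertheless correct, because here both terms vanish: since $M$ is absolutely irreducible on inertia, ${\rm ad}^0(M|_I)$ has no trivial summand, so the inertial type of ${\rm ad}^0 V(1)$ has no invariants and $D_{\rm cris}({\rm ad}^0 V(1))=0$. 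Once you add this observation (and fix the formula) the argument goes through cleanly; in fact $H^1_g=H^1_f$ in this situation, so the standard formula for $H^1_f$ already suffices and your closing caveat about the ``delicate non-crystalline regime'' becomes unnecessary.
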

\begin{proof}
Comme $V_{M,{\cal L},2}^{[a,b]}$ est, \`a isomorphisme pr\`es, l'unique \cite[prop.\,7.17]{D-dR}
extension non triviale de $V_{M,{\cal L}}^{[a,b]}$ par $V_{M,{\cal L}}^{[a,b]}$, de d\'eterminant $\delta$,
qui est de Rham, il s'ensuit que $\widehat R_{{\cal L},{\rm dR}}$ est un anneau local r\'egulier 
de dimension~$1$,
et donc est un quotient
de $L_{\cal L}[[T]]$. Par ailleurs, l'existence de $V_{M,{\cal L},n}^{[a,b]}$, pour tout $n$, 
fournit une suite compatible de morphismes $\widehat R_{{\cal L},{\rm dR}}\to L_{\cal L}[T]/T^{n+1}$,
pour $n\in\N$. Le fait que $V_{M,{\cal L},2}^{[a,b]}$ soit non scind\'ee implique que cette
fl\`eche est surjective pour $n=2$, et donc aussi pour tout $n$. 
Il s'ensuit 
que $\widehat R_{{\cal L},{\rm dR}}$ admet pour quotient $L_{\cal L}[T]/T^{n+1}$, pour tout $n$.
On en d\'eduit le r\'esultat.
\end{proof}

\begin{rema}\label{defo5}
On d\'eduit de la prop.\,\ref{defo5.1} que $V_{M,{\cal L},n}^{[a,b]}$ est, \`a isomorphisme pr\`es,
 l'unique d\'eformation de Rham de $V_{M,{\cal L}}^{[a,b]}$ \`a $L_{\cal L}[T]/T^{n+1}$, de d\'eterminant $\delta$,
dont le quotient par $T^2$ soit non scind\'e.
\end{rema}
\Subsection{D\'eformations infinit\'esimales de $\Pi_{M,{\cal L}}^{[a,b]}$}\label{defo6}
En appliquant les constructions du \S\,\ref{defo1} \`a $V=V^{[a,b]}_{M,{\cal L},n}$,
on construit une repr\'esentation 
$$\Pi^{[a,b]}_{M,{\cal L},n}:=\bPi(V^{[a,b]}_{M,{\cal L},n})$$
\begin{rema}\label{petit3}
(i)
Il r\'esulte du th.\,\ref{defo2} que $(\Pi^{[a,b]}_{M,{\cal L},n})^{\rm alg}$ est
dense dans $\Pi^{[a,b]}_{M,{\cal L},n}$ et on d\'eduit du (ii) de la rem.\,\ref{defo3}
que 
$$(\Pi^{[a,b]}_{M,{\cal L},n})^{\rm alg}\cong (L_{\cal L}[T]/T^{n+1})\otimes {\rm LL}^{[a,b]}(M)$$
En effet, les deux membres ont m\^eme longueur d'apr\`es la rem.\,\ref{defo3}, 
sont des $L_{\cal L}[T]/T^{n+1}$-modules, et $T^n$ induit un isomorphisme de $\Pi^{[a,b]}_{M,{\cal L},n}/T$
sur $T^n\Pi^{[a,b]}_{M,{\cal L},n}$ (tous les deux isomorphes \`a $\Pi^{[a,b]}_{M,{\cal L}}$).
Si on choisit ${\rm LL}^{[a,b]}(M)\to (\Pi^{[a,b]}_{M,{\cal L},n})^{\rm alg}$
tel que la compos\'ee avec la projection sur $(\Pi^{[a,b]}_{M,{\cal L},n}/T)^{\rm alg}$ soit non nul
(possible, d'apr\`es la rem.\,\ref{defo3}), alors
l'application
naturelle $(L_{\cal L}[T]/T^{n+1})\otimes {\rm LL}^{[a,b]}(M)\to (\Pi^{[a,b]}_{M,{\cal L},n})^{\rm alg}$
est injective (car injective sur $T^n$ d'apr\`es ce qui pr\'ec\`ede) 
et donc un isomorphisme pour des raisons de longueur.

(ii) En fait, si $\lambda\in (L_{\cal L}[T]/T^{n+1})$ est inversible, alors $\lambda\cdot {\rm LL}^{[a,b]}(M)$
est d\'ej\`a dense dans $\Pi^{[a,b]}_{M,{\cal L},n}$ car les seuls sous-objets stricts de
$\Pi^{[a,b]}_{M,{\cal L},n}$ sont les $T^k\Pi^{[a,b]}_{M,{\cal L},n}$ pour $1\leq k\leq n-1$
(cela se voit en appliquant le foncteur $\Pi\mapsto{\bf V}(\Pi)$ et en utilisant
le fait que les seuls sous-objets stricts de
$V^{[a,b]}_{M,{\cal L},n}$ sont les $T^kV^{[a,b]}_{M,{\cal L},n}$, cf.~lemme\,\ref{defo250} et
d\'efinition de $V^{[a,b]}_{M,{\cal L},n}$),
et $\lambda\cdot {\rm LL}^{[a,b]}(M)$ n'est inclus dans aucun.
Il r\'esulte donc de la prop.\,\ref{A12} que $\Pi^{[a,b]}_{M,{\cal L},n}$ est un quotient
de $\widehat{\rm LL}\hskip0mm^{[a,b]}(M)$.
\end{rema}

\begin{prop}\label{defo5.2}
Si $\Pi$ est une repr\'esentation unitaire de $G$, de caract\`ere central~$\delta$,
 munie d'un morphisme ${\rm LL}^{[a,b]}(M)\to\Pi$
d'image dense, et si les composantes de Jordan-H\"older de $\Pi$ sont $\Pi^{[a,b]}_{M,{\cal L}}$
avec multiplicit\'e~$n$, alors $\Pi\cong \Pi^{[a,b]}_{M,{\cal L},n}$.
\end{prop}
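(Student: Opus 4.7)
Le plan est d'appliquer le foncteur ${\bf V}$, de montrer que $V := {\bf V}(\Pi)$ est de Rham, d'identifier $V$ avec la d\'eformation explicite construite au \S\,\ref{defo4}, puis de redescendre l'isomorphisme au c\^ot\'e $G$. Pour la premi\`ere \'etape: comme ${\rm LL}^{[a,b]}(M)\to \Pi$ est d'image dense et que ${\rm LL}^{[a,b]}(M)$ est localement alg\'ebrique, $\Pi^{\rm alg}$ est dense dans $\Pi$. Appliquant la rem.\,\ref{defo3}(ii) avec $\Pi_0 := \Pi^{[a,b]}_{M,{\cal L}}$ (irr\'eductible, puisque $V^{[a,b]}_{M,{\cal L}}$ est absolument irr\'eductible), cette densit\'e \'equivaut \`a l'\'egalit\'e ${\rm lg}(\Pi^{\rm alg}) = {\rm lg}(\Pi)$ et se propage \`a tout sous-quotient $W$ de $\Pi$. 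Le th.\,\ref{defo2}, appliqu\'e \`a chaque sous-quotient, entra\^{\i}ne alors que $V$ est de Rham \`a poids de Hodge-Tate $a, b$.

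Dans un deuxi\`eme temps, j'identifierais $V$ avec la d\'eformation convenable $V^{[a,b]}_{M,{\cal L},n}$. Par le th.\,\ref{pasku6}, la trace de $V$ fournit une action de $R^{{\rm ps},\delta}_{\cal B}$ sur $V$; comme toutes les composantes de Jordan-H\"older de $V$ sont $V^{[a,b]}_{M,{\cal L}}$, cette action se factorise par le compl\'et\'e $\widehat R_{\cal L}$. L'hypoth\`ese de Rham factorise encore cette action par le quotient $\widehat R_{{\cal L},{\rm dR}} \cong L_{\cal L}[[T]]$ (prop.\,\ref{defo5.1}), qui est un anneau de valuation discr\`ete. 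Tout module de longueur finie sur un tel anneau est somme directe de troncations cycliques $L_{\cal L}[[T]]/T^{m_i}$; pour exclure les sommes directes propres, j'utiliserais que $\Pi$ est ind\'ecomposable, puisque toute d\'ecomposition $\Pi = \Pi_1 \oplus \Pi_2$ non triviale obligerait l'image dense de l'irr\'eductible ${\rm LL}^{[a,b]}(M)$ \`a tomber dans un seul facteur, ce qui contredit la densit\'e. Ainsi $V$ correspond \`a un unique module cyclique sur $L_{\cal L}[[T]]$, et la rem.\,\ref{defo5} garantit qu'une telle d\'eformation de Rham ind\'ecomposable \`a longueur de Jordan-H\"older fix\'ee est isomorphe \`a l'explicite $V^{[a,b]}_{M,{\cal L},n}$.

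Enfin, il reste \`a redescendre l'isomorphisme au niveau des repr\'esentations de $G$. Par la prop.\,\ref{A12} (et l'argument de la rem.\,\ref{petit3}(ii) pour $\Pi^{[a,b]}_{M,{\cal L},n}$), $\Pi$ et $\Pi^{[a,b]}_{M,{\cal L},n}$ sont tous deux quotients de $\widehat{\rm LL}\hskip0mm^{[a,b]}(M)$, et leurs images par ${\bf V}$ co\"{\i}ncident d'apr\`es l'\'etape pr\'ec\'edente. En invoquant l'anti-\'equivalence de Gabriel-{\paskunas} de \S\,\ref{qc3} sur le bloc ${\cal B}$, combin\'ee avec la construction de $\bPi$ au \S\,\ref{defo1} qui fournit un quasi-inverse sur les d\'eformations de $V^{[a,b]}_{M,{\cal L}}$, l'\'egalit\'e ${\bf V}(\Pi) \cong {\bf V}(\Pi^{[a,b]}_{M,{\cal L},n})$ entra\^{\i}ne $\Pi \cong \Pi^{[a,b]}_{M,{\cal L},n}$. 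L'obstacle principal sera la deuxi\`eme \'etape: contr\^oler rigoureusement la structure de $V$ comme $\widehat R_{{\cal L},{\rm dR}}$-module, accorder la multiplicit\'e $n$ \`a l'indice $n$ de $\Pi^{[a,b]}_{M,{\cal L},n}$, et s'assurer que l'argument d'ind\'ecomposabilit\'e suffit \`a forcer la cyclicit\'e du module.
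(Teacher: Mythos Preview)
Your overall strategy --- density of algebraic vectors forces de Rham, then the structure theorem over the discrete valuation ring $\widehat R_{{\cal L},{\rm dR}}\cong L_{\cal L}[[T]]$ reduces to a direct sum of cyclic pieces, and finally the dense irreducible image rules out more than one summand --- is exactly the paper's. But there is a genuine confusion in your second step that makes the argument fail as written: you apply the structure theorem to $V={\bf V}(\Pi)$ itself, hoping indecomposability of $\Pi$ will force $V$ to be a single cyclic $L_{\cal L}[[T]]$-module. This is impossible, because the target object $V^{[a,b]}_{M,{\cal L},n}$ is free of rank~$2$ over $L_{\cal L}[T]/T^{n+1}$, hence isomorphic to $(L_{\cal L}[[T]]/T^{n+1})^{\oplus 2}$ as an $L_{\cal L}[[T]]$-module --- it is \emph{never} cyclic. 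The module to which the structure theorem must be applied has length~$n$ (the Jordan--H\"older multiplicity of $\Pi$), not $2n$ (the $L_{\cal L}$-dimension of $V$).

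The paper avoids this by working on the $G$-side from the start, via the equivalence of \cite[cor.\,6.16]{PT}: the block of $\Pi^{[a,b]}_{M,{\cal L}}$ in ${\rm Ban}^\delta\,G$ is equivalent to finite-length $\widehat R_{\cal L}$-modules, the functor being $\Pi\mapsto M_\Pi:={\rm Hom}_G(J_{\cal L}^\dual,\Pi^\dual)$ for $J_{\cal L}$ an injective envelope. It is $M_\Pi$ (of length $n$) that one shows is a $\widehat R_{{\cal L},{\rm dR}}$-module and then decomposes as $\oplus_i L_{\cal L}[[T]]/T^{n_i}$, giving $\Pi\cong\oplus_i\Pi^{[a,b]}_{M,{\cal L},n_i}$; a single summand is then forced by $\dim_L{\rm Hom}_G(\Pi,\Pi^{[a,b]}_{M,{\cal L}})\leq 1$ (your indecomposability argument, made precise). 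Note also that th.\,\ref{defo2} is stated only for representations of the form $\bPi(\rho_E)$ with $E$ a local artinian quotient of $\widehat R_{\cal L}$; to invoke it for (cyclic subquotients of) $\Pi$ you already need to know that such subquotients are of this shape, and that is precisely what the Gabriel equivalence of \cite{PT} provides. Once you invoke it, the detour through ${\bf V}$ and the final descent step become superfluous.
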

\begin{proof}
D'apr\`es~\cite[cor.\,6.16]{PT}, la cat\'egorie des repr\'esentations unitaires de longueur finie,
dont toutes les composantes de Jordan-H\"older sont $\Pi^{[a,b]}_{M,{\cal L}}$ est \'equivalence
\`a la cat\'egorie des $\widehat R_{\cal L}$-modules. On peut d\'ecrire le foncteur
r\'ealisant cette \'equivalence par la th\'eorie de Gabriel~\cite{gaby}.
Soit $J_{\cal L}$ l'enveloppe injective de $\Pi^{[a,b]}_{M,{\cal L}}$ dans la cat\'egorie des
limites inductives de repr\'esentations unitaires de $G$. Alors ${\rm End}\,J_{\cal L}=\widehat R_{\cal L}$
d'apr\`es~\cite[cor.\,6.16]{PT} et comme le bloc de $\Pi^{[a,b]}_{M,{\cal L}}$ dans la cat\'egorie
des repr\'esentations unitaires de $G$ n'a qu'un \'el\'ement, on a
$$\Pi\cong({\rm Hom}(J_{\cal L}^\dual,\Pi^\dual)\otimes_{\widehat R_{\cal L}}J_{\cal L}^\dual)^\dual$$
L'hypoth\`ese selon laquelle ${\rm LL}^{[a,b]}(M)\to\Pi$ est d'image dense implique 
a fortiori que $\Pi^{\rm alg}$ est dense dans $\Pi$. Il en r\'esulte, 
d'apr\`es le th.\,\ref{defo2} et la d\'efinition de $\widehat R_{{\cal L},{\rm dR}}$, que 
${\rm Hom}(J_{\cal L}^\dual,\Pi^\dual)$ est en fait un $\widehat R_{{\cal L},{\rm dR}}$-module.
La prop.\,\ref{defo5.1} implique alors que
$${\rm Hom}(J_{\cal L}^\dual,\Pi^\dual)\cong\oplus_iL_{\cal L}[[T]]/T^{n_i}
\quad{\rm et}\quad
\Pi\cong\oplus_{i\in I} \Pi^{[a,b]}_{M,{\cal L},n_i}$$
 o\`u $I$ est un ensemble fini.
On conclut en remarquant que
l'hypoth\`ese selon laquelle ${\rm LL}^{[a,b]}(M)\to\Pi$ est d'image dense implique
que $I$ n'a qu'un seul \'el\'ement: en effet, cette hypoth\`ese permet de d\'eduire que ${\rm Hom}_G(\Pi, \Pi^{[a,b]}_{M,{\cal L}})$ 
est de dimension au plus $1$, mais d'autre part cet espace est de dimension au moins $|I|$.
\end{proof}

\section{Compl\'etion ${\cal B}$-adique et anneaux de Kisin}\label{PETIT11}

\Subsection{Les $R_{M,{\cal B}}^{[a,b]}$-modules ${\rm LL}_{\cal B}^{[a,b]}(M)$ et $\rho^{[a,b]}_{M,{\cal B}}$}\label{PETIT12}
Notons ${I_M(\sigma^+)}_{\cal B}$ le compl\'et\'e ${\cal B}$-adique de $I_M(\sigma^+)$
(cf.~\S\,\ref{PETIT5}).
On d\'efinit alors le compl\'et\'e ${\cal B}$-adique de ${\rm LL}^{[a,b]}(M)$ par:
$${\rm LL}^{[a,b]}_{\cal B}(M):=L\otimes_{\O_L}{I_M(\sigma^+)}_{\cal B}$$
(le r\'esultat ne d\'epend pas du choix de $\sigma^+$).
Posons 
$$\delta=\delta_M^{[a,b]},\quad 
I^+:={I_M(\sigma^+)}_{\cal B}$$
Alors $I^+$ est une limite projective d'objets
$\Pi_i$ de ${\rm Tors}^{\delta}_{\cal B}\,G$.  On a donc une action de
$Z_{\cal B}^{\delta}$ sur $I^+$ et sur ${\rm LL}^{[a,b]}_{\cal B}(M)$.
Par fonctorialit\'e, cela fournit une action de $Z_{\cal B}^{\delta}$ sur
$\rho^{[a,b]}_{M,{\cal B}}$. Cette action co\"{\i}ncide avec l'action naturelle
de~$R^{{\rm ps},\delta}_{\cal B}$, via l'isomorphisme du~th.\,\ref{pasku6}.

On note $R_{M,{\cal B}}^{[a,b],+}$ le quotient \`a travers lequel $Z_{\cal B}^\delta$
agit sur $I^+$ (notons que $R_{M,{\cal B}}^{[a,b],+}$ est sans $p$-torsion puisque
$I^+$ l'est, cf.~cor.\,\ref{petit11}), et on pose
$$R_{M,{\cal B}}^{[a,b]}:=R_{M,{\cal B}}^{[a,b],+}[\tfrac{1}{p}]$$

\begin{rema}\label{A38}
On verra plus loin que $R_{M,{\cal B}}^{[a,b]}$ est l'anneau des fonctions analytiques
born\'ees sur un ouvert strict $U^{\rm an}_{M,{\cal B}}$ de 
la droite projective analytique ${\piqp}(M_{\rm dR})^{\rm an}$, et donc est tr\`es loin
d'\^etre r\'eduit \`a $L$. Or $R_{M,{\cal B}}^{[a,b]}$ agit fid\`element sur ${\rm LL}_{\cal B}^{[a,b]}(M)$,
ce qui contraste avec le fait que
${\rm End}_{L[G]}\widehat{\rm LL}\hskip0mm^{[a,b]}(M)=L$ (prop.\,\ref{A32}).

On peut envisager les choses de la mani\`ere suivante (et il est probable que la th\'eorie de
Dotto-Emerton-Gee~\cite{DGE,DGE2} rende cette vision correcte).
Tous les objets
ci-dessus {\og vivent sur ${\piqp}(M_{\rm dR})$\fg}, i.e.~$\widehat{\rm LL}\hskip0mm^{[a,b]}(M)$ est un faisceau sur
${\piqp}(M_{\rm dR})$
et ${\rm LL}\hskip0mm^{[a,b]}_{\cal B}(M)$ est le localis\'e-compl\'et\'e
de $\widehat{\rm LL}\hskip0mm^{[a,b]}(M)$ en l'ouvert $U^{\rm an}_{M,{\cal B}}$.
Alors 
${\rm End}_{L[G]}\widehat{\rm LL}\hskip0mm^{[a,b]}(M)$ est un fibr\'e 
en droites sur ${\piqp}(M_{\rm dR})$, qui est trivial ce qui explique que 
${\rm End}_{L[G]}\widehat{\rm LL}\hskip0mm^{[a,b]}(M)=L$ (sections globales constantes).
\end{rema}

Posons 
$$\rho^{[a,b]}_{M,{\cal B}}:=
{\bf V}({\rm LL}^{[a,b]}_{\cal B}(M))$$
Alors $R_{M,{\cal B}}^{[a,b],+}$ agit, par fonctorialit\'e de $\Pi\mapsto{\bf V}(\Pi)$,
 sur ${\bf V}(I^+)$ et $R_{M,{\cal B}}^{[a,b]}$
agit sur $\rho^{[a,b]}_{M,{\cal B}}$.
\begin{lemm}\label{petit13}
$\rho^{[a,b]}_{M,{\cal B}}$ est de type fini sur $R_{M,{\cal B}}^{[a,b]}$.
\end{lemm}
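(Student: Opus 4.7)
Mon plan est de ramener l'\'enonc\'e au cor.\,\ref{fini} appliqu\'e \`a $X=I_M(\sigma^+)$. D'abord, je v\'erifierais que $I_M(\sigma^+)$ satisfait les hypoth\`eses du corollaire: c'est une repr\'esentation lisse de $G$, de caract\`ere central $\delta:=\delta_M^{[a,b]}$, et elle est de type fini comme $\O_L[G]$-module car c'est un quotient de ${\rm ind}_{KZ}^G\sigma^+$, repr\'esentation engendr\'ee par une $\O_L$-base (finie) de $\sigma^+$.

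Le cor.\,\ref{fini} donnerait alors directement que
$${\bf V}(I^+)={\bf V}(I_M(\sigma^+)_{\cal B})$$
 est un $R_{\cal B}^{{\rm ps},\delta}$-module de type fini. Ensuite, je rappellerais que, par le th.\,\ref{pasku6} et la rem.\,\ref{pasku1}, l'action naturelle de $R_{\cal B}^{{\rm ps},\delta}$ sur tout objet de ${\rm Tors}_{\cal B}^\delta\,G$ passe par $Z_{\cal B}^\delta$, et donc sur la limite projective $I^+$ aussi. Par d\'efinition de $R_{M,{\cal B}}^{[a,b],+}$ comme le quotient de $Z_{\cal B}^\delta$ \`a travers lequel ce dernier agit fid\`element sur $I^+$, l'action de $Z_{\cal B}^\delta$ sur $I^+$, et donc aussi, par fonctorialit\'e de ${\bf V}$, sur ${\bf V}(I^+)$, se factorise par $R_{M,{\cal B}}^{[a,b],+}$. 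Par cons\'equent, ${\bf V}(I^+)$ est de type fini sur $R_{M,{\cal B}}^{[a,b],+}$.

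Il ne resterait plus qu'\`a inverser $p$: comme ${\rm LL}^{[a,b]}_{\cal B}(M)=L\otimes_{\O_L}I^+$ et comme le foncteur ${\bf V}$ commute \`a $L\otimes_{\O_L}-$, on a $\rho^{[a,b]}_{M,{\cal B}}=L\otimes_{\O_L}{\bf V}(I^+)$, qui est alors de type fini sur $R_{M,{\cal B}}^{[a,b]}=R_{M,{\cal B}}^{[a,b],+}[\tfrac1p]$, ce qui conclut. Il n'y a pas vraiment d'obstacle: le r\'esultat est une cons\'equence quasi-formelle de la finitude d\'ej\`a \'etablie dans le cor.\,\ref{fini}, combin\'ee \`a la d\'efinition m\^eme de $R_{M,{\cal B}}^{[a,b],+}$ qui assure que l'action se factorise.
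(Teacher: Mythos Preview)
Your proof is correct and takes essentially the same overall strategy as the paper: establish that ${\bf V}(I^+)$ is of finite type over $R_{\cal B}^{{\rm ps},\delta}$, then observe that this action factors through $R_{M,{\cal B}}^{[a,b],+}$ and invert $p$.

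The difference lies in how the finiteness over $R_{\cal B}^{{\rm ps},\delta}$ is obtained. You invoke cor.\,\ref{fini} directly, which is the clean and economical route: $I_M(\sigma^+)$ is indeed lisse, de type fini, with central character $\delta$, so the corollary applies verbatim. The paper instead proceeds more explicitly: it observes that $I^+$ is a direct factor of $I(\sigma^+)_{\cal B}$, then reduces modulo~$\varpi$ and decomposes $I(k_L\otimes\sigma^+)$ as a successive extension of compact inductions $I(W_{r,\chi})$, finally applying cor.\,\ref{A34} to each piece. This reproves, in this particular case, the content of cor.\,\ref{fini} (the Nakayama-type argument is the same), but with the added bonus of tracking the rank information coming from cor.\,\ref{A34}. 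Your approach is shorter and leverages work already done; the paper's is slightly more informative about what happens modulo~$\varpi$, which foreshadows the Breuil--M\'ezard statements later on.
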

\begin{proof}
$I_M(\sigma^+)$ est un facteur direct de $I(\sigma^+)$ et donc $I^+$ est un facteur
direct de $I(\sigma^+)_{\cal B}$. Il suffit donc de prouver que
${\bf V}(I(\sigma^+)_{\cal B})$
est de type fini sur $R^{{\rm ps},\delta}_{\cal B}$.
On a $k_L\otimes{\bf V}( I(\sigma^+)_{\cal B})={\bf V}(k_L\otimes  I(\sigma^+)_{\cal B})
={\bf V}({I(k_L\otimes\sigma^+)}_{\cal B})$ (les morphismes de transitions entre les objets
entrant dans la d\'efinition des compl\'et\'es ${\cal B}$-adiques sont surjectifs, et donc les
${\rm R}^1\lim$ s'annulent). Maintenant $I(k_L\otimes\sigma^+)$ est une extension successive finie
d'induites compactes $I(W)$, o\`u les $W$ sont de la forme $W_{r,\chi}$,
 et le cor.~\ref{A34} implique que $k_L\otimes{\bf V}( I(\sigma^+)_{\cal B})$ est de type fini 
sur $R^{{\rm ps},\delta}_{\cal B}$.
Il en est donc de m\^eme de ${\bf V}( I(\sigma^+)_{\cal B})$.
\end{proof}

\Subsection{Sp\'ecialisation en un point de ${\rm Spec}\,R_{M,{\cal B}}^{[a,b]}$}\label{PETIT13}
On note $U_{M,{\cal B}}$ l'ensemble des ${\cal L}\in{\piqp}(M_{\rm dR})(\Qbar_p)$ tels
que $\Pi_{M,{\cal L}}\in {\rm Ban}^\delta_{\cal B}\,G$; c'est aussi l'ensemble
des ${\cal L}$ tels que $V_{M,{\cal L}}$ ait pour r\'eduction $\rho_{\cal B}$. 
Comme nous le verrons,
$U_{M,{\cal B}}$ est l'ensemble des points classiques d'un ouvert analytique
$U_{M,{\cal B}}^{\rm an}$ de la droite projective ${\piqp}(M_{\rm dR})^{\rm an}$.
Soient 
$$X={\rm Spec}\,R^{{\rm ps},\delta}_{\cal B}[\tfrac{1}{p}],\quad X_M={\rm Spec}\,R_{M,{\cal B}}^{[a,b]}$$
On note aussi $X^{\rm an}$ et $X_M^{\rm an}$ les espaces analytiques associ\'es
\`a $X$ et $X_M$: $X^{\rm an}$ est de dimension~$3$ et, comme nous le verrons,
$X_M^{\rm an}$ est de dimension~$1$ (isomorphe \`a l'ouvert $U_{M,{\cal B}}^{\rm an}$
de ${\piqp}(M_{\rm dR})^{\rm an}$).

On note ${\rm Ban}^\delta\,G$ la cat\'egorie des $L[G]$-banachs unitaires
de caract\`ere central $\delta$, de longueur finie.
On note ${\rm Ban}^\delta_{\cal B}\,G$ la sous-cat\'egorie
de ${\rm Ban}^\delta\,G$ des $\Pi$ dont la r\'eduction est un objet de ${\rm Tors}_{\cal B}^\delta$.
D'apr\`es {\paskunas}~\cite{Paskext}, tout objet $\Pi$ de ${\rm Ban}^\delta_{\cal B}\,G$
a une d\'ecomposition $\Pi=\oplus_x\Pi_x$, o\`u la somme porte sur $x\in X$ ferm\'e,
$\Pi_x=0$ sauf pour un nombre fini de $x$ et toutes les composantes
de Jordan-H\"older de $\Pi_x$ sont \'el\'ement du bloc ${\cal B}_x$ de
${\rm Ban}^\delta\,G$ correspondant \`a $x$ (si le pseudo-caract\`ere de $\G_{\Q_p}$
correspondant \`a $x$ est le caract\`ere d'une repr\'esentation irr\'eductible, alors
${\cal B}_x$ a un unique \'el\'ement, si ce pseudo-caract\`ere est $\chi_1\oplus\chi_2$,
ce bloc est constitu\'e des composantes de Jordan-H\"older des induites continues de
$\chi_1\otimes\chi_2\epsilon^{-1}$ et $\chi_2\otimes\chi_1\epsilon^{-1}$ -- ces repr\'esentations
sont toutes les deux irr\'eductibles sauf si $\chi_2=\chi_1\epsilon^{\pm1}$).

Si $x\in X_M$, notons ${\goth m}_x$, l'id\'eal maximal de $R_{M,{\cal B}}^{[a,b]}$ qui lui est associ\'e,
et $L_x $ le corps r\'esiduel $R_{M,{\cal B}}^{[a,b]}/{\goth m}_x$.
\begin{lemm}\label{petit14}
{\rm (i)} Si $x\in X_M$, il existe ${\cal L}(x)\in U_{M,{\cal B}}$ tel que
$$L_x \otimes_{R_{M,{\cal B}}^{[a,b]}} {\rm LL}^{[a,b]}_{\cal B}(M)\cong\Pi_{M,{\cal L}(x)},\quad
L_x \otimes_{R_{M,{\cal B}}^{[a,b]}} \rho^{[a,b]}_{M,{\cal B}}\cong V_{M,{\cal L}(x)}$$

{\rm (ii)} Plus g\'en\'eralement, si $n\geq 0$, alors
$$(R_{M,{\cal B}}^{[a,b]}/{\goth m}_x^{n+1})\otimes_{R_{M,{\cal B}}^{[a,b]}}{\rm LL}^{[a,b]}_{\cal B}(M)
\cong\Pi_{M,{\cal L}(x),n},
\quad
(R_{M,{\cal B}}^{[a,b]}/{\goth m}_x^{n+1})\otimes_{R_{M,{\cal B}}^{[a,b]}}\rho^{[a,b]}_{M,{\cal B}}
\cong V_{M,{\cal L}(x),n}$$

{\rm (iii)} $x\mapsto {\cal L}(x)$ est une bijection de $X_M$ sur $U_{M,{\cal B}}$.
\end{lemm}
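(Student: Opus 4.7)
My plan is to prove (i), (ii), and (iii) in tandem by identifying the quotients $\Pi_{x,n} := (R_{M,{\cal B}}^{[a,b]}/{\goth m}_x^{n+1}) \otimes_{R_{M,{\cal B}}^{[a,b]}} {\rm LL}^{[a,b]}_{\cal B}(M)$ via Proposition~\ref{defo5.2}. The case $n=0$ will yield (i), varying $n$ will yield (ii), and the correspondence $x \leftrightarrow {\cal L}(x)$ emerging along the way will give (iii).

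The first step is to verify that $\Pi_{x,n}$ is a continuous Banach $G$-representation of finite length in ${\rm Ban}^\delta_{\cal B}\,G$, receiving a dense map from ${\rm LL}^{[a,b]}(M)$. Density is built into the definition of the ${\cal B}$-adic completion of $I_M(\sigma^+)$ and is preserved by the quotient by ${\goth m}_x^{n+1}$. For finite length, Corollary~\ref{fini} gives that $\rho^{[a,b]}_{M,{\cal B}} = {\bf V}({\rm LL}^{[a,b]}_{\cal B}(M))$ is finitely generated over $R^{{\rm ps},\delta}_{\cal B}$, hence over $R_{M,{\cal B}}^{[a,b]}$; consequently ${\bf V}(\Pi_{x,n}) \cong (R_{M,{\cal B}}^{[a,b]}/{\goth m}_x^{n+1}) \otimes \rho^{[a,b]}_{M,{\cal B}}$ is a finite module over the Artinian ring $R_{M,{\cal B}}^{[a,b]}/{\goth m}_x^{n+1}$, hence finite-dimensional over $L_x$. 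Translating back through the \paskunas-Tung equivalence (or the functoriality of ${\bf V}$ on the block category) then forces $\Pi_{x,n}$ to be of finite length.

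The heart of the argument is identifying the Jordan-H\"older factors of $\Pi_{x,n}$. They all lie in the sub-block of ${\rm Ban}^\delta_{\cal B}\,G$ determined by the composite pseudo-character $R^{{\rm ps},\delta}_{\cal B} \to R_{M,{\cal B}}^{[a,b]} \to L_x$. I will argue that the supercuspidality of ${\rm LL}^{[a,b]}(M)$, together with its dense image in $\Pi_{x,n}$, rules out reducible pseudo-characters: a principal-series sub-block would yield JH factors whose algebraic vectors are principal series $\times$ algebraic, incompatible with a dense supercuspidal $\times$ algebraic input. Hence the pseudo-character corresponds to an absolutely irreducible 2-dimensional $L_x$-representation $V$, and by the \paskunas\ classification its sub-block has a unique element. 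Theorem~\ref{defo2} and the density of ${\rm LL}^{[a,b]}(M)$ then pin this unique element down to $\Pi^{[a,b]}_{M,{\cal L}(x)}$ for a unique ${\cal L}(x) \in U_{M,{\cal B}}$ with $V \cong V_{M,{\cal L}(x)}$ (membership in $U_{M,{\cal B}}$ is automatic since $\Pi_{x,n}$ lives in ${\rm Ban}^\delta_{\cal B}$). Proposition~\ref{defo5.2} then gives $\Pi_{x,n} \cong \Pi^{[a,b]}_{M,{\cal L}(x),m}$ for a unique $m$; comparing the chain $\Pi_{x,0} \twoheadleftarrow \Pi_{x,1} \twoheadleftarrow \cdots$ with the filtration of $\Pi^{[a,b]}_{M,{\cal L}(x),\bullet}$, and using Proposition~\ref{defo5.1} which identifies $\widehat R_{{\cal L}(x),{\rm dR}}$ with $L_{\cal L}[[T]]$, will pin down $m = n$. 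Applying ${\bf V}$ will then yield the Galois-side statement, proving (i) and (ii).

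For (iii), injectivity of $x \mapsto {\cal L}(x)$ will follow because ${\cal L}(x)$ determines $V_{M,{\cal L}(x)}$ and hence the pseudo-character, i.e.\ the image $\bar x$ of $x$ in $X$. But $X_M \hookrightarrow X$ is a closed immersion, since $Z^\delta_{\cal B} \twoheadrightarrow R_{M,{\cal B}}^{[a,b],+}$ by definition and $L \otimes_{{\cal O}_L} \iota^\delta_{\cal B}$ is an isomorphism by Theorem~\ref{pasku6}; hence $x$ is determined by $\bar x$. For surjectivity, given ${\cal L} \in U_{M,{\cal B}}$, Theorem~\ref{defo2} provides a dense map ${\rm LL}^{[a,b]}(M) \to \Pi^{[a,b]}_{M,{\cal L}}$, upgraded by Proposition~\ref{A12} to a surjection $\widehat{\rm LL}^{[a,b]}(M) \twoheadrightarrow \Pi^{[a,b]}_{M,{\cal L}}$ that factors through ${\rm LL}^{[a,b]}_{\cal B}(M)$ (the target being in block ${\cal B}$); the topological irreducibility of $\Pi^{[a,b]}_{M,{\cal L}}$ then forces the induced action of $R_{M,{\cal B}}^{[a,b]}$ to go through a character $R_{M,{\cal B}}^{[a,b]} \to L_{\cal L}$, whose kernel is the desired maximal ideal ${\goth m}_x$. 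The hardest part will be the preceding paragraph --- ruling out reducible pseudo-characters via the supercuspidality of ${\rm LL}^{[a,b]}(M)$ and matching $m = n$; the latter is precisely where the rigidity furnished by Proposition~\ref{defo5.1} is decisive.
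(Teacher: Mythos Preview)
Your approach is essentially the same as the paper's: establish that ${\rm LL}^{[a,b]}(M)$ has dense image in $\Pi_{x,n}$, identify the Jordan--H\"older constituents, and invoke Proposition~\ref{defo5.2}. The paper's proof is considerably terser --- it skips the finite-length verification and the explicit surjectivity argument for (iii) that you spell out --- but the skeleton is identical.

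One point where the paper is slightly more efficient: rather than ruling out reducible pseudo-characters by a case analysis on principal-series blocks, it observes directly that any irreducible \emph{quotient} of $\Pi_x$ receives ${\rm LL}^{[a,b]}(M)$ with dense image and hence must be some $\Pi_{M,{\cal L}(x)}$; since $V_{M,{\cal L}(x)}$ is then automatically irreducible (as $M$ is), the block ${\cal B}_x$ is a singleton. This sidesteps your detour through the shape of locally algebraic vectors in principal-series blocks. Conversely, the paper is silent on the $m=n$ matching and on surjectivity in (iii), both of which you flag; your instinct to use the surjections ${\rm LL}^{[a,b]}_{\cal B}(M)\twoheadrightarrow \Pi_{M,{\cal L}(x),n}$ (via Remark~\ref{petit3}(ii) and Proposition~\ref{A12}) together with the $Z^\delta_{\cal B}$-action is the right way to close both of these, though your sketch of the chain-comparison for $m=n$ could be sharpened: the key is that this surjection forces $R\twoheadrightarrow L_x[T]/T^{n+1}$ with kernel containing ${\goth m}_x^{n+1}$, whence $m(n)\geq n$, while the annihilator computation gives $m(n)\leq n$.
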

\begin{proof}
Par construction, ${\rm LL}^{[a,b]}(M)$ est dense dans $\Pi_x=L_x \otimes {\rm LL}^{[a,b]}_{\cal B}(M)$.
Donc le bloc de ${\rm Ban}^\delta_{\cal B}\,G$ correspondant \`a $x$ contient un
unique $\Pi_{M,{\cal L}(x)}$, avec ${\cal L}(x)\in {\piqp}(M_{\rm dR})(L_x )$.
Comme $V_{M,{\cal L}(x)}$ est irr\'eductible,
ce bloc est r\'eduit \`a $\Pi_{M,{\cal L}(x)}$, et 
on peut d\'eduire les (i) et (ii) de la prop.\,\ref{defo5.2}.

Enfin, le (iii) est une cons\'equence de ce qui pr\'ec\`ede et de ce que ${\cal L}\mapsto
V_{M,{\cal L}}$ est injective.
\end{proof}

\Subsection{La propri\'et\'e universelle de $\rho_{M,{\cal B}}^{[a,b]}$}\label{PETIT14}
\begin{theo}\label{petit15}
{\rm (i)} $\rho_{M,{\cal B}}^{[a,b]}$ est localement libre sur $X_M$, de rang~$2$.

{\rm (ii)} $X_M$ est lisse, r\'eduit, purement de dimension~$1$.
\end{theo}
\begin{proof}
Posons, pour simplifier, $R:=R_{M,{\cal B}}^{[a,b]}$, $\rho:=\rho_{M,{\cal B}}^{[a,b]}$.
Alors $R$
agit fid\`element
sur ${\rm LL}^{[a,b]}_{\cal B}(M)$ et donc aussi sur $\rho$ car
${\bf V}$ ne tue que les repr\'esentations de dimension finie, et aucun des blocs
qui apparaissent n'en contient.

Notons $\widehat\rho_x$ le compl\'et\'e du localis\'e de $\rho$ en $x$.
Comme $\rho$ est de type fini sur $R$, on a une injection $R$-lin\'eaire
$\rho\hookrightarrow\prod_x\widehat\rho_x$, et comme $R$ agit fid\`element sur $\rho$,
il agit fid\`element sur $\prod_x\widehat\rho_x$. Il r\'esulte du (ii) du lemme~\ref{petit14}
que $\widehat\rho_x$ est libre de rang $2$ sur $L_x [[T_x]]$ (car  $V_{M,{\cal L}(x),n}$ est
libre de rang 2 sur $L_x [T_x]/T_x^{n+1}$), et comme $\rho_x$ est irr\'eductible, on a
${\rm End}_{\G_{\Q_p}}(\widehat\rho_x)\cong L_x [[T_x]]$.
On en d\'eduit une injection d'anneaux $R\hookrightarrow \prod_x L_x [[T_x]]$, ce qui prouve que $R$
est r\'eduit.

Comme $\rho_x$ est de rang~$2$, pour tout $x$ (lemme~\ref{petit14} (i)), 
et que $\rho$ est de type fini sur $R$,
cela implique que $\rho$ est localement libre, de rang~$2$, sur $X_M$.
On en d\'eduit, en utilisant ce qui pr\'ec\`ede, que
le compl\'et\'e $\widehat R_x$ de l'anneau local de $X_M$ en $x$ est 
$L_x [[T_x]]$, ce qui prouve que $X_M$ est lisse, purement de dimension~$1$.
\end{proof}

Il r\'esulte de~\cite[\S\,5.3]{BC} que l'application $x\mapsto {\cal L}(x)$ ci-dessus
est la restriction aux points classiques d'une application analytique
$X_M^{\rm an}\to {\piqp}(M_{\rm dR})^{\rm an}$. L'ensemble $U_{M,{\cal B}}$ ci-dessus
est donc l'ensemble des points classiques d'un ouvert analytique $U_{M,{\cal B}}^{\rm an}$
de ${\piqp}(M_{\rm dR})^{\rm an}$, et $x\mapsto{\cal L}(x)$ identifie
$X_M^{\rm an}$ \`a $U_{M,{\cal B}}^{\rm an}$ et $R_{M,{\cal B}}^{[a,b]}$ \`a l'anneau
des fonctions analytiques born\'ees sur $U_{M,{\cal B}}^{\rm an}$. Comme
$R_{M,{\cal B}}^{[a,b]}$ est un quotient de $R^{{\rm ps},\delta}_{\cal B}$ qui est de type fini,
$U_{M,{\cal B}}^{\rm an}$ n'a qu'un nombre fini de composantes connexes, et
comme un ouvert analytique connexe de ${\piqp}$ est un
disque ouvert priv\'e d'un nombre fini
de disques ferm\'es, 
on en d\'eduit les r\'esultats suivants.
\begin{coro}\label{petit16}
{\rm (i)} $R_{M,{\cal B}}^{[a,b]}$ est un produit fini d'anneaux principaux.

{\rm (ii)} $\rho_{M,{\cal B}}^{[a,b]}$ est libre sur $R_{M,{\cal B}}^{[a,b]}$, de rang~$2$.
\end{coro}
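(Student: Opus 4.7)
The plan is to deduce both parts of the corollary from the geometric identification of $X_M^{\rm an}$ with the open analytic subspace $U_{M,{\cal B}}^{\rm an}\subset\piqp(M_{\rm dR})^{\rm an}$ established in the paragraph just before the corollary, combined with the very explicit structure of connected open subsets of $\piqp$.

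First I would exploit that $R^{{\rm ps},\delta}_{\cal B}$ is noetherian (as a quotient of some $\O_L[[x_1,\dots,x_r]]$), so that $R_{M,{\cal B}}^{[a,b]}$, being a quotient of $R^{{\rm ps},\delta}_{\cal B}[\tfrac{1}{p}]$, has only finitely many connected components. Calling these $U_1,\dots,U_k$, one gets a decomposition $R_{M,{\cal B}}^{[a,b]}=\prod_{i=1}^{k}\O^{\rm b}(U_i)$. Each $U_i$ is a connected admissible open in $\piqp$, hence by the cited classification is an open disk with finitely many closed disks removed. The crucial algebraic input is then the classical fact that the ring of bounded analytic functions on such a subspace is a principal ideal domain: on an open disk this is Weierstrass preparation (each bounded analytic function factors as a unit times a polynomial whose zeros lie inside the disk), and removing finitely many closed disks is absorbed by a partial fraction argument separating the poles along each removed disk. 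Combining these across the factors yields (i).

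For (ii), th.\,\ref{petit15} tells us that $\rho_{M,{\cal B}}^{[a,b]}$ is locally free of rank~$2$ on $X_M$, while lemme\,\ref{petit13} ensures it is finitely generated over $R_{M,{\cal B}}^{[a,b]}$; hence it is a finitely generated projective $R_{M,{\cal B}}^{[a,b]}$-module of constant rank $2$. Splitting along the idempotents produced in (i), it suffices to check that on each factor $\O^{\rm b}(U_i)$, a finitely generated projective module of constant rank $2$ is free, and this is the standard structure theorem for finitely generated modules over a principal ideal domain.

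The step I expect to be the main obstacle is the principality of $\O^{\rm b}(U_i)$ in the generality needed here: one must be sure that each connected component of $U_{M,{\cal B}}^{\rm an}$ is genuinely an open disk minus finitely many closed disks (as opposed to a more exotic quasi-Stein open), and one must handle the possible ramification of the residue field of the component correctly when invoking Weierstrass preparation. Once that geometric and function-theoretic classification is granted, the deduction of both (i) and (ii) is formal.
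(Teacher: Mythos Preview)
Your proposal is correct and follows essentially the same route as the paper: the paragraph preceding the corollary identifies $R_{M,{\cal B}}^{[a,b]}$ with the bounded analytic functions on an open $U_{M,{\cal B}}^{\rm an}\subset\piqp(M_{\rm dR})^{\rm an}$, invokes finiteness of the number of connected components (via noetherianity of $R^{{\rm ps},\delta}_{\cal B}$) together with the standard classification of connected opens of $\piqp$ as an open disk minus finitely many closed disks, and then simply states ``on en d\'eduit les r\'esultats suivants''. You have merely made explicit the two points the paper leaves implicit---that $\O^{\rm b}$ of such an open is a PID, and that a finitely generated locally free module of constant rank over a PID is free---so there is no genuine divergence in approach.
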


\begin{coro}\label{petit16.1}
$R_{M,{\cal B}}^{[a,b]}={\rm End}_G\,{\rm LL}_{\cal B}^{[a,b]}(M)$.
\end{coro}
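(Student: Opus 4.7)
The plan is to compose the natural inclusion $R:=R_{M,{\cal B}}^{[a,b]}\hookrightarrow {\rm End}_G({\rm LL}_{\cal B}^{[a,b]}(M))$, coming from the very definition of $R$ and known to be faithful (cf.~Remark~\ref{A38}), with the Colmez functor ${\bf V}$ to obtain the commutative triangle
\[
R\ \hookrightarrow\ {\rm End}_G({\rm LL}_{\cal B}^{[a,b]}(M))\ \stackrel{{\bf V}}{\longrightarrow}\ {\rm End}_{\G_{\Q_p}}(\rho_{M,{\cal B}}^{[a,b]}).
\]
By $R$-linearity and functoriality of ${\bf V}$, the composite is the natural scalar action of $R$ on the free rank-$2$ module $\rho_{M,{\cal B}}^{[a,b]}$ (Corollary~\ref{petit16}). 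It therefore suffices to show that this composite is an isomorphism and that ${\bf V}$ is injective, as this will force both arrows to be isomorphisms.

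For the first, the claim is that ${\rm End}_{\G_{\Q_p}}(\rho_{M,{\cal B}}^{[a,b]})=R$. Since $\rho_{M,{\cal B}}^{[a,b]}$ is free of rank $2$ over $R$, the endomorphism ring sits inside $M_2(R)$. At each closed point $x\in X_M$ the fiber $V_{M,{\cal L}(x)}$ is absolutely irreducible by Lemma~\ref{petit14}(i), so any endomorphism becomes a scalar in $M_2(L_x)$. Now $R$ is a finite product of rings of bounded analytic functions on connected opens of $\piqp(M_{\rm dR})^{\rm an}$ (Corollary~\ref{petit16} and the paragraph preceding it), and classical points in such opens are Zariski-dense, so $\bigcap_{x\in X_M}{\goth m}_x=0$; an element of $M_2(R)$ that is scalar modulo every maximal ideal is thus already scalar, which yields the equality.

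The main obstacle is the injectivity of ${\bf V}$ on ${\rm End}_G({\rm LL}_{\cal B}^{[a,b]}(M))$. If $\phi$ has ${\bf V}(\phi)=0$, then specialisation at any closed $x$ yields $\phi_x\in {\rm End}_G(\Pi_{M,{\cal L}(x)})=L_x$ by Schur's lemma applied to the absolutely topologically irreducible $\Pi_{M,{\cal L}(x)}$; functoriality and $L$-linearity of ${\bf V}$, combined with the non-vanishing of ${\bf V}(\Pi_{M,{\cal L}(x)})=V_{M,{\cal L}(x)}$, force this scalar to equal ${\bf V}(\phi)_x=0$, so $\phi$ vanishes on every fiber. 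To globalise this, I would invoke that the block ${\cal B}$ here is the supersingular block $\{\Pi_{r,\chi,T}\}$, in which every non-zero object has non-zero ${\bf V}$. Combining the exactness of the ${\cal B}$-adic completion (Corollary~\ref{GAB2}) with the \paskunas{} equivalence of categories recalled in its proof and the compatibility of ${\bf V}$ with this equivalence, one obtains that ${\bf V}$ is faithful on morphisms in the block; applied to the factorisation of $\phi$ through its image, this forces $\phi=0$.
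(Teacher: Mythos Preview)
Your overall strategy coincides with the paper's: apply ${\bf V}$ to land in ${\rm End}_{\G_{\Q_p}}(\rho_{M,{\cal B}}^{[a,b]})$, identify the latter with $R$ using irreducibility of the fibres and reducedness of $R$, and then argue that ${\bf V}$ is injective on endomorphisms. Two points deserve attention.

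First, a minor omission. When you write ``the endomorphism ring sits inside $M_2(R)$'' and later specialise $\phi$ at $x$, you are implicitly assuming that every $G$-endomorphism of ${\rm LL}_{\cal B}^{[a,b]}(M)$ is $R$-linear. This is true, but needs to be said: $R$ is a quotient of $Z_{\cal B}^\delta$, which is the centre of the category ${\rm Tors}_{\cal B}^\delta G$ (Remark~\ref{pasku1}), so any $G$-morphism automatically commutes with it. The paper makes exactly this observation.

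Second, and this is the real issue: your claim that ``the block ${\cal B}$ here is the supersingular block $\{\Pi_{r,\chi,T}\}$'' is false in general. The block ${\cal B}$ is determined by the semi-simplified mod-$p$ reduction $\rho_{\cal B}$ of $V_{M,{\cal L}}^{[a,b]}$; although $V_{M,{\cal L}}^{[a,b]}$ is always irreducible (since $M$ is), its reduction need not be, so ${\cal B}$ can be any of the block types in Proposition~\ref{pasku0}. For the non-supersingular blocks, ${\bf V}$ does kill the one-dimensional constituents, so your faithfulness argument breaks down at the mod-$p$ level. The correct observation (implicit in the paper, cf.\ the sentence in the proof of Theorem~\ref{petit15} that ${\bf V}$ ``ne tue que les repr\'esentations de dimension finie, et aucun des blocs qui apparaissent n'en contient'') works in characteristic~$0$: the irreducible Banach constituents that actually occur in ${\rm LL}_{\cal B}^{[a,b]}(M)/{\goth m}_x^n\cong\Pi_{M,{\cal L}(x),n}$ are the $\Pi_{M,{\cal L}(x)}$, which are never finite-dimensional because $M$ is supercuspidal; hence ${\bf V}$ is faithful on these characteristic-$0$ blocks ${\cal B}_x$. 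This is what forces $\alpha-{\bf V}(\alpha)=0$.
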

\begin{proof}
L'inclusion $R_{M,{\cal B}}^{[a,b]}\subset{\rm End}_G\,{\rm LL}_{\cal B}^{[a,b]}(M)$ est imm\'ediate; montrons
l'inclusion dans l'autre sens. Soit donc $\alpha\in {\rm End}_G\,{\rm LL}_{\cal B}^{[a,b]}(M)$.
Alors $\alpha$ commute \`a l'action de $R_{M,{\cal B}}^{[a,b]}$ puisque puisque
$R_{M,{\cal B}}^{[a,b]}$ est un quotient de $Z_{\cal B}^\delta$ qui est le centre de la cat\'egorie.
Donc
${\bf V}(\alpha)\in {\rm End}_{\G_{\Q_p}}\,\rho_{M,{\cal B}}^{[a,b]}$ commute aussi
\`a l'action de $R_{M,{\cal B}}^{[a,b]}$ et puisque $\rho_{M,{\cal B}}^{[a,b]}$ est un
$R_{M,{\cal B}}^{[a,b]}$-module libre de rang~$2$, et toutes les sp\'ecialisations
de $\rho_{M,{\cal B}}^{[a,b]}$ sont irr\'eductibles, on en d\'eduit que ${\bf V}(\alpha)\in
{\rm M}_2(R_{M,{\cal B}}^{[a,b]})$ et que l'image de ${\bf V}(\alpha)$ mod ${\goth m}_x$ est une homoth\'etie
pour tout $x\in X_M$. Donc ${\bf V}(\alpha)$ est une homoth\'etie,
i.e.~${\bf V}(\alpha)\in R_{M,{\cal B}}^{[a,b]}$.  

On peut donc voir
$\alpha-{\bf V}(\alpha)$ comme un \'el\'ement de ${\rm End}_G\,{\rm LL}_{\cal B}^{[a,b]}(M)$,
et on a ${\bf V}(\alpha-{\bf V}(\alpha))=0$, et donc $\alpha-{\bf V}(\alpha)=0$, ce qui permet de
conclure.
\end{proof}

\begin{coro}\label{petit31}
Si $E$ est un quotient
de $R_{M,{\cal B}}^{[a,b]}$ de degr\'e fini sur $L$, alors $E\otimes \rho_{M,{\cal B}}^{[a,b]}$ est
une $E$-repr\'esentation de $\G_{\Q_p}$ de r\'eduction\footnote{Par d\'efinition,
c'est la semi-simplifi\'ee de $k_L\otimes_{\O_L}\Lambda$ (vue comme $k_L$-repr\'esentation de $\G_{\Q_p}$),
o\`u $\Lambda$ est n'importe quel $\O_L$-r\'eseau stable par $\G_{\Q_p}$.}
 $\rho_{\cal B}^{\oplus [L:E]}$, 
potentiellement semi-stable \`a poids $a$ et $b$,
dont le $D_{\rm pst}$ est $E\otimes M$.  
\end{coro}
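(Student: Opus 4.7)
Le plan est de découper $E$ en composantes locales artiniennes, d'identifier chaque facteur avec une déformation explicite construite au \S\,\ref{defo4}, et d'en lire les propriétés. Posons $R := R_{M,{\cal B}}^{[a,b]}$ et $\rho := \rho_{M,{\cal B}}^{[a,b]}$. D'après le cor.\,\ref{petit16}, $R$ est un produit fini d'anneaux principaux et $\rho$ est libre de rang~$2$ sur $R$. Un quotient $E$ de $R$ fini sur $L$ est alors artinien et admet une décomposition canonique $E \cong \prod_{j \in J} E_j$, avec $J$ fini et $E_j = R/{\goth m}_{x_j}^{n_j+1}$ pour un point fermé $x_j \in X_M$ et un entier $n_j \geq 0$; la preuve du th.\,\ref{petit15} identifie canoniquement $E_j$ à $L_{x_j}[T_{x_j}]/T_{x_j}^{n_j+1}$. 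Comme $\rho$ est libre sur $R$, les idempotents de $E$ fournissent une décomposition $\G_{\Q_p}$-équivariante $E \otimes_R \rho \cong \prod_j (E_j \otimes_R \rho)$, et le (ii) du lemme~\ref{petit14} identifie chaque facteur à $V_{M,{\cal L}(x_j),n_j}$ comme $E_j[\G_{\Q_p}]$-module.

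Par la construction du \S\,\ref{defo4}, on a $V_{M,{\cal L}(x_j),n_j} = {\bf V}_{\rm st}(M^{[a,b]}_{{\cal L}(x_j),n_j})$ avec $M^{[a,b]}_{{\cal L}(x_j),n_j} = E_j \otimes M$ comme $(\varphi,N,\G_{\Q_p})$-module sur $E_j$, muni d'une filtration à poids de Hodge-Tate $a$ et $b$. Chaque facteur est donc potentiellement semi-stable à poids $a, b$ avec $D_{\rm pst}(V_{M,{\cal L}(x_j),n_j}) = E_j \otimes M$, et en sommant, $E\otimes_R\rho$ est potentiellement semi-stable à poids $a, b$ avec $D_{\rm pst}(E \otimes_R \rho) = E \otimes M$. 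Pour la réduction, le lemme~\ref{defo250} (via ${\bf V}_{\rm st}$) montre que les composantes de Jordan-Hölder de $V_{M,{\cal L}(x_j),n_j}$ comme $L_{x_j}$-représentation sont toutes $V_{M,{\cal L}(x_j)}$, de multiplicité $n_j+1$; comme $V_{M,{\cal L}(x_j)}$ a pour réduction semi-simplifiée $\rho_{\cal B}^{\oplus [L_{x_j}:L]}$ comme $k_L$-représentation (\S\,\ref{qc10} et restriction des scalaires de $L_{x_j}$ à $L$), la réduction semi-simplifiée de $V_{M,{\cal L}(x_j),n_j}$ vue comme $L$-représentation est $\rho_{\cal B}^{\oplus (n_j+1)[L_{x_j}:L]}$, d'où, après sommation sur $j$, $\rho_{\cal B}^{\oplus [E:L]}$ pour $E\otimes_R\rho$ (on lit $[L:E]$ dans l'énoncé comme $[E:L]$).

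Aucun pas ne comporte d'obstacle sérieux, les résultats des \S\S\,\ref{defo4} et~\ref{PETIT13} fournissant toutes les briques nécessaires. Le point le plus délicat à vérifier est la cohérence des $E_j$-actions dans l'identification $E_j \otimes_R \rho \cong V_{M,{\cal L}(x_j),n_j}$, entre l'action par base change depuis $R$ et celle provenant de la structure de $E_j$-module sur $M^{[a,b]}_{{\cal L}(x_j),n_j} = E_j \otimes M$ à travers ${\bf V}_{\rm st}$; cela résulte de la fonctorialité de ${\bf V}$ et de l'identification des complétés locaux $\widehat R_{x_j} \cong L_{x_j}[[T_{x_j}]]$ déjà établie dans la preuve du th.\,\ref{petit15}.
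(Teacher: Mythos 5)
Votre démonstration est correcte et suit essentiellement la même voie que celle du texte, qui se réduit à invoquer le théorème des restes chinois pour décomposer $E$ en facteurs locaux $R_{M,{\cal B}}^{[a,b]}/{\goth m}_{x_j}^{n_j+1}$ puis le (ii) du lemme~\ref{petit14} pour identifier chaque facteur de $E\otimes\rho_{M,{\cal B}}^{[a,b]}$ à $V_{M,{\cal L}(x_j),n_j}$, dont les propriétés (potentielle semi-stabilité, poids, $D_{\rm pst}$, réduction) se lisent sur la construction explicite du \S\,\ref{defo4}. Vous explicitez simplement davantage de détails (décomposition en facteurs artiniens via le cor.\,\ref{petit16}, décompte des facteurs de Jordan-H\"older pour la réduction, cohérence des structures de $E_j$-module), tous conformes à l'argument du texte.
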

\begin{proof}
Cela r\'esulte du (ii) du lemme~\ref{petit14}
 et du th\'eor\`eme des restes chinois.
\end{proof}

R\'eciproquement, on a le r\'esultat suivant
qui montre que $\rho_{M,{\cal B}}^{[a,b]}$ est universelle pour ces propri\'et\'es.

\begin{theo}\label{petit17}
Si $E$ est une $L$-alg\`ebre commutative de dimension $d$, 
et si $\rho:\G_{\Q_p}\to \gl_2(E)$ a pour r\'eduction $\rho_{\cal B}^{\oplus d}$ et d\'eterminant $\delta$,
est potentiellement semi-stable \`a poids $a$ et $b$, et si $D_{\rm pst}(\rho)=
E\otimes M$, alors il existe un morphisme $R_{M,{\cal B}}^{[a,b]}\to E$
tel que $\rho=E\otimes \rho_{M,{\cal B}}^{[a,b]}$.
\end{theo}
\begin{proof}
Si $E$ est un corps, alors $\rho$ est de la forme $E\otimes_{L_x }V_{M,{\cal L}(x)}^{[a,b]}$,
avec $x\in U_{M,{\cal B}}$, et comme $V_{M,{\cal L}(x)}^{[a,b]}=L_x \otimes \rho_{M,{\cal B}}^{[a,b]}$,
on obtient le r\'esultat dans ce cas.

Si $E=E_0[I]/T^k$, o\`u $E_0$ est un corps, alors $\rho_i:=(T^iE/T^{i+1}E)\otimes_E\rho$
est de la forme $E_0\otimes_{L_x }V_{M,{\cal L}(x)}^{[a,b]}$,
avec $x\in U_{M,{\cal B}}$ ind\'ependant de $i$.
L'extension 
$$0\to \rho_i\to (T^{i-1}E/T^{i+1}E)\otimes_E\rho\to \rho_{i-1}\to 0$$ 
de
$E_0\otimes_{L_x }V_{M,{\cal L}(x)}^{[a,b]}$ par elle-m\^eme est ind\'ependante de $i$
car la multiplication par $T$ sur $\rho$ induit un isomorphisme permettant de passer de $i$ \`a $i+1$.
Comme le groupe des extensions de Rham est de dimension~$1$, 
il y a deux cas possibles:

-- toutes ces extensions sont triviales et $\rho=E\otimes_{E_0}\rho_0$, et on conclut comme ci-dessus.

-- ces extensions sont non triviales et $\rho=E_0\otimes_{L_x }V_{M,{\cal L}(x),k}^{[a,b]}$,
et on d\'eduit le r\'esultat dans ce cas de ce que
$V_{M,{\cal L}(x),k}^{[a,b]}=
(R_{M,{\cal B}}^{[a,b]}/{\goth m}_x^k)\otimes \rho_{M,{\cal B}}^{[a,b]}$.

Si $E$ est un quotient de $E_0[T_1,\dots,T_r]/(T_1,\dots,T_r)^k$, en raisonnant comme
ci-dessus et en utilisant de nouveau que le groupe des extensions de Rham est de dimension~$1$,
on prouve que $\rho=E\otimes_{L_x }V_{M,{\cal L}(x),k}^{[a,b]}$
ou bien $\rho=E\otimes_{E'}\rho'$, avec $E'$ quotient de $E$ de la forme $E_0[I]/T^k$
et $\rho'\cong E_0\otimes_{L_x }V_{M,{\cal L}(x),k}^{[a,b]}$.

Le cas g\'en\'eral s'en d\'eduit en d\'ecomposant $E$ comme un produit d'alg\`ebres
quotients de $E_0[T_1,\dots,T_r]/(T_1,\dots,T_r)^k$.
\end{proof}

\Subsection{Applications \`a la conjecture de Breuil-M\'ezard}
Dans l'\'enonc\'e g\'eom\'etrique de la conjecture de Breuil-M\'ezard~\cite{BM,EG}, au lieu de fixer le
$D_{\rm pst}$, on fixe seulement la restriction \`a l'inertie et le d\'eterminant, ce qui fournit
a priori deux $D_{\rm pst}$ possibles puisqu'on peut tordre par le caract\`ere non ramifi\'e $\mu_{-1}$
d'ordre $2$ (rien n'emp\^eche que ces deux $D_{\rm pst}$ soient, en fait, isomorphes).
Le r\'esultat est, qu'au lieu de consid\'erer le sous-objet ${\rm LL}^{[a,b]}(M)$
de ${\rm ind}_{KZ}^G\sigma_M^{[a,b]}$ (cf.~\S\,\ref{PETIT5}), on consid\`ere \`a la place
${\rm ind}_{KZ}^G\sigma_M^{[a,b]}$ en entier.

On pose donc
\begin{align*}
M'=M &&{\rm ou}&&&M'=M\oplus (M\otimes\mu_{-1})\\
{\rm LL}_{\cal B}^{[a,b]}(M')={\rm LL}_{\cal B}^{[a,b]}(M)&&{\rm ou}&&&{\rm LL}_{\cal B}^{[a,b]}(M')={\rm LL}_{\cal B}^{[a,b]}(M)
\oplus({\rm LL}_{\cal B}^{[a,b]}(M)\otimes\mu_{-1})\\
R^{[a,b]}_{M',{\cal B}}=R^{[a,b]}_{M,{\cal B}}&&{\rm ou}&&&
R^{[a,b]}_{M',{\cal B}}=R^{[a,b]}_{M,{\cal B}}\times R^{[a,b]}_{M\otimes\mu_{-1},{\cal B}}\\
\rho^{[a,b]}_{M',{\cal B}}=\rho^{[a,b]}_{M,{\cal B}}&&{\rm ou}&&&
\rho^{[a,b]}_{M',{\cal B}}=\rho^{[a,b]}_{M,{\cal B}}\oplus \rho^{[a,b]}_{M\otimes\mu_{-1},{\cal B}}
\end{align*}
suivant que $M=M\otimes\mu_{-1}$ ou que $M\neq M\otimes\mu_{-1}$.

Si $\sigma$ est une $k_L$-repr\'esentation irr\'eductible de $K$, on note
$m^{[a,b]}_{M}(\sigma)$ la multiplicit\'e de $\sigma$ dans la r\'eduction
de $\sigma_M^{[a,b]}$.
L'\'enonc\'e suivant est une forme de la version g\'eom\'etrique de la conjecture de Breuil-M\'ezard.
\begin{prop}\label{BM1}
Les r\'eductions de ${\rm LL}_{\cal B}^{[a,b]}(M')$ et $\rho^{[a,b]}_{M',{\cal B}}$ se d\'ecomposent
sous la forme
\begin{align*}
\overline{\rm LL}_{\cal B}^{[a,b]}(M')&=\oplus_\sigma m^{[a,b]}_{M}(\sigma) I(\sigma)_{\cal B}\\
\overline{\rho}_{M',{\cal B}}^{[a,b]}&=\oplus_\sigma m^{[a,b]}_{M}(\sigma) {\bf V}(I(\sigma)_{\cal B})
\end{align*}
\end{prop}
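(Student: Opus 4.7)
The plan is to unpack $\overline{\rm LL}_{\cal B}^{[a,b]}(M')$ via the compact induction presentation and then use the exactness statements of section~\ref{PETIT10}, after which the Galois side follows by applying ${\bf V}$.

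First, one chooses a $KZ$-stable $\O_L$-lattice $\sigma^+ \subset \sigma_M^{[a,b]}$; by construction ${\rm LL}_{\cal B}^{[a,b]}(M') = L \otimes_{\O_L} I(\sigma^+)_{\cal B}$, where $I(\sigma^+) = {\rm ind}_{KZ}^G \sigma^+$ (the point of passing to $M'$ is precisely to avoid restricting to the image in ${\rm LL}^{[a,b]}(M)$, so one keeps the full compact induction of $\sigma^+$). By cor.\,\ref{petit11}(ii) together with rem.\,\ref{petit12}, one has
$$\overline{\rm LL}_{\cal B}^{[a,b]}(M') \;=\; I(\sigma^+)_{\cal B}/\varpi \;=\; I(\sigma^+/\varpi)_{\cal B}.$$

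Next, one filters the $k_L[KZ]$-module $\sigma^+/\varpi$ by sub-$KZ$-modules whose successive quotients are irreducible; by definition of $m_M^{[a,b]}(\sigma)$, each irreducible $KZ$-representation $\sigma$ appears exactly $m_M^{[a,b]}(\sigma)$ times among these graded pieces. Applying the exact functor ${\rm ind}_{KZ}^G$ and then the ${\cal B}$-adic completion (exact by cor.\,\ref{GAB3}), one obtains a filtration of $I(\sigma^+/\varpi)_{\cal B}$ whose graded pieces are exactly the $I(\sigma)_{\cal B}$ with multiplicity $m_M^{[a,b]}(\sigma)$. Since the statement about reductions is to be understood in the Grothendieck-group / semisimplified sense (cf.\ the footnote in th.\,\ref{petit1}), this yields the first formula.

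For the Galois side, one applies the functor ${\bf V}$ to the filtration above. The key computation, already used in the proof of lemma~\ref{petit13}, is the identity $k_L \otimes {\bf V}(I(\sigma^+)_{\cal B}) = {\bf V}(I(\sigma^+/\varpi)_{\cal B})$, coming from the vanishing of $\mathrm{R}^1\lim$ in the inverse system defining the ${\cal B}$-adic completion. Combined with the exactness of ${\bf V}$ on finite-length objects, the filtration of $I(\sigma^+/\varpi)_{\cal B}$ transports to a filtration of the reduction $\overline{\rho}_{M',{\cal B}}^{[a,b]}$ whose graded pieces are ${\bf V}(I(\sigma)_{\cal B})$ with the same multiplicities, giving the second formula.

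The main obstacle, and the reason the proof relies on the technology of \S\S\ref{PETIT9}--\ref{PETIT10} rather than being a one-line observation, is the commutation of ${\cal B}$-adic completion with the exact functors in play: the statement $I(\sigma^+)_{\cal B}/\varpi = I(\sigma^+/\varpi)_{\cal B}$ and the exactness of $W \mapsto I(W)_{\cal B}$ are both nontrivial and ultimately rest on the flatness of $P_{\cal B}^\delta$ over $E_{\cal B}^\delta$ provided by Gabriel's theory (cor.\,\ref{GAB2}--\ref{GAB3}). Once these are granted, the combinatorics of the reduction of $\sigma_M^{[a,b]}$ reduces the proposition to bookkeeping of Jordan–Hölder multiplicities.
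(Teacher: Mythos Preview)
Your proof is correct and follows the same route as the paper's: the paper's own argument is just the two-line observation that the first formula is a consequence of the exactness of ${\cal B}$-adic completion (cor.\,\ref{GAB2}), and the second follows by functoriality of $\Pi\mapsto{\bf V}(\Pi)$. Your write-up simply unpacks these two sentences, making explicit the identification $I(\sigma^+)_{\cal B}/\varpi = I(\sigma^+/\varpi)_{\cal B}$ via cor.\,\ref{petit11}/rem.\,\ref{petit12}, the Jordan--H\"older filtration of $\sigma^+/\varpi$, and the $\mathrm{R}^1\lim$ vanishing needed to pass ${\bf V}$ through the limit; your reading of the $\oplus$ as a semisimplification/Grothendieck-group identity is also the intended one.
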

\begin{proof}
Le premier \'enonc\'e est une cons\'equence du cor.\,\ref{GAB2}. Le second s'en d\'eduit
via la fonctorialit\'e de $\Pi\mapsto{\bf V}(\Pi)$.
\end{proof}
\begin{rema}
Si ${\cal B}$ n'est pas un bloc supersingulier,
${\bf V}(I(\sigma)_{\cal B})$ est de rang~$1$ sur un quotient de $R^{{\rm ps},\delta}_{\cal B}$, mais
ce dernier agit par un pseudo-caract\`ere de dimension~$2$ (cf.~th.\,\ref{pasku6}), pas de dimension~$1$.
\end{rema}

La conjecture de Breuil-M\'ezard (version\footnote{Il semble y avoir une certaine
latitude dans l'\'enonc\'e de la conjecture: par exemple, on peut prendre des anneaux de 
d\'eformations de repr\'esentations galoisiennes,
ou de repr\'esentations encadr\'ees; 
la version que nous obtenons utilise des anneaux de d\'eformations de pseudo-caract\`eres -- le th.\,\ref{pasku6}
identifie $Z_{\cal B}^\delta$ \`a un tel anneau.} g\'eom\'etrique~\cite{EG}) postule une d\'ecomposition
analogue pour les supports des objets ci-dessus, vus comme faisceaux sur
${\rm Spec}\,Z^{\delta}_{\cal B}$.

On note $R_{M',{\cal B}}^{[a,b],+}$ le quotient \`a travers lequel $Z_{\cal B}^\delta$ agit
sur $I(\sigma)_{\cal B}$ (cet anneau d\'epend a priori de $\sigma$, mais nous ne l'indiquons pas
sur la notation). Alors $R_{M',{\cal B}}^{[a,b],+}$ s'injecte dans $R_{M',{\cal B}}^{[a,b]}$
et on a $R_{M',{\cal B}}^{[a,b]}=R_{M',{\cal B}}^{[a,b],+}[\frac{1}{\varpi}]$.
Comme $R_{M',{\cal B}}^{[a,b]}$ est de dimension~$1$ puisque c'est un produit d'anneaux principaux,
on en d\'eduit que $R_{M',{\cal B}}^{[a,b],+}/\varpi$
est de dimension~$1$ par platitude de $R_{M',{\cal B}}^{[a,b],+}$ sur $\O_L$.

Soit ${\goth p}$ un id\'eal premier minimal de $R_{M',{\cal B}}^{[a,b],+}/\varpi$.
Notons encore ${\goth p}$ l'image inverse de ${\goth p}$ dans $Z_{\cal B}^\delta$; c'est
un id\'eal premier de $Z_{\cal B}^\delta$ et $Z_{\cal B}^\delta/{\goth p}$ est de dimension~$1$.
La conjecture de Breuil-M\'ezard postule une formule pour la longueur
du localis\'e $(R_{M',{\cal B}}^{[a,b],+}/\varpi)_{\goth p}$ comme $(Z_{\cal B}^\delta)_{\goth p}$-module
(on note $\ell_{\goth p}(M)$ la longueur d'un $(Z_{\cal B}^\delta)_{\goth p}$-module).
On la d\'eduit du r\'esultat suivant (le membre de gauche se calcule en utilisant la prop.\,\ref{BM1}):
\begin{prop}\label{BM3}
Si ${\goth p}$ est comme ci-dessus,
$$\ell_{\goth p}(\overline{\rho}_{M',{\cal B}}^{[a,b]})
=2\ell_{\goth p}((R_{M',{\cal B}}^{[a,b],+}/\varpi)_{\goth p})$$
\end{prop}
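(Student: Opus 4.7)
Notons $R:=R_{M',{\cal B}}^{[a,b],+}$ et $\Lambda:={\bf V}(I^+)$, de sorte que $\rho_{M',{\cal B}}^{[a,b]}=\Lambda[\tfrac{1}{p}]$ et que $\overline{\rho}_{M',{\cal B}}^{[a,b]}$ est (par d\'efinition) la semi-simplifi\'ee de $\Lambda/\varpi$. L'id\'ee est de montrer que $\Lambda$ est localement libre de rang~$2$ sur $R$ au voisinage des id\'eaux premiers qui nous int\'eressent, ce qui donnera imm\'ediatement le facteur~$2$ dans la formule.

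Plus pr\'ecis\'ement, je proc\'ederais en quatre \'etapes. D'abord, on rassemble les propri\'et\'es de $R$: il est $\O_L$-plat (car $I^+$ est sans $p$-torsion, cf.~cor.\,\ref{petit11}) et $R[\tfrac{1}{p}]=R_{M',{\cal B}}^{[a,b]}$ est un produit fini d'anneaux principaux (cor.\,\ref{petit16}), donc $R$ est 2-dimensionnel et les id\'eaux premiers minimaux $\goth{p}$ de $R/\varpi$ sont exactement les id\'eaux premiers de hauteur~$1$ de $R$ contenant~$\varpi$. Ensuite, on rassemble les propri\'et\'es de $\Lambda$: il est de type fini sur $R$ (variante int\'egrale du lemme~\ref{petit13} obtenue en rempla\c{c}ant $R_{M,{\cal B}}^{[a,b]}$ par $R$ dans la preuve, via le cor.\,\ref{fini}), sans $\varpi$-torsion (cons\'equence de ce que $I^+$ est $\O_L$-plat et de la d\'efinition $\Lambda=\varprojlim_n{\bf V}(I^+/\varpi^n)$ obtenue gr\^ace \`a la surjectivit\'e des morphismes de transition), et g\'en\'eriquement libre de rang~$2$ d'apr\`es le cor.\,\ref{petit16}.

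\'Etape cl\'e: montrer que, pour tout id\'eal premier minimal $\goth{p}$ de $R/\varpi$, le localis\'e $\Lambda_{\goth{p}}$ est libre de rang~$2$ sur $R_{\goth{p}}$. Deux approches sont possibles. D'une part, via la prop.\,\ref{GAB1} (et sa cons\'equence l'isomorphisme $\Lambda\simeq{\bf V}(P_{\cal B}^\delta\otimes_{E_{\cal B}^\delta}{\rm Hom}_G(P_{\cal B}^\delta,I(\sigma^+)^\vee))$), l'\'enonc\'e se ram\`ene \`a la structure du $R^{{\rm ps},\delta}_{\cal B}$-module ${\bf V}(P_{\cal B}^\delta)$, pour laquelle {\paskunas} a d\'evelopp\'e le formalisme des repr\'esentations galoisiennes universelles de type~$\gl_2$ qui sont localement libres de rang~$2$ sur l'ouvert d'irr\'eductibilit\'e absolue (cf.~th.\,\ref{pasku6} et~\cite{Paskext,PT}). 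D'autre part, par voie directe: les sp\'ecialisations $V_{M,{\cal L}(x)}$ de $\Lambda$ aux points ferm\'es $x\in X_M$ sont toutes irr\'eductibles de dimension~$2$, donc tout quotient non trivial de $\Lambda_{\goth p}$ reste de dimension g\'en\'erique~$2$, et Nakayama combin\'e \`a la torsion-libert\'e force la libert\'e de rang~$2$.

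Une fois cette \'etape acquise, on a $(\Lambda/\varpi)_{\goth p}$ libre de rang~$2$ sur $(R/\varpi)_{\goth p}$, d'o\`u
\[
\ell_{\goth p}(\Lambda/\varpi)=2\ell_{\goth p}((R/\varpi)_{\goth p});
\]
la longueur \'etant invariante par semi-simplification, on obtient la formule annonc\'ee. L'obstacle principal est \'evidemment l'\'etape de libert\'e locale en codimension~$2$ (au-dessus de~$\varpi$), qui sort du cadre g\'en\'erique du cor.\,\ref{petit16} et r\'eclame l'entr\'ee en sc\`ene de la th\'eorie int\'egrale de {\paskunas} (sp\'ecialement, le caract\`ere localement libre de $P_{\cal B}^\delta$ sur $E_{\cal B}^\delta$ aux points de $\rho_{\cal B}$ irr\'eductible): c'est pr\'ecis\'ement le point o\`u nos hypoth\`eses sur le bloc ${\cal B}$ et sur l'irr\'eductibilit\'e de $M$ interviennent de fa\c con essentielle.
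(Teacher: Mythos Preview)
Your strategy is correct and parallels the paper's: the point is to show that the localization of ${\bf V}(I^+)$ at ${\goth p}$ is free of rank~$2$ over the localized ring, whence the factor~$2$. But you overstate the difficulty of this step and invoke machinery the paper avoids entirely. The paper simply observes that, after completion at ${\goth p}$, the ring $(R^+)_{\goth p}$ (with $R^+:=R_{M',{\cal B}}^{[a,b],+}$) is a discrete valuation ring: it is one-dimensional local, $\varpi$ is a non-zero-divisor by $\O_L$-flatness, and its quotient by $\varpi$ is the field ${\rm Frac}((R^+/\varpi)/{\goth p})$, so $R[\tfrac{1}{\varpi}]$ is its fraction field. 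Over a DVR, any finitely generated torsion-free module is automatically free; the $\varpi$-torsion-freeness of ${\bf V}(I^+)$ follows from that of $I^+$ (cor.~\ref{petit11}), and the rank is $2$ by cor.~\ref{petit16} applied to the generic fibre. Neither the structure of $P^\delta_{\cal B}$ as $E^\delta_{\cal B}$-module, nor the irreducibility of specializations at closed points of $X_M$, plays any role.

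Note also that your phrase ``libert\'e locale en codimension~$2$'' is misleading: since ${\goth p}$ is a \emph{minimal} prime of $R^+/\varpi$, it has height~$1$ in the two-dimensional ring $R^+$, so the localization $(R^+)_{\goth p}$ is one-dimensional. This is exactly what makes the DVR argument available and why the ``integral theory of {\paskunas}'' you anticipate is not needed here --- the only integral inputs are the elementary facts that $I^+$ is $\O_L$-flat and that ${\bf V}(I^+)$ is finitely generated over $R^+$, both of which you already record in your second paragraph.
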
 
\begin{proof}
Les longueurs ne changent pas par compl\'etion. Notons donc $Z$ le compl\'et\'e
de $(Z_{\cal B}^\delta)_{\goth p}$ pour la topologie ${\goth p}$-adique, $R$ celui
de $(R_{M',{\cal B}}^{[a,b],+})_{\goth p}$, et $V$ celui de ${\bf V}(I(\sigma)_{\cal B})$.

Alors $R/\varpi$ est un corps local de
dimension~$1$ -- c'est le corps des fractions de $(R_{M',{\cal B}}^{[a,b],+}/\varpi)/{\goth p}$ --
et donc $R$ est un anneau de valuation discrete dont $R[\frac{1}{\varpi}]$ est le corps des fractions.
Comme ${\bf V}(I(\sigma)_{\cal B})$ est sans $\varpi$-torsion puisque $I(\sigma)_{\cal B}$ 
l'est (cor.\,\ref{petit11}), on en d\'eduit que $V$ est un module libre sur $R$, de rang $2$ puisque
$\rho_{M',{\cal B}}^{[a,b]}=L\otimes_{\O_L}I(\sigma)_{\cal B}$ est libre de rang~$2$ sur $R_{M',{\cal B}}^{[a,b]}$.
Il s'ensuit que $V/\varpi$ est libre de rang~$2$ sur $R/\varpi$; d'o\`u le r\'esultat.
\end{proof}

\end{document}